\def\bbR{\mathbb{R}}
\DeclareMathOperator*{\argmin}{arg\,min}
\newcommand{\T}{{\mathcal T}}
\DeclarePairedDelimiter{\ceil}{\lceil}{\rceil}
\begin{document}

\title{Nonparametric Inference under B-bits Quantization}

\author{\name Kexuan Li \email kexuan.li.77@gmail.com\\
       \addr Global Analytics and Data Sciences\\
       Biogen Inc\\
       Cambridge, MA  02142 USA
       \AND
	   \name Ruiqi Liu \email ruiqliu@ttu.edu\\
       \addr Department of Mathematics and Statistics\\
       Texas Tech University\\
       Lubbock, TX 79409, USA
       \AND
       \name Ganggang Xu \email gangxu@bus.miami.edu \\
       \addr Department of Management Science\\
        University of Miami\\
       Coral Gables, FL 33146, USA
       \AND
       \name Zuofeng Shang \email zshang@njit.edu \\
       \addr Department of Mathematical Sciences\\
       New Jersey Institute of Technology\\
       Newark, NJ 07102, USA
       }

\editor{Rina Barber}

\maketitle

\begin{abstract}
Statistical inference based on lossy or incomplete samples is often needed in research areas such as signal/image processing, medical image storage, remote sensing, signal transmission.  In this paper, we propose a nonparametric testing procedure based on samples quantized to $B$ bits through a computationally efficient algorithm. Under mild technical conditions, we establish the asymptotic properties of the proposed test statistic and investigate how the testing power changes as $B$ increases. In particular, we show that if $B$ exceeds a certain threshold, the proposed nonparametric testing procedure achieves the classical minimax rate of testing \citep{SC15} for spline models. We further extend our theoretical investigations to a nonparametric linearity test and an adaptive nonparametric test, expanding the applicability of the proposed methods. Extensive simulation studies {together with a real-data analysis} are used to demonstrate the validity and effectiveness of the proposed tests.
\end{abstract}

\begin{keywords}B-bits Quantization,   Minimax Rates of Testing, Nonparametric Inference, Smoothing Splines
\end{keywords}

\section{Introduction}
Lossy or incomplete data are commonly encountered in research areas such as machine learning, information theory, and signal processing. To store and process signals in digital devices, \textit{quantization} is a popular procedure that maps the original measurements from a large (often uncountably infinite) set
to a set of possible values. The resulting values are referred to as the \textit{quantized samples}. With the increasing availability of data, it is of great interest to quantify how the data analysis can be affected when
the data are quantized due to storage or communication budget constraint, and how to design quantization schemes to minimize the efficiency loss.  Statistical inference based on quantized samples is challenging because, in addition to the measurement errors, one also needs to account for the information loss due to the quantization errors. In particular, commonly used standard statistical procedures may not be valid when applied to quantized samples if the quantization errors are ignored.

The research on lossy data has attracted increasing attention recently. The first line of works focuses on $b$-bit compressive sensing, which aims at reconstructing a sparse signal from a sequence of $b$-bit quantized outcomes. A $1$-bit compressive sensing model was proposed by \cite{BB08}, and several efficient and provable algorithms have been developed; see, e.g., \cite{GNR10,  GNJN13, PV13, ZYJ14, ZG15}. A signal recovery algorithm {was} proposed in \cite{SL15}, which {extended} the $1$-bit compressive sensing model to a $b$-bit compressive sensing model. The second line of research related to the lossy data is to develop statistical methods based on quantized observations. For example, \cite{lee2001interval} studied the interval estimation of a normal mean process from rounded data, which was further extended to more general likelihood-based statistical estimation problems \citep{vardeman2005likelihood} and nonparametric regression problems \citep{BR06}.  Recently, an increasing number of works aim to quantify the impact of quantization on the statistical properties of the resulting estimators.  For example, \cite{zhang2013information} established lower bounds on the minimax risks for distributed estimation of parametric models under a communication budget constraint. \cite{suresh2017distributed} proposed communication efficient
algorithms for distributed mean estimation without probabilistic assumptions on the data.
 A version of Pinsker's theorem under some storage or communication constraints was developed in \cite{ZL14}, and it was further applied to analyze the convergence rate of nonparametric estimation with {a limited bits budget} by \cite{ZL17}. More recently, a series of works have emerged in investigating the high-dimensional and/or nonparametric regression model estimation in the distributed learning framework with bits constraints, e.g., see \cite{zhu2018distributed, han2018distributed, szabo2020adaptive, cai2021distributed}.

{Despite the abundant existing literature on statistical modeling of quantized data},  research focusing on the nonparametric inference based on quantized data is still lacking.  This paper aims to fill this gap by proposing {a new quantization scheme with a $B$-bits storage or communication budget such that nonparametric estimation and testing based on quantized samples are still valid.} Specifically, we consider the following regression model
\begin{equation}\label{regression:model}
y_i=g_0(i/n)+\sigma\epsilon_i,\,\,i=1,\ldots,n,
\end{equation}
where $g_0(\cdot)$ is a smooth function, $\epsilon_i$'s are \textit{iid} zero-mean errors with  an unit variance, and $\sigma>0$ is an unknown constant. The goal is to (a) estimate $g_0(\cdot)$, and (b)  test the following hypothesis
\begin{eqnarray}\label{H0}
H_0:\,\,\textrm{ $g_0(x)=g_*(x)$ for all $x\in [0,1]$},
\end{eqnarray}
where $g_*(\cdot)$ is a pre-specified deterministic function. 

The above model has been extensively studied in the literature, see, e.g., \cite{SC17}, and is closely related to the well-known Gaussian sequence model and Gaussian white noise model \citep{Tsybakov08nonpara}. However, unlike existing literature,  we consider the case in which the original data, denoted by $y_1,\cdots, y_n$, are generated in machine M, and are quantized as soon as they are generated. The quantized data are then stored in a machine M or  transmitted to another machine M$^*$ for future statistical inferences. We assume that only $B$-bits budget are available for data storage or communication, rendering the necessity for data quantization that may invalidate existing estimation and inference methods. Such a research problem is important for applications where data generation and analysis are carried out at different locations. For example, testing $H_0: g_0(x)=0$ reveals whether the transmitted quantized signals through satellite are pure noises. If $g_*(\cdot)$ is the signal-process from a normally functioning machine, testing (\ref{H0}) using only quantized samples enables us to remotely monitor whether the machine is working properly in real-time.

To meet the $B$-bits requirement, we propose a two-stage quantization procedure: in the first stage we quantize an individual $y_i$ as $Q(y_i)$ with $Q(\cdot)$ being a quantizer, and in the second stage we overwrite these quantized observations by their local averages. See Figure~\ref{figure:diagram} and Algorithm~\ref{alg:distributed} for details. As a result, we obtain a quantized sample of size $c$ for some $c<n$ to be stored or transmitted. We demonstrate that with a carefully chosen $c$ and a well-designed quantizer, the proposed nonparametric estimation and testing procedures are asymptotically valid and efficient even based only on the quantized data.

Our contributions can be summarized as follows. Firstly, we propose a computationally efficient data quantization algorithm to reduce the size of the raw data to meet the $B$-bits constraint, {and at the same time reduce the computational complexity from $O(n^3)$ to $O(c^3)$.} Secondly, we establish sufficient conditions on the bits constraint, i.e., $B$,  that warrants the minimax convergence rate for the resulting spline estimators and {the minimax rates of testing for the proposed testing procedure. In particular, our results show how the asymptotic power of the proposed testing procedure changes as the bits constraint $B$ increases.  Thirdly, we further extend our theoretical investigations to (a) a nonparametric linearity test of the underlying function; (b)  an adaptive nonparametric test when the smoothness of the underlying function is unknown.} To the best of our knowledge, our work is the first to provide a  theoretical investigation on nonparametric inference based on quantized samples.

The rest of the paper is organized as follows. Section~\ref{sec:method} describes the general methodologies we proposed for data quantization, nonparametric estimation, and nonparametric testing using splines. In Section~\ref{sec:asymptotics}, we investigate the theoretical properties of the spline estimator and the nonparametric test statistic based on quantized samples. In Section~\ref{sec:ext}, we study asymptotic properties of the nonparametric linearity test statistic and the adaptive nonparametric test statistic under B-bits constraint. Section~\ref{sec:simulation} gives several simulation studies to evaluate finite sample performances of the proposed methods and Section~\ref{real data} illustrates an application of the proposed methods to the Combined Cycle Power Plant Data.

 \textbf{Notation:} Let $\|\cdot\|$ represent the $L^2$-norm, i.e., $\|f\|^2=\int_0^1 f^2(t)dt$, and define $\|\cdot\|_2$ as the Euclidean Norm of vectors. Let $\|\cdot\|_{\sup}$ denote the supreme norm of a function, i.e., $\|f\|_{\sup}=\sup_{t\in[0,1]}|f(t)|$. For two positive sequences $a_n$ and $b_n$, we denote $a_n\gtrsim b_n$ ($a_n\lesssim b_n$) if there exists a constant $C>0$ such that $a_n\ge C b_n$ ($a_n\le C b_n$) for all $n\ge 1$; denote $a_n\asymp b_n$ if $a_n\gtrsim b_n$ and $a_n\lesssim b_n$; denote $a_n\ll b_n$ if $a_n/b_n\to 0$ as $n\to\infty$ and $a_n\gg b_n$ if $a_n/b_n\to \infty$ as $n\to\infty$.

\section{Methodology}
\label{sec:method}
In this section, we first review some background of the classical smoothing spline regression and then give details on the proposed quantization scheme, nonparametric estimation and testing procedures.
\subsection{Review of Classical Smoothing Spline Regression}
Throughout this paper, we assume that  the underlying true function  $g_0(\cdot)$ belongs to
the $m$-order ($m\geq 1$) periodic Sobolev space  on $\mathbb{I}:=[0, 1]$ defined as
\[
S^m(\mathbb{I})=\left\{\sum_{\nu=1}^\infty \beta_\nu\varphi_\nu(\cdot): \sum_{\nu=1}^\infty \beta_\nu^2\gamma_\nu<\infty\right\},
\]
where
$\varphi_{2k-1}(x)=\sqrt{2}\cos(2\pi kx),\,\,\,\,
\varphi_{2k}(x)=\sqrt{2}\sin(2\pi kx)$ are the trigonometric basis functions, and
$\gamma_{2k-1}=\gamma_{2k}=(2\pi k)^{2m}$  for $x\in\mathbb{I}$ and $k\ge1$. It follows from \cite{W90} and \cite{G13} that $S^m(\mathbb{I})$ is a reproducing kernel Hilbert space (RKHS) endowed with an inner product $J(f,g)=\int_0^1 f^{(m)}(x)g^{(m)}(x)dx$ and a reproducing kernel
\begin{equation*}
K(x, y)=\frac{(-1)^{m-1}}{(2m)!}B_{2m}(|x-y|),
\end{equation*}
where $B_{2m}$ is the Bernoulli polynomial of order $2m$.

Based on the above assumptions on $g_0(\cdot)$, the classic smoothing spline (ss) estimator of
$g_0(\cdot)$ is obtained through the following optimization problem:
\begin{align}
	\widehat{g}^{\textrm{ss}}
&\equiv\argmin_{g\in S^{m}}\frac{1}{n}\sum_{i=1}^n(y_i-g(i/n))^2+\lambda\int_0^1 [g^{(m)}(x)]^2dx.\label{eq:ss:optimization}
\end{align}
For $x\in\mathbb{I}$, we can define a function $K_{x}(\cdot)=K(x,\cdot)$, which belongs to $S^m(\mathbb{I})$.  By the representer Theorem \citep{G13}, the solution to (\ref{eq:ss:optimization}) has the following closed-form
\begin{equation}\label{ss:est:expression}
\widehat{g}^{\textrm{ss}}=\sum_{i=1}^n\theta_i K_{i/n},
\end{equation}
where $\theta=(\theta_1,\ldots,\theta_n)^T=n^{-1}(\Sigma_n+\lambda I_n)^{-1}y$
with  $\Sigma_n=[K(i/n,i'/n)/n]_{1\le i,i'\le n}\in \mathbb{R}^{n\times n}$, $y=(y_1,\ldots,y_n)^T\in \mathbb{R}^n$ and $I_n$ being the $n\times n$ identify matrix.

To conduct hypothesis test for (\ref{H0}), a straightforward idea is to construct a testing statistic based on the distance between $\widehat{g}^{\textrm{ss}}(\cdot)$ and $g_*(\cdot)$. Specifically, we use the $L_2$ norm distance defined as
\begin{eqnarray*}
T^{\textrm{ss}}=\|\widehat{g}^{ss}-g_*\|^2.
\end{eqnarray*}
With an appropriate normalization, it can be shown that $T^{\textrm{ss}}$ is asymptotically normally distributed \citep{SC17,pmlr-v125-yang20a,LSC18}.


\subsection{Two-Stage Quantization}
\label{sec:quan}
The original observations $y_i$'s in (\ref{regression:model}) are real-valued random variables, each of which literally requires an infinite amount of bits to store or transmit. When there are only $B$ available bits, the original observations $y_i$'s may not be directly accessible for estimation or testing, and hence, the classical smoothing spline estimator given in (\ref{ss:est:expression}) is not applicable. This section aims to introduce a two-stage quantization scheme to transform $y_i$'s into the ones whose storage or transmission meets the $B$-bits constraint.
The resulting samples will be further used for optimal inferential purposes in the subsequent sections. The two-stage quantization process is demonstrated in the following Figure~\ref{figure:diagram}.

\begin{figure}[ht!]
  \centering
  \includegraphics[scale=0.7]{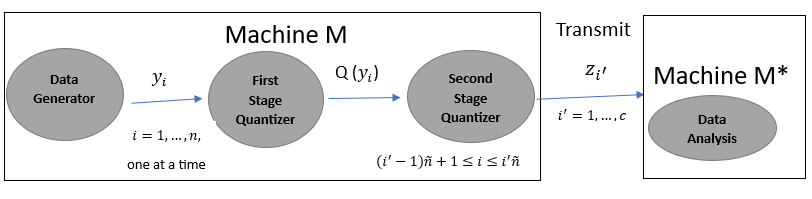}
  \caption{\it Two-stage quantization process.}
  \label{figure:diagram}
\end{figure}

The first-stage quantization is to quantize the data $y_i$'s as soon as they are generated with at most $k$ distinct values. 
For convenience, we use a uniform quantization scheme as follows. We first choose an interval $[t_1,t_{k-1}]$ and choose $t_2<\ldots <t_{k-2}$ as the {equally spaced grid points within} $[t_1, t_{k-1}]$. Denote $t=(t_1,\ldots,t_{k-1})^T\in \mathbb{R}^{k-1}$ and the sub-interval length $C_k(t):=(t_{k-1}-t_1)/(k-2)$. For ease of presentation, we assume that $l_1=t_1/C_k(t)$ is an integer. Define a quantizer $Q(\cdot)$ as follows:
\begin{eqnarray}
    \label{eq:quant}
\textrm{First-stage quantization:}\,\,&&Q(y)=\sum_{j=1}^k\mu_j I[y\in R_j(t)], \text{ with }\mu_j=(l_1+j-1)C_k(t),\nonumber\\
&&\text{ for any } y\in\mathbb{R},
\end{eqnarray}
where $\mu=(\mu_1, \ldots, \mu_k)^T\in \mathbb{R}^k$ consists of the quantized values and $R_1(t)=(-\infty,t_1],R_2(t)=(t_1,t_2],\ldots,R_{k-1}(t)=(t_{k-2},t_{k-1}]$,
$R_k(t)=(t_{k-1},\infty)$ are the corresponding quantized intervals. Clearly, the $R_j(t)$'s form a partition of the real line with assigned marks $\mu_j$'s and $Q$ maps each $y\in\mathbb{R}$ to one of the $k$ marks.
Applying $Q$ to $y_i$'s, we generate $n$ quantized samples $Q({y}_1),\cdots,Q({y}_n)$, each of which takes at most $k$ distinct values.
Storage or transmission of $Q(y_i)$'s thus requires $n\log_2{k}$ bits which might still go beyond the $B$-bits budget when $B=o(n)$.
For this reason, we propose the following second-stage quantization to further reduce the storage or transmission bits through locally averaging the $Q(y_i)$'s. 

The second-stage quantization is to further reduce the number of storage or transmission bits via local average. Specifically, we divide the interval $\mathbb{I}=[0,1]$ into $c$ equally-spaced sub-intervals for some $c\le B/\ceil{\log_2(k+2)}$. For simplicity, we assume that $\widetilde{n}\coloneqq n/c$ is an integer and each sub-interval contains  $\widetilde{n} $ observations. The quantized data from the first-stage quantization, i.e., $Q(y_i)$'s, are further quantized as $z_i=\widetilde l_i C_k(t)$ such that
\begin{equation}
\label{eq:quant:2}
\textrm{Second-stage quantization:}\,\,\,\,\left|z_i - \frac{1}{\widetilde{n}}\sum_{j=(i-1)\widetilde{n}+1}^{i\widetilde{n}}Q(y_j)\right|\le C_k(t),\,\, \textrm{ for $i=1,\ldots,c$.}
\end{equation}
Details of the two-stage quantization algorithm are provided in the following Algorithm~\ref{alg:distributed}.

\begin{algorithm}[h!]
\SetAlgoLined
\textbf{Initialization:} set $\widetilde l_1=\cdots=\widetilde l_c=0$;\\
\KwIn{Data $y_1,\cdots, y_n$ that generated sequentially;}
\textbf{while} $1\le i\le c$ \textbf{do} 
\begin{enumerate}
\item[] Set ${\rm tmp}=0$;
\item[]\textbf{while}  $(i-1)\widetilde{n}+1\le j\le i\widetilde{n}$  \textbf{do}
\begin{enumerate}
    \item First-stage quantization (quantize on the spot): $Q(y_j)=l_j C_k(t)$; 
    \item  Second-stage quantization (averaging): \\
\begin{enumerate}
    \item {\it update} $\widetilde l_i=\widetilde l_i+\text{Integer Part of } l_j/\widetilde{n}$;
    \item   \textbf{if} ${\rm tmp}+\text{sign}(l_j)(|l_j| \text{ mod } \widetilde{n})\ge \widetilde n $ \textbf{ then}  \\\hspace{2em}{\it update} $\begin{cases}
     \widetilde l_i:=\widetilde l_i+1\\   
     {\rm tmp}:={\rm tmp}+\text{sign}(l_j)(|l_j|\text{ mod } \widetilde{n})-\widetilde{n};
    \end{cases}$ 
    \item[] \textbf{else if} ${\rm tmp}+\text{sign}(l_j)(|l_j| \text{ mod } \widetilde{n})\le -\widetilde n $
\textbf{ then}  \\
\hspace{2em}{\it update} $\begin{cases}
     \widetilde l_i:=\widetilde l_i-1\\   
     {\rm tmp}:={\rm tmp}+\text{sign}(l_j)(|l_j| \text{ mod } \widetilde{n})+\widetilde n;
    \end{cases}$
\item[]    
 \textbf{else}   \\\hspace{2em}{\it update} ${\rm tmp}:={\rm tmp}+\text{sign}(l_j)(|l_j|\text{ mod } \widetilde{n})$;\\
    \textbf{end if}  
\end{enumerate}

    \end{enumerate}
    \textbf{end while}
\end{enumerate}
\textbf{end while}

\KwOut{quantized data: $z_1=\widetilde l_1 C_k(t),\cdots,z_c=\widetilde l_c C_k(t)$.}
 \caption{Two-Stage Quantization}
 \label{alg:distributed}
\end{algorithm}

Based on the definition of the quantizer $Q(\cdot)$ in (\ref{eq:quant}), $z_i$ in~\eqref{eq:quant:2} must belong to the interval $[\mu_1-C_k(t), \mu_k+C_k(t)]$ and must be of the form $l C_k(t)$ for some integer $l$. Therefore, there are at most $k+2$ distinct values of $z_i$'s, namely, $l C_k(t)$, for $l=l_1-1, l_1,  l_1+1,\cdots,  (l_1+k-1),l_1+k$. As a result, each $z_i$ requires $b=\ceil{\log_2(k+2)}$ bits to store or transmit,
hence, the entire $z_i$'s require $c\ceil{\log_2(k+2)}\le B$ bits, where  $\ceil{x}$ is the smallest integer greater than $x$.  In the subsequent sections, we will show that, with $c,k$ being properly selected, 
optimal inferences based on $z_i$'s are possible even under $B=o(n)$, comparing to other regression literature which typically needs $B\gtrsim n$ (see \cite{SL16}). For optimal inferences in non-regression settings such as Gaussian sequence model or Gaussian white-noise model, similar findings were made by \cite{cai2021distributed}.



\subsection{$B$-bits Nonparametric Spline Estimation}
\label{sec:est}
Given $B$, let us choose $k,c$ such that $c\ceil{\log_2(k+2)}=B$, i.e.,
our two-stage quantization maximizes the use of the available bits. Based on the quantized samples $z_1,\ldots, z_c$ from Algorithm~\ref{alg:distributed}, a $B$-bits constrained spline estimator is proposed as follows
\begin{align}
\widehat g^{\textrm{B}}_{\mu,t,c}
&\equiv\argmin_{g\in S^m(\mathbb{I})}\frac{1}{c}\sum_{i=1}^c(z_i-g(i/c))^2+\lambda\int_0^1 [g^{(m)}(x)]^2dx. \label{class:smoothing:spline:quantized}
\end{align}
Similar to (\ref{ss:est:expression}), the resulting spline estimator $\widehat{g}^{\textrm{B}}_{\mu,t,c}(\cdot)$
has an explicit expression
\begin{equation*}
\widehat{g}^{\textrm{B}}_{\mu,t,c}=\sum_{i=1}^c\widehat{\theta}_i K_{i/c},
\end{equation*}
where $(\widehat{\theta}_1,\ldots,\widehat{\theta}_c)^T=c^{-1}(\Sigma_c+\lambda I_c)^{-1}z$ with $\Sigma_c=[K(i/c,i'/c)/c]_{1\le i,i'\le c}\in \mathbb{R}^{c\times c}$, $z=(z_1,\ldots,z_c)^T\in \mathbb{R}^c$, and $I_c$ being the $c\times c $ identity matrix.

Notice that the optimization of (\ref{class:smoothing:spline:quantized}) only requires on $c$ quantized observations and the solution only involves computing the inverse of a $c\times c$ matrix $\Sigma_c+\lambda I_c$, which is much less computationally intensive compared to the classical smoothing spline estimator (\ref{ss:est:expression}).  

Finally, the selection of the tuning parameter $\lambda$ is crucial, and can be obtained by minimizing the generalized cross validation (GCV) score as follows
\begin{equation} 
\label{gcv}
\widehat{\lambda}= \argmin_{\lambda>0}\frac{c\|[I_c-\Sigma_c(\Sigma_c+\lambda I_c)^{-1})]z\|_2^2}{[c-\textrm{trace}(\Sigma_c(\Sigma_c+\lambda I_c)^{-1})]^2},
\end{equation}
The GCV has been widely used in the literature and enjoys appealing theoretical properties in various settings, see, e.g., \cite{W90,xu2012asymptotic,G13,xu2018optimal,xu2019distributed}.

\subsection{$B$-bits Nonparametric Testing}\label{sec:test}
In this section, we propose a  test statistic for the null hypothesis~\eqref{H0} based on the $B$-bits spline estimator $\widehat{g}_{\mu,t,c}^{\textrm{B}}(\cdot)$. Without loss of generality, we assume $g_*(\cdot)\equiv 0$ in the null hypothesis~\eqref{H0}. For a nonzero $g_*(\cdot)$, the observed response variables $y_i$'s from model~\eqref{regression:model} can be centered as $y_i^*=y_i-g_*(i/n)$, and the same testing procedure can be applied using $y_i^*$'s instead. To test $H_0: g_0(x)=0$, we consider  test statistic based on the $L_2$ norm distance between $\widehat{g}_{\mu,t,c}^{\textrm{B}}(\cdot)$ and $g_*(\cdot)\equiv 0$ as following
\begin{equation}\label{L2T}
T_{\mu,t,c}=\|\widehat{g}_{\mu,t,c}^{\textrm{B}}\|^2.
\end{equation}
Intuitively, a large value of $T_{\mu,t,c}$ should lead to the rejection of $H_0$. In Theorem~\ref{asymp:distribution}, we shall show that under $H_0$ and mild conditions, it holds that
\[
\frac{cT_{\mu,t,c}-\textrm{trace}(A){\tau}_k^2}{s_c{\tau}_k^2}\overset{d}{\longrightarrow}N(0,1)
\,\,\textrm{as $n, c\rightarrow\infty$,}
\]
where $\tau_k^2=\textrm{Var}(z_1|H_0)$, $A=(\Sigma_c+\lambda I_c)^{-1}\Omega_c(\Sigma_c+\lambda I_c)^{-1}$, $\Omega_c=[K^{\otimes 2}(i/c,i'/c)/c]_{1\le i,i'\le c}$ with {$K^{\otimes 2}(x, x'):= \int_0^1K(x,y)K(y,x')dy$} and  $s_c^2=2\sum_{1\le i\neq i'\le c}a_{i,i'}^2$  with
$a_{i,i'}$ being the $(i,i')$th entry of $A$. In practice, $\tau_k^2$ needs to be estimated based on the quantized data as well. We proposed the following estimator
\[
\widehat{\tau}_k^2= \frac{\widetilde\tau_n^2}{2\widetilde{n}(n-1)},
\]
where $\widetilde\tau_n$ is given in the following Algorithm~\ref{alg:distributed-1} through quantization.
 Intuitively, the above estimator is a re-scaled (by a factor of $\widetilde n^{-1}$) version of the quantized sample error variance $\frac{1}{2(n-1)}\sum_{j=2}^n\left\{Q(y_j)-Q(y_{j-1})\right\}^2$.  It is straightforward to shown that $\widehat{\tau}_k^2=\tau_k^2[1+o_p(1)]$ under mild conditions, see Lemma~\ref{hat_tau_k} of Appendix C for details. Consequently, the decision rule for testing (\ref{H0}) at significance level $\alpha$ can be defined as follows
\begin{equation}\label{eq:test:statistic}
\phi_{c,k}=I(|cT_{\mu,t,c}-\textrm{trace}(A)\widehat{\tau}_k^2|\ge z_{1-\alpha/2}s_c\widehat{\tau}_k^2),
\end{equation}
where $z_{1-\alpha/2}$ is the $(1-\alpha/2)$-percentile of the standard normal distribution.
We reject the null hypothesis (\ref{H0}) if and only if $\phi_{c,k}=1$.

\begin{algorithm}[h!]
\SetAlgoLined
\textbf{Initialization:} set $\widetilde\tau_n^2={\rm tmp}=0$;\\
\KwIn{Data $y_1,\cdots, y_n$ that generated sequentially;}
\textbf{while} $1\le j\leq n$ \textbf{do}
\begin{enumerate}
    \item {Quantize on the spot } $Q(y_j)$;  
    \item \textbf{if } $j=1$  \textbf{ then} {\it update} ${\rm tmp}:=Q(y_1)$; \textbf{ else } 
    \begin{enumerate}
        \item {\it update} $\widetilde\tau_n^2:={\widetilde\tau_n^2+(Q(y_j)-{\rm tmp})^2}$;
        \item {\it update} ${\rm tmp}:=Q(y_j)$;
    \end{enumerate}
    \textbf{end if}
\end{enumerate}
\textbf{end while}

\KwOut{$\widetilde\tau_n^2$.}
 \caption{Quantization Estimation of Variance}
 \label{alg:distributed-1}
\end{algorithm}
By the design of the quantizer $Q(\cdot)$ in (\ref{eq:quant}), we can see that there are at most $k+1$ distinct possible values for each $\left\{Q(y_{j+1})-Q(y_j)\right\}^2$ ranging from $0C_k(t)^2$ to $k^2C_k(t)^2$, $1\le j\le n-1$, yielding the range for $\widetilde\tau^2$ as $[0, (n-1)k^2C_k(t)^2]$. Since $\widetilde\tau^2$ can only take values as $lC_k(t)^2$ for some integer $l$, there are at most $(n-1)k^2+1$ distinct values for $\widetilde\tau^2$, which would cost $\ceil{\log_2\left\{(n-1)k^2+1\right\}}$ bits to store or transmit. Compare to the bit costs for the two-stage quantization $c\ceil{\log_2(k)}\approx B$, the cost to store or transmit $\widetilde\tau^2$ is negligible when $c\to\infty$, hence is ignored in the calculation of the total bit costs for ease of presentation.

\subsection{Practical Choice of $c$ and $k$ Given $B$} \label{sec:select_c_k}
The implementations of Algorithms~\ref{alg:distributed} and~\ref{alg:distributed-1} require a practical choice of $c$ and $k$ for a given bits budget $B$. Based on the discussion in Section~\ref{sec:quan}, Algorithm~\ref{alg:distributed} requires $B=cb$ with $b=\ceil{\log_2\left(k\right)}$. Our theoretical investigations in Section~\ref{sec:power:fixm} require that $b\gg\log_2\left(\sqrt{(nh^{1/2}+n(ch)^{-1})\T_n}\right)$ for some $h\to 0$, and $ch\to\infty$ as $c,n\to\infty$, where $\T_n$ is defined in $\textrm{Condition (\textbf{B})}$. Furthermore, equations~\eqref{rate1} and \eqref{rate2} in Section~\ref{sec:power:fixm} reveal that the optimal choice of $b$ depends on the smoothness of the periodic Sobolev space (i.e., $m$) and the tuning parameter $\lambda$. While the former is typically unknown in practice, the latter needs to be chosen by some data-driven criterion such as GCV based on the quantized data, which is not available until the quantization process is carried out. To simplify the calculation and make the quantization algorithm more practical, we propose to use $b=\ceil{\log_2\left(\sqrt{n\T_n/\sigma^2}\right)}$, which is a valid choice for any $m$ and $h$, and therefore is easy to use in practice. Specifically, given $B$, we find $c$ and $k$ as follows
\begin{equation}
\label{ck}
\begin{split}
      &c^{\dag}=\max\left\{c\in \mathbbm{Z}: c\log_2\left( \sqrt{n\T_n/\sigma^2}\right)\le B\right\}, \\& k^{\dag}=\max\left\{k\in \mathbbm{Z}: c^{\dag}\ceil{\log_2(k)}\le B\right\}.  
\end{split}
\end{equation}

By the definition in $\textrm{Condition (\textbf{B})}$, $\sqrt{\T_n}$ is the quantization range, and $\sigma^2$ is used in the choice of $b$ so that $\T_n/\sigma^2$ is invariant if $y_i$'s are multiplied by  a constant. Under $\textrm{Condition (\textbf{B})}$, the actual choice of $\T_n/\sigma^2$ depends on the distribution of $\epsilon_i$'s in model~\eqref{regression:model}. If $\epsilon_i$'s follow a standard Gaussian distribution, it suffices to take $\T_n/\sigma^2=2.5\log(n)$. Therefore, $\sigma^2$ in~\eqref{ck} does not need to be estimated. See more discussion under $\textrm{Condition (\textbf{B})}$ regarding the choice of $\T_n/\sigma^2$.

\section{Asymptotic Theory}\label{sec:asymptotics}
We now proceed to study asymptotic properties of the $B$-bits spline estimator and the nonparametric test statistic. In this section, we restrict our investigation to the simple case scenario when the order $m$ of the periodic Sobolev space is known and fixed, and the exact form of function~$g_*(\cdot)$ in the null hypothesis~\eqref{H0} is also known. We shall defer theoretical results on more general cases to Section~\ref{sec:ext}.

\subsection{Estimation Convergence Rate}
\label{sec:est:rate}
{We first quantify the convergence rate of $\|\widehat{g}_{\mu,t,c}^{\textrm{B}}-g_0\|^2$.  Even though the main focus of this paper is conducting statistical inference based on quantized samples, it is still of interest to study the asymptotic properties of the spline estimator $\widehat{g}_{\mu,t,c}^{\textrm{B}}(\cdot)$.} 
Define the Sobolev constant
\begin{equation}\label{Sob:const}
c_s\equiv\sup_{g\in S^m(\mathbb{I})} \frac{\|g\|_{\sup}}{\sqrt{J(g,g)}}.
\end{equation}
It is known that $c_s$ is positive finite see \citep{adams2003sobolev}.

For all our theoretical investigations,  we assume that
$\mu_j$'s and $t_j$'s satisfy the following 
\textit{boundedness} condition

\begin{align*}
\textrm{Condition (\textbf{B})}: &\textrm{ Assume that $J(g_0,g_0)\le \rho^2$, and denote } \T_n=min\{t_1^2,t_{k-1}^2\}, \textrm{ it holds that,} \\
&\hspace{4em}\mu_j^2P\left(|\sigma\epsilon_1|+c_s\rho>\sqrt{\T_n}\right)=o\left(\frac{1}{n}\right) \textrm{ for } j=1,k.
\end{align*}

Condition (\textbf{B}) asserts that {the values of $\mu_1, \mu_k$ can not be to large}, and that $t_1,t_k$ should be sufficiently large. Recall that in this paper, we adopt the uniform quantization scheme for which Condition (\textbf{B}) is rather mild.  Since $J(g_0,g_0)\le \rho^2$, by the definition of $c_s$ in~\eqref{Sob:const}, we have that $\|g_0\|_{\sup}\le c_s\rho$, and we shall assume that $\rho$ is finite for our theoretical investigation. Condition (\textbf{B}) essentially assumes that $\T_n$ is sufficiently large so that all observed $y_i$'s fall within the quantization range with a high probability. When $\epsilon_i$'s follow a sub-Gaussian distribution, it suffices to take $\T_n\asymp\log{(n)}$ for Condition (\textbf{B}) to hold. For distributions with heavier tails, the required order for $\min\{t_1^2,t_{k-1}^2\}$ will be larger, e.g., $\T_n\asymp [\log(n)]^2$ for sub-Exponential  distributions. In particular, when $\epsilon_i$'s follow a normal distribution, it suffices to use $\T_n=2.5\sigma^2\log(n)$.

Based on Condition (\textbf{B}), the following theorem establishes an asymptotic upper bound for
the estimation error $E\|\widehat{g}_{\mu,t,c}^{\textrm{B}}-g_0\|^2$.


    

\begin{theorem}\label{upper:bound:hat:bb:f}
If Condition (\textbf{B}) holds, then it follows that
\begin{equation*}
E\|\widehat{g}_{\mu,t,c}^{\textrm{B}}-g_0\|^2
\lesssim (nh)^{-1} + \lambda +  c^{-\min\{2m,3\}} + G_{c,k}(t),
\end{equation*}
where $h = \lambda^{\frac{1}{2m}}$, and $G_{c,k}(t)=4C_k(t)^2+G_{c,k,1}(t)+G_{c,k,2}(t)$,
with
\begin{align*}
G_{c,k,1}(t)&=\frac{2}{n}\sum_{i=1}^{n}\int_{-\infty}^{t_1}(z-\mu_1)^2p\left(\frac{z-g_0(i/n)}{\sigma}\right)\sigma^{-1}dz,\\
G_{c,k,2}(t)&=\frac{2}{n}\sum_{i=1}^{n}\int_{t_{k-1}}^{\infty}(z-\mu_k)^2p\left(\frac{z-g_0(i/n)}{\sigma}\right)\sigma^{-1}dz,
\end{align*}
with $p(\cdot)$ being the distribution of $\epsilon$.
\end{theorem}

The asymptotic error bound for $\widehat{g}_{\mu,t,c}^{\textrm{B}}(\cdot)$ given in Theorem~\ref{upper:bound:hat:bb:f} can be roughly categorized into three parts:
(1) the estimation error of the smoothing spline estimator based on fully observed original data, i.e., $(nh)^{-1}+\lambda$ \citep{W90}; (2) the estimation error attributed to first-stage quantization, i.e., $G_{c,k}(t)$; and (3) the estimation bias introduced by second-stage quantization, i.e., $ c^{-\min\{2m,3\}}$. An extreme case is when
$t_1\to -\infty$, $t_{k-1}\to\infty$ and $C_k(t)\to 0$, i.e., the first-stage quantizer becomes dense enough, in
which case $G_{c,k}$ tends to zero, reducing to the classical nonparametric estimation setting.

Intuitively, if a sufficiently large bits budget $B$, and consequently sufficiently large values $c$ and $k$ can be used, term $(nh)^{-1} + \lambda$ will dominate the upper bound of $E\|\widehat{g}_{\mu,t,c}^{\textrm{B}}-g_0\|^2$. As a result, the convergence rate of $\|\widehat{g}_{\mu,t,c}^{\textrm{B}}-g_0\|^2$ coincides with that of the classical smoothing spline estimator based on original observations without quantization \citep{W90}. A sufficient condition is given in the following corollary.



\begin{corollary} \label{coro2}
 Assume that Condition (\textbf{B}) holds, and that (1) $C_k(t)^2 \lesssim n^{-2m/(2m+1)}$; (2) as $T\rightarrow\infty$, $p(z)$ satisfies $\int_{|z|\ge T} z^2p(z)dz=O(\exp(-T^{d}))$ where $d \geq \frac{4m}{2m+1}$; (3)  $\T_n \asymp \log(n)$; and that (4) $c\asymp n^{\frac{\max\{1,2m/3\}}{2m+1}}$, $\lambda \asymp n^{-\frac{2m}{2m+1}}$. Then it follows that $E\|\widehat{g}_{\mu,t,c}^{\textrm{B}}-g_0\|^2=O(n^{-\frac{2m}{2m+1}})$, which achieves the optimal convergence rate of smoothing splines without quantization.
\end{corollary}

Recall the definition $C_k(t)=(t_{k-1}-t_1)/(k-2)$, under conditions of Corollary~\ref{coro2}, the minimum order of $k$ to achieve the optimal convergence rate is $n^{m/(2m+1)}\log(n)$, leading to a  required $b=\ceil{\log_2(k)}\asymp \frac{m}{2m+1}\log_2(n)$.  Therefore, the total bits budget $B=cb\asymp n^{\frac{\max\{1,2m/3\}}{2m+1}}\log(n)$. Recently, \cite{zhu2018distributed} propose a quantization scheme for the Gaussian sequence model that achieves the same optimal estimation rate with a bits budget $B\asymp n^{\frac{1}{2m+1}}$. Although their bits budget is lower than our proposed method, \cite{zhu2018distributed} achieve this goal by essentially only quantizing the first $n^{\frac{1}{2m+1}}$ Fourier coefficients of the function $g_0(\cdot)$ and discarding the remaining Fourier coefficients as $0$'s. It is unclear how can this approach be extended to making valid nonparametric inferences for  $g_0(\cdot)$, which is the main focus of our work. The proposed quantization scheme in Section~\ref{sec:quan} is in spirit closer to the quantization algorithms proposed in~\cite{SL16} and references therein, although these works are mainly focused on the estimation of the parametric linear regression model. In the following subsections, we shall investigate the impacts of the bits budget on the asymptotic properties of the proposed nonparametric testing procedure.

\subsection{Asymptotic Distribution of the Test Statistic under $H_0$}\label{sec:bbit:testing}
In this section, we proceed to derive the asymptotic distribution of the test statistic $T_{\mu, t, c}$ under $H_0$. From now on, we will use $h=\lambda^{1/(2m)}$ without repeating its definition.
\begin{theorem}\label{asymp:distribution}
Suppose that Condition (\textbf{B}) holds, and it holds that $h\to 0$, $ch\to \infty$, $b\gg\log_2\left(\sqrt{(nh^{1/2}+n(ch)^{-1})\T_n}\right)$ and  $E([\widetilde{n}^{-1}\sum_{j=1}^{\widetilde{n}}Q(\epsilon_j)]^4)=O(c^2n^{-2})$ as $n,c\to\infty$. Then under $H_0$, it follows that
\begin{equation}\label{clt:T:t}
\frac{cT_{\mu,t,c}-\textrm{trace}(A)\widehat{\tau}_k^2}{s_c\widehat{\tau}_k^2}\overset{d}{\longrightarrow}N(0,1),
\,\,\textrm{as $n, c\rightarrow\infty$},
\end{equation}
where $T_{\mu,t,c}$, $A$, $\widehat{\tau}_k^2$ and $s_c$ are as defined in Section~\ref{sec:test}.
\end{theorem}

Theorem \ref{asymp:distribution} states that under some regularity conditions, the null distribution of the nonparametric test statistic $T_{\mu,t, c}$ for $H_0$ in~\eqref{H0} is asymptotically normal. The proof relies on Stein's exchangeable pair method and is given in the Appendix.

 We remark that the conditions in Theorem \ref{asymp:distribution} are rather mild. Specifically, the first condition $h\to 0$ requires the tuning parameter $\lambda$ to shrink to zero and the second condition $ch \to \infty$ implies the number of quantized data, ie., $c$, should be sufficiently large. The only condition that needs more discussion is the last condition $E([\widetilde{n}^{-1}\sum_{j=1}^{\widetilde{n}}Q(\epsilon_j)]^4)=O(c^2n^{-2})$, which involves jointly controlling the moment of $\epsilon_i$'s and the first-stage quantizer $Q(\cdot)$.  Proposition \ref{tau_k^2:lemma} below provides a sufficient condition to for this assumption.

\begin{proposition}\label{tau_k^2:lemma}
Suppose that  Condition (\textbf{B}) holds. If $E(\epsilon_1^4)=O(nc^{-1})$, $C_k(t) = o(1)$ and
$\mu_j^4P(\sigma\epsilon_1\in R_j(t))=O(nc^{-1})$ for $j=1$ and $k$, then it follows that $E([\widetilde{n}^{-1}\sum_{j=1}^{\widetilde{n}}Q(\epsilon_j)]^4)=O(c^2n^{-2})$.
\end{proposition}

Using Theorem~\ref{asymp:distribution} and Proposition~\ref{tau_k^2:lemma}, the validity of the proposed nonparametric testing procedure requires the quantized sample size $c$ to be sufficiently large, in particular, $ch=c\lambda^{1/(2m)}\to\infty$.  Recall that the proposed quantization scheme in Section~\ref{sec:quan} requires a total bits budget $B= cb$ with $b=\ceil{\log_2(k)}$. As a result, for Theorem~\ref{asymp:distribution} to hold, the required bits budget $B\gg c\log_2\left(\sqrt{(nh^{1/2}+n(ch)^{-1})\T_n}\right)$, for which the lower bound is determined by the tuning parameter $\lambda$ (or $h$). In the next subsection, we shall investigate the impacts of $\lambda$ on the asymptotic testing power against local alternatives, which can be used to study optimal asymptotic power achievable with a given bits budget $B$. For example, we shall show that to achieve the minimax rate of testing, one needs $B \gtrsim n^{\frac{3}{4m+1}}\log_2\left(n^{\frac{2m}{4m+1}}\sqrt{\T_n}\right)$.

\subsection{Asymptotic Power of the Nonparametric Test}
\label{sec:power:fixm}
We now proceed to examine the asymptotic power of the proposed nonparametric test. For a fixed constant $\rho>0$, let $S_\rho^m(\mathbb{I})=\{f\in S^m(\mathbb{I}): J(f, f)\le\rho^2\}$ be the $\rho$-ball in the periodic Sobolev space with a radius $\rho$. We consider the following alternative hypothesis
\begin{equation}\label{eq:h1}
H_1: g_0\in S_\rho^m(\mathbb{I})\backslash\{0\}.
\end{equation}

Based on the definition of the quantized data $z_i$ in \eqref{class:smoothing:spline:quantized}, its unquantized counterpart can be defined as $\widetilde{y}_i=\frac{1}{\widetilde n}\sum_{j=(i-1)\widetilde n+1}^{i\widetilde n}y_j$ for $i=1,\cdots,c$. Under $H_1$, one has that $E\widetilde{y}_i=\frac{c}{n}\sum_{j=(i-1)\widetilde{n}+1}^{i\widetilde{n}}g_0(j/n)$ for $i=1,\cdots,c$. To facilitate our theoretical investigation, we introduce the following function

\begin{equation}\label{eq:f0}
f(x)=\frac{1}{2\Delta}\int_{\max(x-\Delta, 0)}^{\min(x+\Delta, 1)}g_0(s)ds, \text{ where } x\in\mathbb{I}, \text{ and }\Delta=\frac{1}{c}.
\end{equation}
It is straightforward to show that $\max_{1\le i\le c}|f(i/c)-E\widetilde{y}_i|=O(n^{-1})$ and that as $\Delta\to 0$, $\sup_{x\in\mathbb{I}}|g_0(x)-f(x)|\to 0$. Theorem \ref{power:thm} below states that, under some regularity conditions,
our proposed nonparametric test can achieve arbitrary high power provided that $H_0$ and $H_1$
are sufficiently separated.
\begin{theorem}\label{power:thm}
Suppose that Condition (\textbf{B}) holds. If it holds that $h\to 0$, $ch\to \infty$, $b\gg\log_2\left(\sqrt{(nh^{1/2}+n(ch)^{-1})\T_n}\right)$, and  $E([\widetilde{n}^{-1}\sum_{j=1}^{\widetilde{n}}Q(\epsilon_j)]^4)=O(c^2n^{-2})$, then for any $\eta>0$, there exists positive constants $C_\eta$ and $N_\eta$  such that
for any $c\ge N_\eta$,
\[
\inf_{\substack{g\in S_\rho^m(\mathbb{I})\\
\|g\|_c\ge C_\eta\delta_{n,c,\lambda}}} P(\textrm{reject $H_0$}|\textrm{$H_1$ is true})\ge1-\eta,
\]
where
$\delta_{n,c,\lambda}=\sqrt{(nh^{1/2})^{-1}+\lambda+(nch^2)^{-1}}$ and $\|g\|_c=\sqrt{\sum_{i=1}^c f^2(i/c)/c}$ with function $f(\cdot)$ as defined in~\eqref{eq:f0}.
\end{theorem}

The separation rate $\delta_{n,c,\lambda}$ represents the smallest rate of deviation from the $H_0$ that can be consistently detected by the proposed test statistic~\eqref{L2T}, given sufficiently large $n$ and $c$. The first part of  $\delta_{n,c,\lambda}$, namely, $(nh^{1/2})^{-1}+\lambda$, coincides with the separation rate of the classical spline-based nonparametric test using original observations without quantization, see, e.g.,  \citet{SC13, CS15, SC15, SC17}. The remaining part of $\delta_{n,c,\lambda}$, namely, $(nch^2)^{-1}$, is an  additional term due to the two-stage quantization errors.   For a given $n$ and $c$, the separation rate $\delta_{n,c,\lambda}$ can be minimized by choosing an appropriate value of the tuning parameter $\lambda$, subject to the constraint $c\lambda^{1/2m} \to\infty$. Specifically, by some straightforward algebra,  one can show that

\begin{equation}
\label{rate1}
  \inf_{\lambda\gg c^{-2m}}\delta_{n,c,\lambda}=\begin{cases}
n^{-\frac{2m}{4m+1}},& \text{ if } c\gtrsim n^{\frac{3}{4m+1}} \text{ with } \lambda\asymp n^{-\frac{4m}{4m+1}};\\
(nc)^{-\frac{m}{2(m+1)}},& \text{ if }  n^{\frac{1}{2m+1}} \ll c\lesssim n^{\frac{3}{4m+1}} \text{ with } \lambda \asymp (nc)^{-\frac{m}{m+1}};\\
\lambda^{1/2},& \text{ if }  c\lesssim n^{\frac{1}{2m+1}} \text{ with } \lambda \gg c^{-2m}.\\
\end{cases}
\end{equation}

Recall that the total bits needed for the proposed quantization scheme in Section~\ref{sec:quan} is $B=cb$, for which Theorem~\ref{power:thm} requires that $h\to 0$ and $b\gg\log_2\left(\sqrt{(nh^{1/2}+n(ch)^{-1})\T_n}\right)$. By plugging the optimal smoothing parameter back to the lower bound of $b$, we have the following

\begin{equation}
\label{rate2}
  \inf_{\lambda\gg c^{-2m}}\delta_{n,c,\lambda}=\begin{cases}
n^{-\frac{2m}{4m+1}},& \text{ if } c\gtrsim n^{\frac{3}{4m+1}}, b\gg \log_2\left(n^{\frac{2m}{4m+1}}\sqrt{\T_n}\right);\\
(nc)^{-\frac{m}{2(m+1)}},& \text{ if }  n^{\frac{1}{2m+1}} \ll c\lesssim n^{\frac{3}{4m+1}},b\gg \log_2\left(n^{\frac{2m+3}{4(m+1)}}c^{-\frac{2m+1}{4(m+1)}}\sqrt{\T_n}\right);\\
\lambda^{1/2},& \text{ if }  c\lesssim n^{\frac{1}{2m+1}},b\gg \log_2\left(\sqrt{n\T_n}\right)  \text{ with } \lambda \gg c^{-2m}.\\
\end{cases}
\end{equation}

 From~\eqref{rate2}, we can see that when $B$ is sufficiently large, i.e., $B \gtrsim n^{\frac{3}{4m+1}}\log_2\left(n^{\frac{2m}{4m+1}}\sqrt{\T_n}\right)$, the minimal separation rate $n^{-\frac{2m}{4m+1}}$ achieves the minimax rate of testing~\citep{SC13, SC17, shang:ejs:2020}, implying lossless asymptotic testing power using only quantized samples. In this case, the minimal number of bits for each data point, i.e., $b$, does not depend on $c$ but is determined by the smoothness of the function and the tail bound $\T_n$ of the error distribution. When $B$ is between $ n^{\frac{1}{2m+1}}\log_2\left(\sqrt{n\T_n}\right)$ and $n^{\frac{3}{4m+1}} \log_2\left(n^{\frac{2m}{4m+1}}\sqrt{\T_n}\right)$, the minimax rate of testing is no longer achievable, but the minimal separation rate still decays polynomially as the original sample size $n$ increases. Furthermore, in this intermediate phase of $B$, the lower bound of $b$ decreases as $c$ increases, implying that increasing $c$ rather than $b$ when allocating the total bits budget $B$ will more effectively improve the testing power. Finally, when $B$ is less than  $n^{\frac{1}{2m+1}}\log_2 \left(\sqrt{n\T_n}\right)$, the asymptotic lower bound for the minimal rate of separation is (roughly) of the order $c^{-m}$ with $c=B/b$, the number of quantized measurements that can be transmitted or stored, provided that $b\gg \log_2\left(\sqrt{n\T_n}\right)$. 

\section{Extensions}
\label{sec:ext}
Our prior investigations in Section~\ref{sec:asymptotics} assume that the hypothesized function $g_*(\cdot)$ in (\ref{H0}) and the order $m$ of the periodic Sobolev space $S^m(\mathbb{I})$ are both known. In reality, it might be interesting to test other hypotheses, e.g., whether $g_0$ has a parametric expression such as a linear function. Meanwhile, the order $m$ is often unknown. 
We will extend the prior works to such settings.

\subsection{Nonparametric Testing for Linearity of $g_0(\cdot)$} \label{sec: linearity}
In some applications, we are interested in testing whether $g_0(\cdot)$ resides in a parametric family. In this section, as an illustrative example, we consider testing the linearity of $g_0(\cdot)$:
\begin{equation}\label{eq:H0:linear}
H_0^{\textrm{linear}}: g_0 \in \mathcal{L}(\mathbb{I}) \textrm{ vs. } H_1^{\textrm{linear}}: g_0\in S_\rho^m(\mathbb{I})\backslash\{\mathcal{L}(\mathbb{I})\},
\end{equation}
where $\mathcal{L}(\mathbb{I})$ denotes the class of liner functions over $\mathbb{I}:=[0, 1]$. Testing the hypothesis that $g_0(\cdot)$ belongs to other parametric families governed by a finite number of parameters can be conducted in the same way with minor modifications.

To test~\eqref{eq:H0:linear}, we first obtain the least-square estimator $\widehat{g}(x)$, $x\in\mathbb{I}$ based on $Q(y_j)$'s, i.e., $\widehat{g}(x) = \argmin _{g\in \mathcal{L}(\mathbb{I})} \sum_{i=1}^n \left[g(x_i) - Q(y_i)\right]^2$. Subsequently, we define the new data as $y^{\textrm{linear}} = (Q(y_1) - \widehat{y}_1, \ldots, Q(y_n) - \widehat{y}_n)^T$, where $\widehat{y}_i = \widehat{g}(i/n)$. By applying the two-stage quantization Algorithm~\ref{alg:distributed} to $y^{\textrm{linear}}$, we can then obtain the quantized data $ z_\textrm{linear} = ( z_{\textrm{linear},1}, \ldots,  z_{\textrm{linear}, c})^T$. Following the same estimation procedure in Section~\ref{sec:est}, we can obtain a spline estimator $\widehat{g}^{\textrm{B}}_{\textrm{linear},\mu,t,c}$ based on the quantized data $z_\textrm{linear}$. 


The resulting test statistic is then defined as $T_{\textrm{linear},\mu,t,c} = \|\widehat{g}_{\textrm{linear},\mu,t,c}^{\textrm{B}}\|^2$, whose limiting distribution under  $H^\textrm{linear}_0$ is given by the following theorem.

\begin{theorem} \label{corollary linear}
Suppose that Condition (\textbf{B}) holds. If as $n,c\to\infty$, it holds that $h\to 0$, $ch\to \infty$, $b\gg\log_2\left(\sqrt{(nh^{1/2}+n(ch)^{-1})\T_n}\right)$, and $E([\widetilde{n}^{-1}\sum_{j=1}^{\widetilde{n}}Q(\epsilon_j)]^4)=O(c^2n^{-2})$, then under $H^\textrm{linear}_0$, one has that
\begin{equation}\label{clt:T:t1}
\frac{cT_{\textrm{linear},\mu,t,c}-\textrm{trace}(A)\widehat\tau_k^2}{s_c\widehat\tau_k^2}\overset{d}{\longrightarrow}N(0,1),
\,\,\textrm{as $n, c\rightarrow\infty$,}
\end{equation}
where $T_{\mu,t,c}$, $A$, $\widehat{\tau}_k^2$ and $s_c$ are as defined in Section~\ref{sec:test} but based on $y^{\textrm{linear}}$.
\end{theorem}

Theorem~\ref{corollary linear} is an immediate extension of Theorem~\ref{asymp:distribution} to testing the linearity of $g_0(\cdot)$ using only quantized samples, indicating that the proposed nonparametric linearity test is valid under mild conditions. To investigate the power of the proposed linearity test against the alternative $H_1^{\textrm{linear}}$, we define the distance between $g_0(\cdot)$ and the linear function space $\mathcal{L}(\mathbb{I})$ as $\|g_0 - \mathcal{P}_{\mathcal{L}(\mathbb{I})}(g_0)\|$, where $\mathcal{P}_{\mathcal{L}(\mathbb{I})}(g_0) = \argmin_{f\in \mathcal{L}(\mathbb{I})} \|g_0 - f\|^2$ is the projection of $g_0(\cdot)$ to $\mathcal{L}(\mathbb{I})$. The magnitude of $\|g_0 - \mathcal{P}_{\mathcal{L}(\mathbb{I})}(g_0)\|$ characterizes how far the true function $g_0(\cdot)$ deviates from any linear function in $\mathcal{L}(\mathbb{I})$. Note that under null hypothesis $H_0^{\textrm{linear}}$, one has that $\|g_0 - \mathcal{P}_{\mathcal{L}(\mathbb{I})}(g_0)\| = 0$.

The following theorem describes the asymptotic power of the proposed nonparametric linearity test.

\begin{theorem}\label{power:linearity}
Suppose that Condition (\textbf{B}) hold. If as $n,c\to\infty$, it holds that $h\to 0$, $ch\to \infty$, $b\gg\log_2\left(\sqrt{(nh^{1/2}+n(ch)^{-1})\T_n}\right)$, and $E([\widetilde{n}^{-1}\sum_{j=1}^{\widetilde{n}}Q(\epsilon_j)]^4)=O(c^2n^{-2})$, then for any $\eta>0$, there exists positive constants $C_\eta$ and $N_\eta$  such that
for any $c\ge N_\eta$,

\[
\inf_{\substack{g\in S_\rho^m(\mathbb{I})\\
\|g-\mathcal{P}_{\mathcal{L}(\mathbb{I})}(g)\|_c\ge C_\eta\delta_{n,c,\lambda}^{\textrm{linear}}}} P(\textrm{reject $H_0^{\textrm{linear}}$}|\textrm{$H_1^{\textrm{linear}}$ is true})\ge1-\eta,
\]
where $\delta_{n,c,\lambda}^{\textrm{linear}}=\sqrt{(nh^{1/2})^{-1}+\lambda+(nch^2)^{-1}}$ and $\|g\|_c=\sqrt{\sum_{i=1}^c f^2(i/c)/c}$ with function $f(\cdot)$ as defined in~\eqref{eq:f0}.
\end{theorem}

Based on Theorem~\ref{power:linearity}, we can see that for a given quantized sample of size $c$, the same separation rate for testing can be achieved by the proposed nonparametric linearity test as described in~\eqref{rate1}. Furthermore, the proofs of Theorems~\ref{corollary linear}-\ref{power:linearity} are similar to those of  Theorem~\ref{asymp:distribution} and Theorem~\ref{power:thm} by recognizing the fact that the least square estimator $\widehat{g}(\cdot)$ satisfies that $\sup_{x\in\mathbb{I}}|\widehat{g}(x)-g_0(x)|=O_p(n^{-1/2})$, whose impact is negligible for a nonparametric spline estimator. It is therefore trivial to extend Theorems~\ref{corollary linear}-\ref{power:linearity} to testing whether $g_0(\cdot)$ resides in other parametric families as long as an uniformly root-n consistent parametric estimator $\widehat{g}(\cdot)$ is available.

\subsection{Adaptive Nonparametric Test When $m$ is Unknown} \label{sec: adaptive}
From~\eqref{rate1}, we can see that the power of the proposed nonparametric test depends crucially on the order $m$ of the periodic Sobolev space where the underlying true function $g_0(\cdot)$ resides. However, the order $m$ may be unknown in practice.
One popular strategy is to set $m=2$ regardless of the underlying truth, which may lead to sub-optimal testing power. In this section, we construct an optimal adaptive nonparametric testing procedure based on quantized samples that doesn't require $m$.

Let $m_*$ denote the unknown true order of the Sobolev space to which $g_0(\cdot)$ belongs, and assume that $m_*$ is an integer between two known integers $m_l$ and $m_u$. For instance, one can set $m_l=1$ and $m_u=\textrm{poly}(\log{n})$ so that, as $n$ diverges, $m_*$ is guaranteed to belong to $[m_l,m_u]$. For any given integer $m$, we can calculate the test statistics $T_m := T_{\mu, t, c}$ defined in (\ref{L2T})  with the tuning parameter $\lambda_m=a_n^{2m}n^{-4m/(4m+1)}\log(m_u)^{2m/(4m+1)}$ where $a_n$ may depend on $n$ but is free of $m$. We remark that the upper bound $m_u$ may be slowly diverging as $n\to\infty$. Our adaptive nonparametric testing procedure is summarized as follows.\\

Step 1. For any $m_l \leq m \leq m_u \rightarrow \infty$, calculate the standardized testing statistic
\[
\xi_m = \frac{cT_{m}-\textrm{trace}(A_m)\widehat\tau_k^2}{s_{c,m}\widehat\tau_k^2},
\]
\hskip 5em where $T_m$, $A_m$, $\widehat{\tau}_k^2$ and $s_{c,m}$ are as defined in Section~\ref{sec:test}.\\

Step 2. Calculate the maximum of $\xi_m$'s, i.e., $\xi_{\textrm{max}} = \max_{m_l \leq m \leq m_u} \xi_m$.\\

Step 3. Standardize $\xi_{\textrm{max}}$ as following
\[\xi_* = C_n(\xi_{\textrm{max}}-C_n),
\]
\hspace{5.5em} where $C_n$ satisfies $2\pi C_n^2\exp(C_n^2)=m_u^2$.\\

For the validity of the proposed adaptive nonparametric test, we assume that the following Condition (\textbf{C}) holds.
\begin{align*}
\textrm{Condition (\textbf{C})}: &\text{ (a) } m_u \lesssim \log^{d_0}(n) \text{ for some }d_0 \in (0, 1/2), \quad \text{(b) }a_nn^{-2/(4m_u+1)}[\log(n)]^2 \rightarrow 0,\\ \, &\text{ and (c) }n^{2/(4m_l+1)}\log (n)/(ca_n) \rightarrow 0, \text{ as } n,c\to\infty.
\end{align*}
\textrm{Condition (\textbf{C})} requires that the searching range for $m$ can not be too large by imposing a slowly diverging uppper bound on $m_u$. In addition, the total number of quantized samples, i.e., $c$, that need to be transmitted or stored can not be too small compared to $n$, and is jointly determined by $m_l,m_u$ and the tuning parameter $a_n$. These conditions are rather mild and have been used in the literature, see, e.g., \cite{shang:colt2019,LSC18}. The following theorem describes the asymptotic behavior of $\xi_*$ under $H_0$.
\begin{theorem}\label{adaptive:size}
Suppose that both Conditions (\textbf{B}) and (\textbf{C}) hold, $E([\widetilde{n}^{-1}\sum_{j=1}^{\widetilde{n}}Q(\epsilon_j)]^4)=O(c^2n^{-2})$, and that
\[
b\gg \max\left\{\log_2\left(\sqrt{(nh_{m_l}^{1/2}+n(ch_{m_l})^{-1})\T_n}\right), \log_2\left(\sqrt{(nh_{m_u}^{1/2}+n(ch_{m_u})^{-1})\T_n}\right)\right\}.
\]
Then, under $H_0$ given in~\eqref{H0}, for any $\alpha \in (0, 1)$, it holds that
\begin{equation}
P(\xi_*\leq q_\alpha) \rightarrow 1 - \alpha, \,\,\textrm{as $n,c \rightarrow\infty$},
\end{equation}
where $q_\alpha = -\log(-\log(1-\alpha))$.
\end{theorem}

The intuition behind Theorem~\ref{adaptive:size} is straightforward: under $H_0$, the limiting distribution of each $\xi_m$ is normal, which suggests that the asymptotic distribution of the maxima $\xi_*$ should be close to the extreme value distribution. We use the techniques developed in~\cite{adaptiveapproximation} to formalize the proof.

Next, we investigate the asymptotic power of the proposed adaptive nonparametric test under the alternative $H_1: g_0\in S_\rho^{m^*}(\mathbb{I})\backslash\{\mathcal{L}(\mathbb{I})\}.
$
\begin{theorem}\label{adaptive:power}
Suppose that both Conditions (\textbf{B}) and (\textbf{C}) hold, $E([\widetilde{n}^{-1}\sum_{j=1}^{\widetilde{n}}Q(\epsilon_j)]^4)=O(c^2n^{-2})$, and that 
\[
b\gg \max\left\{\log_2\left(\sqrt{(nh_{m_l}^{1/2}+n(ch_{m_l})^{-1})\T_n}\right), \log_2\left(\sqrt{(nh_{m_u}^{1/2}+n(ch_{m_u})^{-1})\T_n}\right)\right\}.
\]
Then, for any $\eta>0$, there exists positive constants $C_\eta$ and $N_\eta$ such that
for any $c\ge N_\eta$,
\[
\inf_{\substack{g\in S_\rho^{m_*}(\mathbb{I})\\
\|g\|_c\ge C_\eta\delta_{n, c, a_n}}} P(\textrm{reject $H_0$}|\textrm{$H_1$ is true})\ge1-\eta,
\]
where $\delta_{n,c,a_n}=n^{-\frac{2m_*}{4m_*+1}}[\log(m_u)]^{\frac{m_*}{4m_*+1}}\sqrt{a_n^{-\frac{1}{2}}+ c^{-1}a_n^{-2}n^{\frac{3}{4m_*+1}}[\log(m_u)]^{-\frac{2(m_*+1)}{4m_*+1}} + a_n^{2m_*}}$ and $\|g\|_c=\sqrt{\sum_{i=1}^c f^2(i/c)/c}$ with function $f(\cdot)$ as defined in~\eqref{eq:f0}.
\end{theorem}

Based on the form of separation rate $\delta_{n,c,a_n}$ obtained in Theorem~\ref{adaptive:power}, it is straightforward to show that the minimal separation rate is obtained when $a_n=a_0$ for some constant $a_0>0$,  provided that $$c\gg \max\{n^{2/(4m_l+1)}\log(n),  n^{3/(4m_*+1)}\}[\log (m_u)]^{-1/(4m_*+1)}$$ so that Condition~(\textbf{C}) is met and the second term inside the square-root part of $\delta_{n,c,a_n}$ is negligible. Specifically, if $c\gg \max\{n^{2/(4m_l+1)}\log(n),  n^{3/(4m_*+1)}\}[\log (m_u)]^{-1/(4m_*+1)}$, one has that
\begin{equation}
\label{opt-ada}
\inf_{a_n>0}\delta_{n,c,a_n}\asymp n^{-\frac{2m_*}{4m_*+1}}\left[\log(m_u)\right]^{\frac{m_*}{4m_*+1}}.
\end{equation}

The minimal separation rate~\eqref{opt-ada} is the same as the one obtained in~\cite{shang:colt2019,LSC18} and is minimax for the adaptive nonparametric test. This suggests that with the quantized samples, the proposed adaptive test can still achieve the optimal testing power if the bits budget satisfies $$B \gg \max\{n^{2/(4m_l+1)}\log(n),  n^{3/(4m_*+1)}\}[\log (m_u)]^{-1/(4m_*+1)}\log(n),$$ and we take $b=\log_2\left( \sqrt{n\T_n/\sigma^2}\right)\asymp \log(n)$ as suggested in Section~\ref{sec:select_c_k}. Compared to the minimax rate of testing when $m_*$ is known, which is given in~\eqref{rate1}, the minimal separation rate~\eqref{opt-ada} is only inflated by a factor of $[\log (m_u)]^{4m_*/(4m_*+1)}$. This is the price to pay for searching $m$ over $m_l\le m\le m_u$. Furthermore, we wish to remark that the lower bound of the bits budget $B$ depends not only on the true order $m_*$ but also on the smallest guess of the order, i.e., $m_l$. This can be interpreted by the fact that $\xi_{m_l}$ in Step 1 of the adaptive test is constructed based on an under-smoothed spline estimator, which may have a larger order of estimation bias. In practice, it is convenient to set $m_l=1$ as suggested by~\cite{LSC18}. However, a more accurate guess of $m_l$ may lead to a smaller bits budget $B$ required to achieve the minimax rate of testing.

\section{Simulation Studies}\label{sec:simulation}
In this section, we evaluate the finite sample performance of the proposed methods through a set of simulation studies. For all simulation settings except for Section~\ref{sim:linearity}, the data are generated from the following model
\begin{equation}
\label{eq:simdata}
    y_i = r\beta_{3, 2}(x_i) + \epsilon_i,\quad \text{ with } x_i=\frac{i}{n}, i=1,\cdots,n,
\end{equation}
where $\beta_{3,2}(\cdot)$ is the density function of the beta distribution with parameters $3$ and $2$, $\epsilon_i$'s are independent random errors. Two types of errors were considered:
(1) $\epsilon\sim N(0,1)$; (2) $\epsilon\sim N(0,1.5^2)$. We consider $r$ from $0$ to $1$, and various sample sizes $n$. In particular, $r=0$ is used to examine the empirical size of the proposed test  under $H_0$, and other values of $r$ are used to check the empirical powers against alternatives. The target significance level was chosen as $\alpha=0.1$.

For all simulation studies, we consider the uniform quantization scheme outlined in Section~\ref{sec:quan}. Specifically, for the data quantization step, for a given bits budget $B$, we choose $c, k$ following the approach suggested in Section \ref{sec:select_c_k} with a $\T_n/\sigma^2=2.5\log(n)$. For each simulation, the quantization ranges $t_1, t_{k-1}$ are defined as $t_1 = \mu_0 - \sqrt{ 2.5\sigma^2\log(n)}, t_{k-1} = \mu_0 + \sqrt{ 2.5\sigma^2\log(n)}$, where $\mu_0=\int_0^1g_0(x)dx$ with $g_0(\cdot)$ being the regression function in model (\ref{regression:model}). The use of $\mu_0$ and $\sigma$ is of limited importance and can be replaced with any reasonable alternatives such as setting $\mu_0=0$ or using estimates based on historical data. Summary statistics from each simulation setting were based on $1000$ independent simulation runs. Except for Section \ref{sim:adaptive}, we considered periodic Sobolev space of order $m=2$ with kernel function $K(x, y)=-B_{4}(|x-y|)/24$, where $B_4$ is the Bernoulli polynomial of order $4$. The tuning parameter $\lambda$ was set as $\lambda=\widehat{\lambda}_{\textrm{GCV}}/\log(c)$ with $\widehat{\lambda}_{\textrm{GCV}}$ being picked by GCV.

\subsection{Estimation Performance of $\widehat{g}^{\textrm{B}}_{\mu,t,c}(\cdot)$}\label{sim:est}
In this section, we first evaluate the estimation performance of the spline estimator $\widehat{g}^{\textrm{B}}_{\mu, t, c}(\cdot)$ defined in (\ref{class:smoothing:spline:quantized}) that is based on only quantized samples. We generated data from model~\eqref{eq:simdata} with  $r=0.5, 1$ and sample sizes $n=1000, 2000, 3000, 5000, 10000$. For each $n$, we gradually increase the bits budget $B$ from $30$ to $1000$.   The estimation accuracy was evaluated by the mean squared errors $({\rm RMSE})$ defined as $\|\widehat{g}_{\mu,t,c}^{\textrm{B}}-g_0\|$. The simulation results were summarized in Figure \ref{figure:mse}, which suggests that the MSEs decrease as $n$ increases in all considered settings. Moreover, as $B$ increases, the MSEs first decreases rapidly at the beginning and then stabilize at some levels. This observation is consistent with our theoretical results established in Section~\ref{sec:est:rate}, which state that increasing $B$ (or equivalently, $c$ and $b$) will diminish the impact of information loss due to the data averaging and data quantization, and as a result $\widehat{g}^{\textrm{B}}_{\mu,t,c}(\cdot)$ becomes more accurate. Furthermore, we can also observe after $B$ exceeds a certain threshold, the MSEs of $\widehat{g}^{\textrm{B}}_{\mu,t,c}(\cdot)$ stabilize, which supports the findings in Corollary~\ref{coro2}. Specifically, when $B$ is sufficiently large, the  MSEs of $\widehat{g}^{\textrm{B}}_{\mu,t,c}(\cdot)$ reaches the estimation error lower bound of the classical spline estimator based on the complete data.

\begin{figure}[ht!]
  \centering
  \includegraphics[width=6.3in]{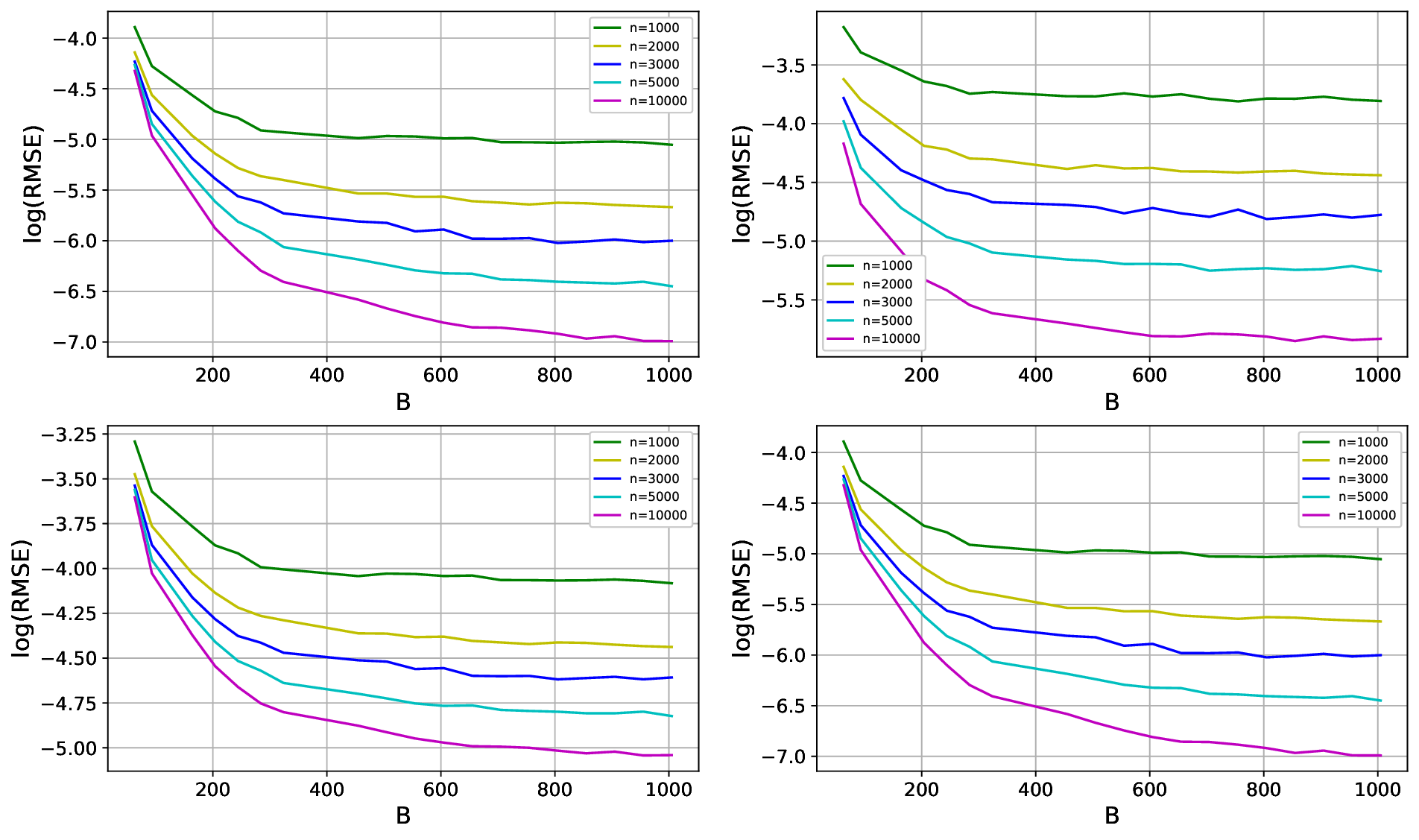}
  \caption{\it MSEs of spline estimators based on quantized data: $r=0.5$ for left 2 panels and $r=1$ for right 2 panels;
  $\epsilon\sim N(0, 1)$ for top 2 panels and $\epsilon\sim N(0, 1.5^2)$ for bottom 2 panels.}
  \label{figure:mse}
\end{figure}


\subsection{Nonparametric Test with $g_*(\cdot)\equiv 0$ and $m=2$}\label{sim:test}
In this section, we investigate the empirical sizes and powers of the nonparametric test proposed in Section~\ref{sec:test}, when $g_*(\cdot)\equiv0$ in the null hypothesis (\ref{H0}) and $m=2$ treated as  known. The data was generated from the model~\eqref{eq:simdata} with various $r$ and sample sizes $n$.

Figure \ref{figure:power1} reports the empirical sizes of the proposed nonparametric test  when $r=0$ and the empirical powers when $r>0$, respectively.
Specifically, in all case scenarios, the empirical sizes of the proposed test are close to the target nominal level $0.1$ as the sample size $n$ increases.  When either $r$ or $n$ increases, we observe that the empirical powers of the proposed test gradually approach one, which suggests that the proposed testing procedure is consistent for the alternative hypothesis that has a sufficiently large deviation (relative to the sample size $n$) from the $H_0$. Furthermore, after the bits budget $B$ exceeds a certain threshold, the empirical powers of the proposed nonparametric test are rather close to each other, which supports our theoretical findings in Section~\ref{sec:power:fixm}.


\begin{figure}[ht!]
  \centering
  \includegraphics[width=6.3 in]{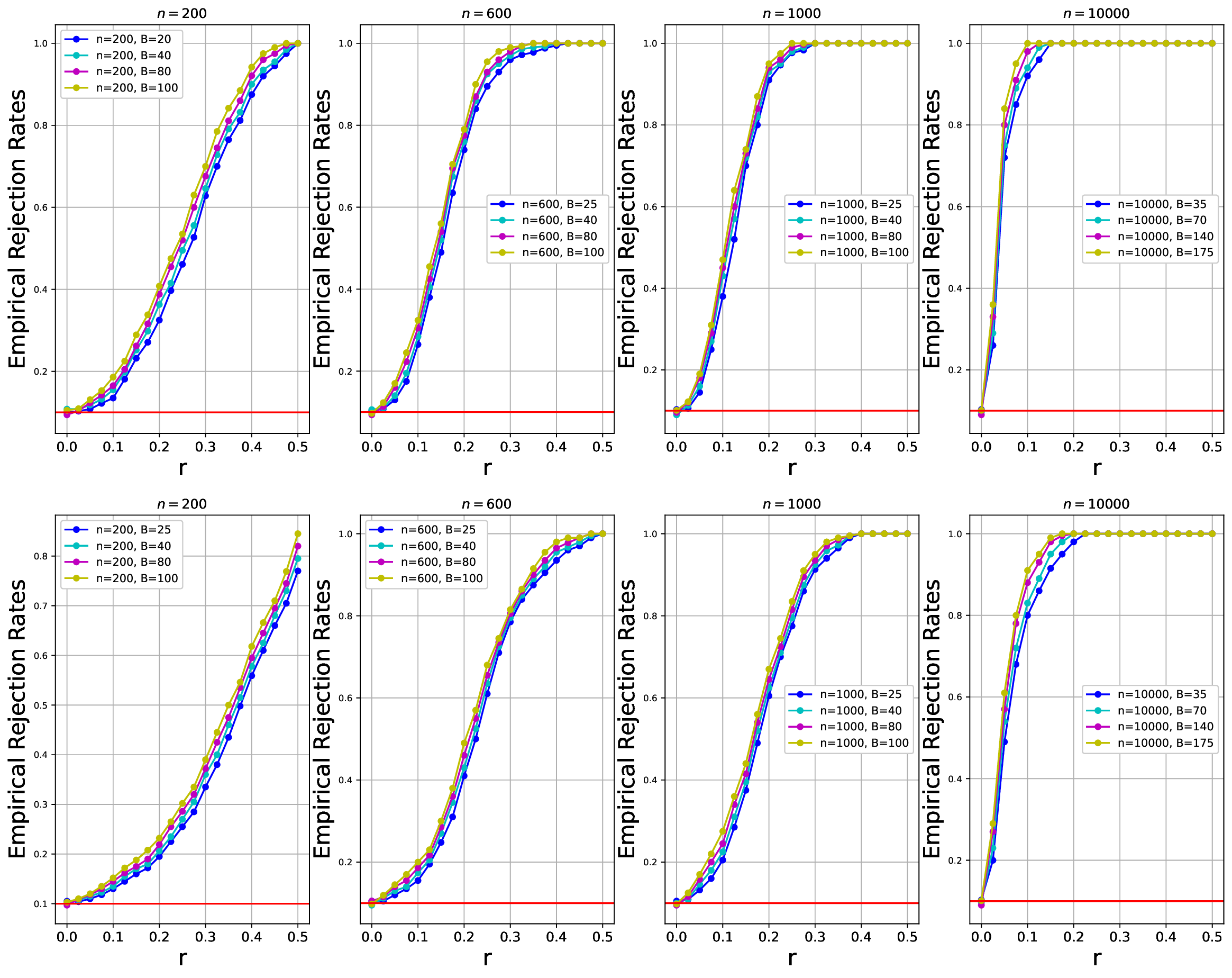}
 \vspace{-2em}
   \caption{\it Empirical rejection rates at $0.1$ significance level: $\epsilon\sim N(0, 1)$ for the top 4 panels; $N(0, 1.5^2)$ for the bottom 4 panels.}
  \label{figure:power1}
\end{figure}

\subsection{Adaptive Nonparametric Test with an Unknown $m$}\label{sim:adaptive}
In this section, we investigate the validity and the empirical power of the adaptive nonparametric test proposed in Section \ref{sec: adaptive}, for which the order parameter $m$ is searched from $m_l=1$ to $m_u=\sqrt{\log n}$. Figure \ref{figure:adaptive} shows the empirical rejection rates of the proposed nonparametric adaptive test at the $0.1$ significance level. We can observe that when $r=0$, the empirical rejection rates are rather close to the nominal level $0.1$.  For any given $r>0$, we can see that the empirical rejection rates increase as the sample size $n$ increases. For a fixed $n$, as $r$ increases, the empirical rejection rates increase steadily and eventually reach the $100\%$ when $n=600, n=1000$ and $n=10000$. Finally, as long as the bits budget $B$ exceeds a certain threshold, the empirical rejection rates are rather similar in most settings. All these observations are consistent with our theoretical findings in Theorem~\ref{adaptive:power}. Furthermore, the empirical rejection rates are smaller than the nonparametric test (non-adaptive) in Section \ref{sim:test} under the same setting, which is the price paid for adaptivity in $m$.

\begin{figure}[ht!]
\centering
  \includegraphics[width=6.3 in]{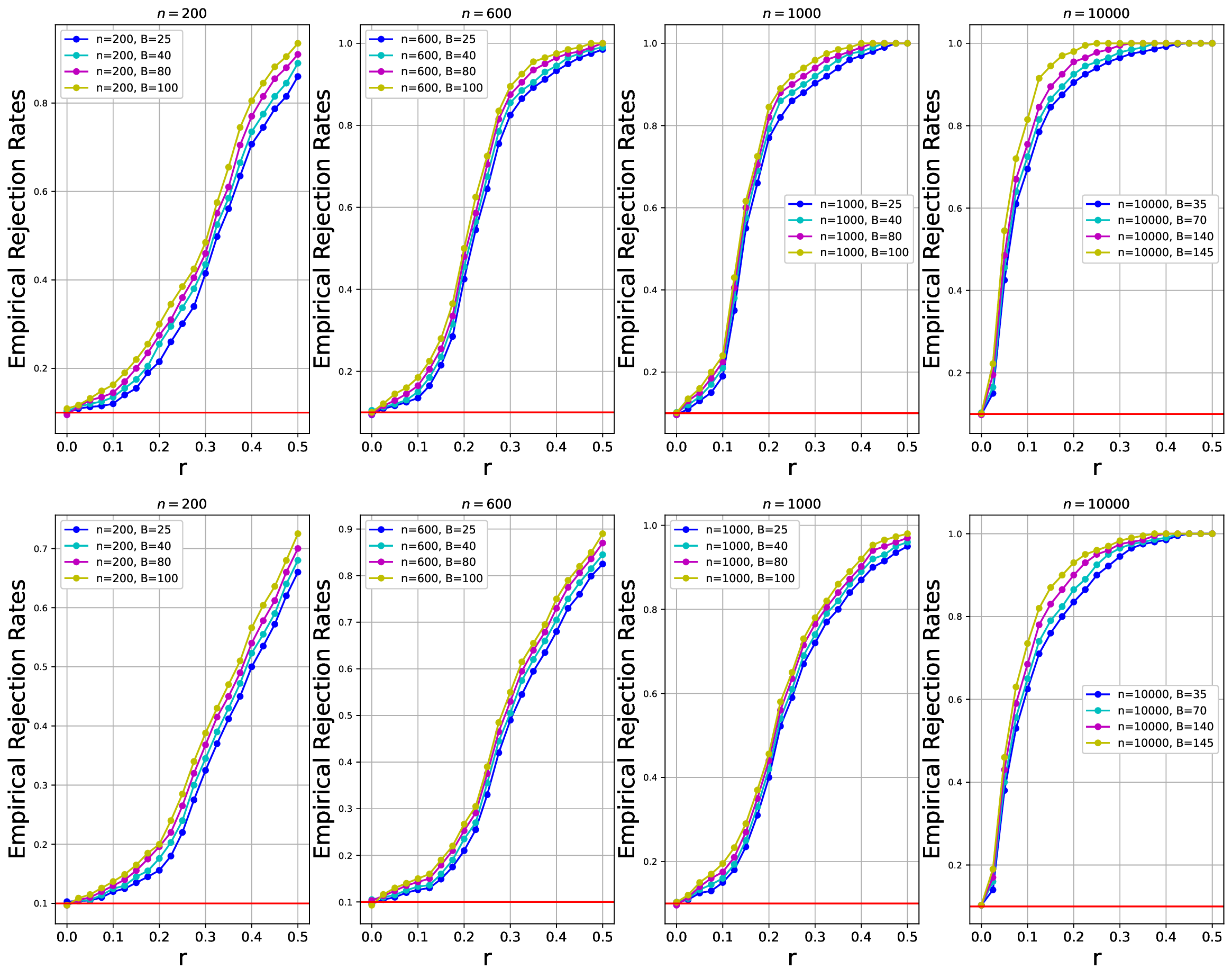}
       \caption{\it Empirical rejection rates of the nonparametric adaptive test at the $0.1$ significance level: $\epsilon\sim N(0, 1)$ for the top 4 panels; $N(0, 1.5^2)$ for the bottom 4 panels.}
  \label{figure:adaptive}
  \end{figure}

\subsection{Nonparametic Linearity Test with $m=2$}\label{sim:linearity}
In this section, we study the empirical performance of the proposed nonparametric linearity test. The data is generated from the following model
\[
y_i = r\beta_{3, 2}(x_i) + 3x_i + 2 + \epsilon_i,\quad \text{ with } x_i=\frac{i}{n}, i=1,\cdots,n,
\]
where $\beta_{3,2}(\cdot)$ is the density function of the beta distribution with parameters $3$ and $2$, and  $\epsilon_i$'s are independent random errors. Two types of errors were considered:
(1) $\epsilon\sim N(0,1)$; (2) $\epsilon\sim N(0,1.5^2)$. When $r=0$, the model satisfies the null hypothesis $H_0^{\textrm{linear}}$, and as $r$ increases, the departure from the linear model becomes increasingly larger.
\begin{figure}[ht!]
\centering
  \includegraphics[width=6.3 in]{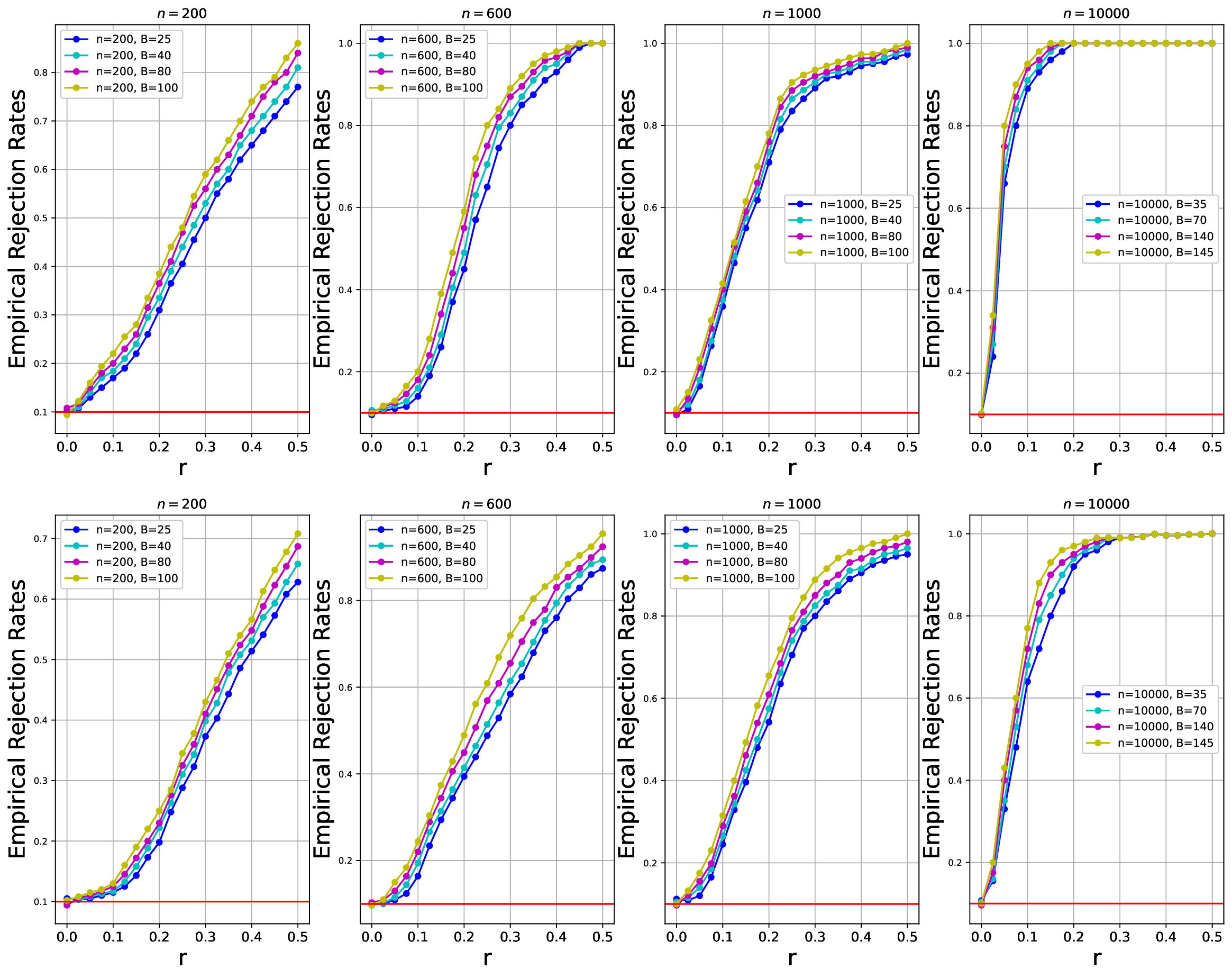}
     \caption{\it Empirical rejection rates of the nonparametric linearity test at $0.1$ significance level: $\epsilon\sim N(0, 1)$ for the top 4 panels; $N(0, 1.5^2)$ for the bottom 4 panels.}
  \label{figure:Linear}
  \end{figure}
  
 Figure \ref{figure:Linear} reports the empirical rejection rates of the nonparametric linearity test proposed in Section~\eqref{sec: linearity} at the significance level $0.1$ . It is straightforward to see that, when $r=0$, the empirical rejection rates are close to the nominal size, indicating the validity of the test asserted by Theorem~\ref{corollary linear}. For any given $r>0$, we can see that the empirical rejection rates increase as the sample size $n$ increases. For a fixed $n$, as $r$ increases, the empirical rejection rates increase steadily and eventually reach the $100\%$. Finally, as long as the bits budget $B$ exceeds a certain threshold, the empirical rejection rates are rather similar in most settings. All these observations are consistent with our theoretical findings in Theorem~\ref{power:linearity}.

\section{Real Data Analysis} \label{real data}
In this section, we apply the proposed methods to the Combined Cycle Power Plant Data \citep{RealData1,RealData2}, which can be downloaded at \url{http://https://archive.ics.uci.edu/ml/datasets/Combined+Cycle+Power+Plant}. The data set consists of $n=9568$ observations from a Combined Cycle Power Plant over 6 years (2006-2011).
The purpose of our analysis is to explore the relationship between the net hourly electrical energy output of the plant between three environmental factors:  temperature, ambient pressure, and relative humidity.

Figure \ref{figure:real data} displays the estimated curve based on $B$-bits quantizations ($B=35,70,140,175$) and full data, for which the periodic spline of order $m=2$ was used. For the quantization step, we choose $\T_n = 2.5\times\hat{\sigma}^2\log(n)$, where $\hat{\sigma}^2$ is the standard deviation of the observated data, and $c, k$ are determined by Section \ref{sec:select_c_k}. We can observe that
the spline estimator based on quantized data with $B=35$, i.e., the green curve, is rather different from the other curves in the two analyzes. When the bits budget $B$ increases to more than $70$, such differences quickly diminish. This observation demonstrates the effectiveness of the proposed $B$-bits quantization scheme.

\begin{figure}[ht!]
  \centering
  \includegraphics[width=6.5 in]{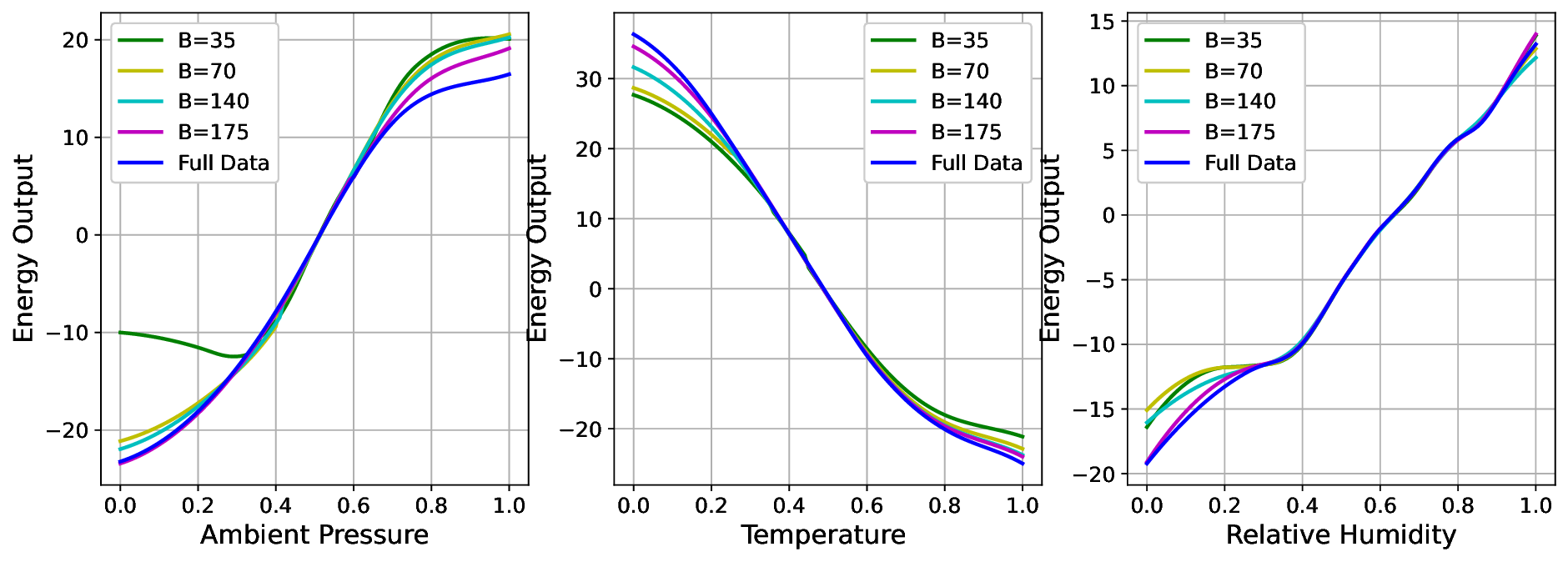}
   \caption{\it Spline estimators based on $B$-bits quantizations ($B$ = 35,70,140,175) and full data. Sample size is $n=9568$.}
  \label{figure:real data}
\end{figure}

{ Next, we conduct some hypothesis tests for the relationship between the net hourly
electrical energy output and other three environmental factors. The first test is to test whether there is an association between the energy output and three environmental factors. We consider both non-adaptive and adaptive nonparametric tests. For the non-adaptive nonparametric test, $m=2$ is used. The p-values are all close to zero, implying strong rejections of the null hypothesis. This is not surprising based on the shapes of the spline estimators illustrated in Figure~\ref{figure:real data}.

Next, there appears to be a strong linear association between relative humidity and the energy output in Figure~\ref{figure:real data}. Based on this conjecture, we proceed to test whether the associations between these three environmental factors and the energy output are linear or nonlinear, using the nonparametric linearity test proposed in Section~\ref{sec: linearity}. The p-values for the first two environmental factors, i.e., ambient pressure and temperature,  are both close to zero, indicating strong rejections of the null hypothesis. Figure~\ref{figure:p_val} illustrates the p-values of the nonparametric linearity test for the relationship between relative humidity and energy output as a function of the bits budget $B$.  We can see that the nonparametric linearity test based on quantized data fails to reject the null hypothesis,  which echos our conjecture based on Figure \ref{figure:real data}.}

\begin{figure}[ht!]
  \centering
  \includegraphics[width=4 in]{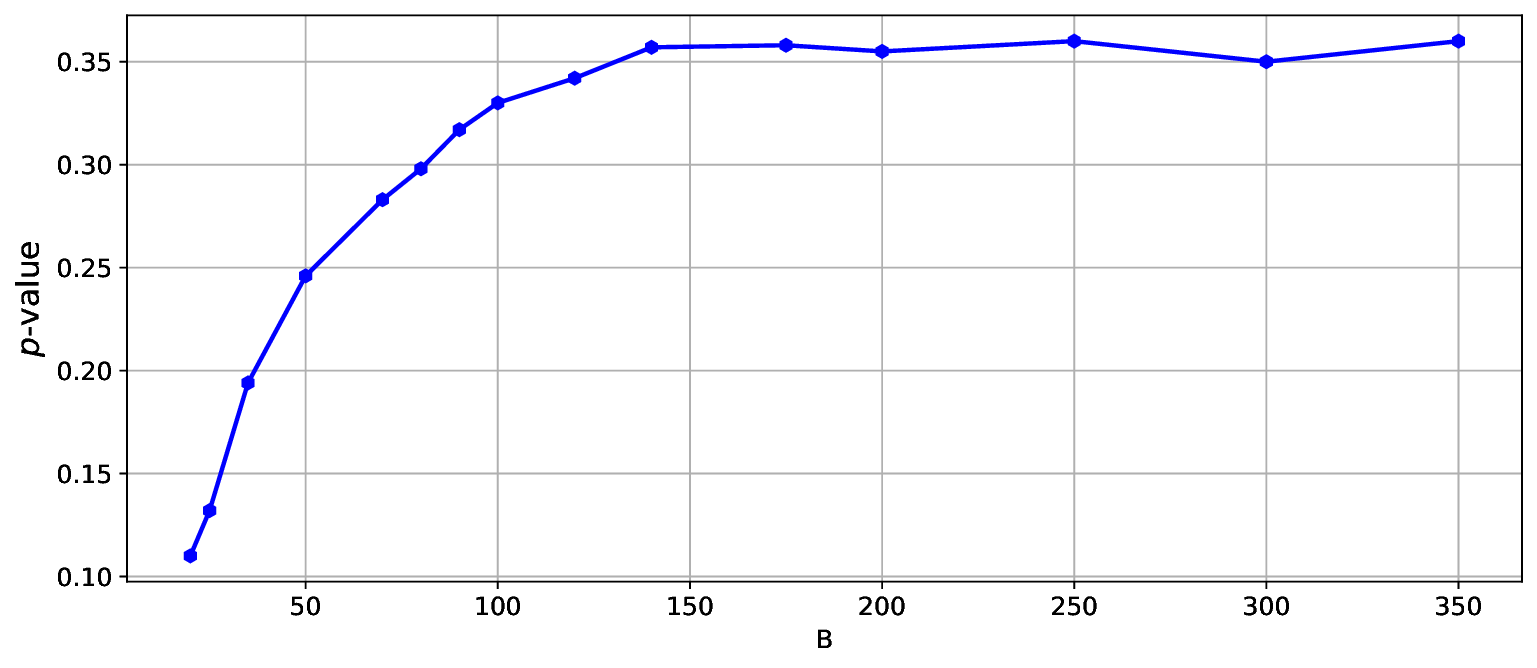}
   \caption{\it  $P$-values of the nonparametric linearity test for the relationship between relative humidity and energy output as a function of $B$.}
  \label{figure:p_val}
\end{figure}

\section{Discussion}
In this paper, we propose a set of non-parametric testing procedures based on quantized observations, including the non-adaptive nonparametric test, the nonparametric linearity test, and the adaptive nonparametric test. The proposed tests are easy-to-use based on $L_2$-metric between the quantization spline
estimators and the hypothesized function. We investigate the asymptotic validity and testing powers of the proposed tests and show how the asymptotic testing powers changes as the bits budget $B$ increases.

In the end, we discuss two additional extensions. First, the present paper only
deals with periodic splines. It is interesting to extend our results to more general splines
or even kernel ridge regression. The special periodic spline largely reduces the difficulty
level of the technical proofs. Indeed, the majority of the proofs can be accomplished by
exact calculations based on trigonometric series. For general RKHS, exact calculations may not be possible, and more involved proofs are needed. Second, the nonparametric linearity test can be easily extended to testing general composite null hypotheses such as $H_0^{\textrm{general}}: g_0(x)=h_0(x, \theta)$ for some function $h_0$ governed by parameters $\theta\in \mathbb{R}^p$ with a fixed $p$. However, when $p$ is diverging as $n$ increases, it will be more challenging to investigate the asymptotic behavior of the proposed test statistic and will be an interesting future research topic. 

\bibliography{references}

\newpage 
\appendix
\section{Structure of the proofs}
In this section, we outline the high-level structure of the proofs for the main theorems. 
\begin{itemize}
    \item The proof of Theorem \ref{upper:bound:hat:bb:f} is mainly based on Lemma \ref{a:preliminary:thm}.
\begin{itemize}
\item In Lemma \ref{a:preliminary:thm}, we provide an upper bound for the difference between two smoothing spline estimators.
\end{itemize}
      \item The proof of Theorem \ref{asymp:distribution} relies on Stein’s exchangeable pair method. Specifically, we first prove that the asymptotic normality of $\frac{cT_{\mu^\star,t,c}-\textrm{trace}(A)\tau^{\star2}_k}{s_c\tau^{\star2}_k}$ based on $z^\star_i$'s, where $z^\star_i$'s are the quantized samples corresponding to $\mu_j=\mu^\star_j$ for $1\le j\le k$, $\tau^{\star2}_k = Var(z^\star_i|H_0)$, and 
    \begin{equation}
\mu^\star_j=\frac{\sum_{i=1}^n E\{y_iI(y_i\in R_j(t))\}}{\sum_{i=1}^nP(y_i\in R_j(t))}. \end{equation}
Next, we prove that 
\[
\frac{cT_{\mu^\star,t,c}-\textrm{trace}(A)\tau^{\star2}_k}{s_c\tau^{\star2}_k} - \frac{cT_{\mu,t,c}-\textrm{trace}(A)\widehat{\tau}_k^2}{s_c\widehat{\tau}_k^2} = o_p(1).
\]  
\begin{itemize}
    \item In Lemma \ref{hat_tau_k} and Lemma \ref{hat_tau_k_linear}, we prove the error rate introduced by quantization of variance using Algorithm 2, which are needed for the proof of Theorem~\ref{asymp:distribution}.
        \item In Lemma \ref{power:thm:lemma:1}, we quantify the difference of quantized sample under $H_1$ and $H_0$.
    \end{itemize}
\item In the proof of Theorem \ref{power:thm}, we first decompose the test statistic into two parts, 
\[
\frac{cT_{\mu,t}-\textrm{trace}(A)\widehat{\tau}_k^2}{s_c\widehat{\tau}_k^2}\nonumber=\frac{z^TAz-(z^0)^TAz^0}{s_c\widehat{\tau}_k^2}+\frac{(z^0)^TAz^0-\textrm{trace}(A)\widehat{\tau}_k^2}{s_c\widehat{\tau}_k^2},
\]
where $z^0$ is the vector of quantized sample under $H_0: g_0=0$. Under Theorem \ref{asymp:distribution}, we know the second term is $O_p(1)$. In the first term, it is straightforward to see that $z^TAz-(z^0)^TAz^0 = (z-z^0)^TA(z-z^0)+2(z-z^0)^TAz^0$.
\begin{itemize}
    \item In Lemma \ref{power:thm:lemma:2}, we establish a lower bound for $(z-z^0)^TAz^0$.
    \item In Lemma \ref{power:thm:lemma:3} and Lemma \ref{power:thm:lemma:4}, we establish the lower bound for $(z-z^0)^TA(z-z^0)$.
\end{itemize}

\item In the proof of Theorem \ref{adaptive:size}, observe that the test statistic for each $m$
\[\xi_m = \frac{cT_{m}-\textrm{trace}(A_m)\widehat{\tau}_k^2}{s_{c,m}\widehat{\tau}_k^2}
=\frac{(z^0)^TA_mz^0-\textrm{trace}(A_m)\widehat{\tau}_k^2}{s_{c,m}\widehat{\tau}_k^2}
\]
is in a quadratic form. 
\begin{itemize}
    \item Lemma \ref{adaptive} proves that the maximum of the quadratic form follows an extreme value distribution.
    \item Lemma \ref{adaptive_rate} provides the rate conditions such that Lemma \ref{adaptive} holds.
\end{itemize}
\item The idea of Theorem \ref{adaptive:power} is similar to the proof of Theorem \ref{power:thm} and Theorem \ref{power:linearity}.
\end{itemize}
\section{Notation}
In this section, we first summarize some notations which are frequently used through out the paper for the reader's convenience.
\begin{table}[htb]
\begin{tabular}{|l|l|}
\hline
\textbf{Symbol} & \textbf{Description} \\
$c$      & number of groups.\\
 $\widetilde{n}$ & number of observations in each group which is defined as  $\widetilde{n} = n/c$.\\
 $(\mu_1, \ldots, \mu_k)^T$ & quantized value.\\
 $(t_1, \ldots, t_{k-1})^T$ & cut-off points of quantized intervals.\\
 $y= (y_1, \ldots, y_n)^T$    &         vector of response .\\
 $\widetilde{y} = (\widetilde{y}_1,\ldots, \widetilde{y}_c)^T$ & average of the response which is defined as $\widetilde{y}_i = \frac{1}{\widetilde{n}}\sum_{j=(i-1)\widetilde{n}+1}^{i\widetilde{n}}y_j$.\\
 $z=(z_1, \ldots, z_c)^T$ & vector of quantized sample.\\
 $z^0=(z^0_1, \ldots, z^0_c)^T$ & vector of quantized sample under $H_0: g_0=0$.\\
 $\widetilde{z}=(\widetilde{z}_1, \ldots, \widetilde{z}_c)^T$ & vector of truncated quantized sample,\\
  & where $\widetilde{z}_i=z_i\mathbbm{1}(c_s\rho+\sigma|\epsilon_j|\le\sqrt{\T_n}) \textrm{ for all }j=(i-1)\widetilde{n}+1, \dots i\widetilde{n} )$.\\
 $\widetilde{z}^0=(\widetilde{z}^0_1, \ldots, \widetilde{z}^0_c)^T$ & vector of truncated quantized response under $H_0: g_0=0$,\\
 & where $\widetilde{z}_i^0=z_i^0\mathbbm{1}(c_s\rho+\sigma|\epsilon_j|\le\sqrt{\T_n}) \textrm{ for all }j=(i-1)\widetilde{n}+1, \dots i\widetilde{n} )$.\\
 $y^\textrm{linear} = (y_1^\textrm{linear}, \ldots, y_n^\textrm{linear})^T$    &   new defined data for testing the linearity of $g_0$, which is defined as\\
 &  $y_i^\textrm{linear} = Q(y_i) - \widehat{g}(i/n)$, and $\widehat{g}(i/n)$ is  the least-square estimator of $g$.\\
 $z^\textrm{linear} = (z_1^\textrm{linear}, \ldots, z_c^\textrm{linear})^T$ & vector of quantized value of $y_i^\textrm{linear}$.\\
 $z_{0}^\textrm{linear} = (z_{i,0}^\textrm{linear}, \ldots, z_{i,c}^\textrm{linear})^T$ & quantized value of $y_i^\textrm{linear}$ under $H_0^\textrm{linear}: g_0 \textrm{ is linear}.$ \\
 $\lambda$ & smoothing parameter.\\
 $\{\varphi_i(x)\}_{i=1}^\infty$ & trigonometric basis functions.\\
 $K(\cdot, \cdot)$ & kernel function.\\
 $\Sigma_c$ & kernel matrix defined as $\Sigma_c=[K(i/c,i'/c)/c]_{1\le i,i'\le c}$.\\
 $\Omega_c$ & ``tensor'' of $K(\cdot, \cdot)$ defined as $\Omega_c=[K^{\otimes 2}(i/c,i'/c)/c]_{1\le i,i'\le c}$.\\
 $A$  & $A=(\Sigma_c+\lambda I_c)^{-1}\Omega_c(\Sigma_c+\lambda I_c)^{-1}$.\\
 $\zeta$ &  approximation error of Riemann sum and integral.\\
 $c_s$ & Sobolev constant defined as $c_s =\sup_{f\in S^m(\mathbb{I})} \frac{\|f\|_{\sup}}{\sqrt{J(f)}}$.\\
 $C_k(t)$ & maximum length of quantization interval.\\
 \hline
\end{tabular}
\caption{Table that lists some of the useful notations that are frequently used throughout the paper.}
\end{table}
\newpage
\section{Useful Lemmas}
The proofs of the theorems require some preliminary lemmas. In this section, we summarize these useful lemmas. Throughout the proof, we let $\widetilde{y}_i = \frac{1}{\widetilde{n}}\sum_{j=(i-1)\widetilde{n}+1}^{i\widetilde{n}}y_j, i=1, \ldots, c$ and we denote $\widehat{g}^{\textrm{ss}}$ as the canonical smoothing spline based on the full dataset; $\widehat{g}^{\textrm{ss}}_c$ as the smoothing spline based on the averaged responses {$\{\widetilde{y}_1,\cdots,\widetilde{y}_c\}$}, and $\widehat{g}^{\textrm{B}}_{\mu,t,c}$ as the desired $B$-bits estimator, i.e.,
\begin{align*}
	\widehat{g}^{\textrm{ss}} &=\argmin_{g\in S^{m}(\mathbb{I})}\frac{1}{n}\sum_{i=1}^n(y_i-g(i/n))^2+\lambda\int_0^1 [g^{(m)}(x)]^2dx, \\
    \widehat{g}^{\textrm{ss}}_c &=\argmin_{g\in S^m(\mathbb{I})}\frac{1}{c}\sum_{i=1}^c(\widetilde{y}_i-g(i/c))^2 + \lambda\int_0^1 [g^{(m)}(x)]^2dx, \,\,\, \widetilde{y}_i = \frac{1}{\widetilde{n}}\sum_{j=(i-1)\widetilde{n}+1}^{i\widetilde{n}}y_j,\\
    \widehat{g}^{\textrm{B}}_{\mu,t,c} &=\argmin_{g\in S^m(\mathbb{I})}\frac{1}{c}\sum_{i=1}^c(z_i-g(i/c))^2+\lambda\int_0^1 [g^{(m)}(x)]^2dx.
\end{align*}
The following lemma describes that the distance between
$\widehat{g}^{\textrm{B}}_{\mu,t,c}$ and $\widehat{g}^{\textrm{ss}}_c$ can be well controlled by {carefully choosing} quantization parameters $\mu, t$ and $c$.
\begin{lemma}\label{a:preliminary:thm}
For any $\mu=(\mu_1,\ldots,\mu_k)^T\in\bbR^k$ and $t=(t_1,\ldots,t_{k-1})^T\in\bbR^{k-1}$,
it holds that
\begin{equation}\label{upper:bound:diff}
\|\widehat{g}^{\textrm{B}}_{\mu,t,c}
-\widehat{g}^{\textrm{ss}}_c\|^2\le c^{-1}\sum_{i=1}^c(z_i-\widetilde{y}_i)^2.
\end{equation}
\end{lemma}
\begin{proof}
Recall that $\widehat{g}^{\textrm{B}}_{\mu,t,c}=\sum_{i=1}^c\widehat{\theta}_i K_{i/c}$, where $(\widehat{\theta}_1,\ldots,\widehat{\theta}_c)^T=c^{-1}(\Sigma_c+\lambda I_c)^{-1}z$ with $\Sigma_c=[K(i/c,i'/c)/c]_{1\le i,i'\le c}\in \mathbb{R}^{c\times c}$, $z=(z_1,\ldots,z_c)^T\in \mathbb{R}^c$, and $K(\cdot, \cdot)$ is the kernel function. Similarly, $\widehat{g}^{\textrm{ss}}_c = \sum_{i=1}^c\widetilde{\theta}_i K_{i/c}$, where  $(\widetilde{\theta}_1,\ldots,\widetilde{\theta}_c)^T=c^{-1}(\Sigma_c+\lambda I_c)^{-1}\widetilde{y}$ with $\widetilde{y}=(\widetilde{y}_1,\ldots,\widetilde{y}_c)^T$. Let $\widehat{\theta}=(\widehat{\theta}_1,\ldots,\widehat{\theta}_c)^T, \widetilde{\theta}=(\widetilde{\theta}_1,\ldots,\widetilde{\theta}_c)^T$. By direct calculations, we have
\begin{eqnarray*}
\widehat{g}_{\mu,t,c}^{\textrm{B}}
-\widehat{g}^{\textrm{ss}}_c&=&\sum_{\nu=1}^\infty\sum_{i=1}^c(\widehat{\theta}_i-\widetilde{\theta}_i)
\frac{\varphi_\nu(i/c)}{\gamma_\nu}\varphi_\nu =\sum_{\nu=1}^\infty\frac{\Phi_\nu^T(\widehat{\theta}-\widetilde{\theta)}}{\gamma_\nu}\varphi_\nu,
\end{eqnarray*}
where $\varphi_{2k-1}(x)=\sqrt{2}\cos(2\pi kx),\,\,\,\,
\varphi_{2k}(x)=\sqrt{2}\sin(2\pi kx)$ are the trigonometric basis functions, $\gamma_{2k-1}=\gamma_{2k}=(2\pi k)^{2m}$, and $\Phi_\nu=(\varphi_\nu(1/c),\varphi_\nu(2/c),\ldots,\varphi_\nu(c/c))^T$. So
\begin{eqnarray}\label{thm1:eqn1}
\|\widehat{g}^{\textrm{B}}_{\mu,t,c}
-\widehat{g}^{\textrm{ss}}_c\|^2&=&\sum_{\nu=1}^\infty\frac{|\Phi_\nu^T(\widehat{\theta}-\widetilde{\theta})|^2}{\gamma_\nu^2}
\nonumber\\
&=&(\widehat{\theta}-\widetilde{\theta})^T\sum_{\nu=1}^\infty\frac{\Phi_\nu\Phi_\nu^T}{\gamma_\nu^2}(\widehat{\theta}-\widetilde{\theta})
\nonumber\\
&=&c^{-1}(z-\widetilde{y})^T(\Sigma_c+\lambda I_c)^{-1}\Omega_c(\Sigma_c+\lambda I_c)^{-1}(z-\widetilde{y}).
\end{eqnarray}
We now look at $\Sigma_c$ and $\Omega_c$.
To ease our calculations, for $0\le l\le c-1$, we first define the following two notations,
\begin{eqnarray*}
d^{\prime}_l&=&\frac{2}{c}\sum_{k=1}^\infty\frac{\cos(2\pi k l/c)}{(2\pi k)^{2m}}, d_l=\frac{2}{c}\sum_{k=1}^\infty\frac{\cos(2\pi k l/c)}{(2\pi k)^{4m}}.
\end{eqnarray*}
Since $d^{\prime}_l=d^{\prime}_{c-l}$, $d_l=d_{c-l}$ for $l=1,2,\ldots,c-1$, we know
$\Sigma_c,  \Omega_c$ are both symmetric circulant of order $c$. Furthermore, $\Sigma_c$ and $\Omega_c$ share the same normalized eigenvectors as
\[
x_r=\frac{1}{\sqrt{c}}(1,\varepsilon^r,\varepsilon^{2r},\ldots,\varepsilon^{(c-1)r})^T,
\,\,r=0,1,\ldots,c-1,
\]
where $\varepsilon=\exp(2\pi\sqrt{-1}/c)$. Let $M=(x_0,x_1,\ldots,x_{c-1})$, and $M^\ast$ be the conjugate transpose of $M$.
Clearly, $MM^\ast=I_c$
and $\Sigma_c,\Omega_c$ admit the following decomposition
\begin{equation}\label{Sigma:Sigma2:decompose}
\Sigma_c=M\Lambda_{d^{\prime}} M^\ast,\,\,\Omega_c=M\Lambda_dM^\ast,
\end{equation}
where $\Lambda_{d^{\prime}}=\textrm{diag}(\lambda_{d^{\prime},0},\lambda_{d^{\prime},1},\ldots,\lambda_{d^{\prime},c-1})$
and $\Lambda_d=\textrm{diag}(\lambda_{d,0},\lambda_{d,1},\ldots,\lambda_{d,c-1})$
with $\lambda_{d^{\prime},l}=d^{\prime}_0+d^{\prime}_1\varepsilon^l+\ldots+d^{\prime}_{c-1}\varepsilon^{(c-1)l}$
and $\lambda_{d,l}=d_0+d_1\varepsilon^l+\ldots+d_{c-1}\varepsilon^{(c-1)l}$.

Direct calculations show that
\begin{equation}\label{formula:lambda:lc}
\lambda_{d^{\prime},l}=\left\{
\begin{array}{cc}
2\sum_{k=1}^\infty\frac{1}{(2\pi k c)^{2m}},& l=0, \nonumber\\
\sum_{k=1}^\infty\frac{1}{[2\pi(kc-l)]^{2m}}+\sum_{k=0}^\infty\frac{1}{[2\pi(kc+l)]^{2m}}, & 1\le l\le c-1
\end{array}\right.
\end{equation}
\begin{equation}\label{formula:lambda:ld}
\lambda_{d,l}=\left\{
\begin{array}{cc}
2\sum_{k=1}^\infty\frac{1}{(2\pi k c)^{4m}},& l=0, \nonumber\\
\sum_{k=1}^\infty\frac{1}{[2\pi(kc-l)]^{4m}}
+\sum_{k=0}^\infty\frac{1}{[2\pi(kc+l)]^{4m}},
& 1\le l\le c-1.
\end{array}\right.
\end{equation}
It is easy to examine that
\begin{equation}\label{expression:lambda:0}
\lambda_{d^{\prime},l}=\left\{
\begin{array}{cc}
2\bar{d^{\prime}}_m(2\pi c)^{-2m},& l=0,\\
\frac{1}{[2\pi(c-l)]^{2m}}+\frac{1}{(2\pi l)^{2m}} + \sum_{k=2}^\infty\frac{1}{[2\pi(kc-l)]^{2m}}
+\sum_{k=1}^\infty\frac{1}{[2\pi(kc+l)]^{2m}}, & 1\le l\le c-1,
\end{array}\right.
\end{equation}

\begin{equation}\label{expression:lambda:l}
\lambda_{d,l}=\left\{
\begin{array}{cc}
2\bar{d}_m(2\pi c)^{-4m},& l=0,\\
\frac{1}{[2\pi(c-l)]^{4m}}+\frac{1}{(2\pi l)^{4m}}+\sum_{k=2}^\infty\frac{1}{[2\pi(kc-l)]^{4m}}
+\sum_{k=1}^\infty\frac{1}{[2\pi(kc+l)]^{4m}}, & 1\le l\le c-1,
\end{array}\right.
\end{equation}
\begin{eqnarray*}
&&\underline{d}^{\prime}_m(2\pi c)^{-2m}\le
\sum_{k=2}^\infty\frac{1}{[2\pi(kc-l)]^{2m}}\le \bar{d^{\prime}}_m(2\pi c)^{-2m}, \\
&&\underline{d}^{\prime}_m(2\pi c)^{-2m}\le
\sum_{k=1}^\infty\frac{1}{[2\pi(kc+l)]^{2m}}\le\bar{d}^{\prime}_m(2\pi c)^{-2m},\\
&&\underline{d}_m(2\pi c)^{-4m}\le
\sum_{k=2}^\infty\frac{1}{[2\pi(kc-l)]^{4m}}\le \bar{d}_m(2\pi c)^{-4m}, \\
&&\underline{d}_m(2\pi c)^{-4m}\le
\sum_{k=1}^\infty\frac{1}{[2\pi(kc+l)]^{4m}}\le\bar{d}_m(2\pi c)^{-4m},
\end{eqnarray*}
where
\begin{align} \label{cmdm}
\bar{d}^{\prime}_m:=\sum_{k=1}^\infty k^{-2m},
\underline{d}^{\prime}_m:=\sum_{k=2}^\infty k^{-2m},
\bar{d}_m:=\sum_{k=1}^\infty k^{-4m},
\underline{d}_m:=\sum_{k=2}^\infty k^{-4m}.
\end{align}
It follows from (\ref{expression:lambda:0}) and (\ref{expression:lambda:l}) that
$\lambda_{d,l}\le \lambda_{d^{\prime},l}^2$ for $0\le l\le c-1$.
Therefore,
\begin{eqnarray*}
&&(z-\widetilde{y})^T(\Sigma_c+\lambda I_c)^{-1}\Omega_c(\Sigma_c+\lambda I_c)^{-1}(z-\widetilde{y})\\
&=&(z-\widetilde{y})^TM\textrm{diag}\left(\frac{\lambda_{d,0}}{(\lambda+\lambda_{d^{\prime},0})^2},
\ldots,\frac{\lambda_{d,c-1}}{(\lambda+\lambda_{d^{\prime},c-1})^2}\right)M^\ast(z-\widetilde{y})\\
&\le& (z-\widetilde{y})^TMM^\ast(z-\widetilde{y})=
(z-\widetilde{y})^T(z-\widetilde{y}).
\end{eqnarray*}
Therefore, it follows by (\ref{thm1:eqn1}) that
\begin{equation}
\begin{split}
\|\widehat{g}^{\textrm{B}}_{\mu,t,c}
-\widehat{g}^{\textrm{ss}}_c\|^2\le c^{-1}(z-\widetilde{y})^T(z-\widetilde{y})=c^{-1}\sum_{i=1}^c(z_i-\widetilde{y}_i)^2.
\label{approxerr}
\end{split}
\end{equation}
This completes the proof.
\end{proof}

\begin{lemma}\label{hat_tau_k}
Suppose Condition (\textbf{B}) holds true, and it holds that $C_k(t)\rightarrow{0}$, then we have $\tau_k^2 = Var(z_1|H_0) = O(\widetilde{n}^{-1} + C_k(t)^2)$ and $\widehat{\tau}_k^2= \frac{\widetilde\tau_n^2}{2\widetilde{n}(n-1)} = \tau_k^2\left[1+O_p(n^{-1/2}+C_k(t))\right] = \tau_k^2[1+o_p(1)]$.
\end{lemma}
\begin{proof}
By the definition of $\tau_k^2$ and (\ref{eq:quant:2}) we have
\begin{align}\label{eq_tau_k}
\tau_k^2 &=  Var(z_1|H_0) = \frac{1}{\widetilde n ^2}Var(\sum_{i=1}^{\widetilde{n}}Q(y_i)|H_0) + O(C_k(t)^2) = \frac{1}{\widetilde n }Var(Q(\sigma\epsilon_1)) + O(C_k(t)^2) \nonumber \\
&=\frac{1}{\widetilde n }\sum_{j=1}^k\mu_j^2P\left(Q(\sigma\epsilon_1)=\mu_j\right) - \frac{1}{\widetilde n }\left(\sum_{j=1}^k\mu_jP\left(Q(\sigma\epsilon_1)=\mu_j\right)\right)^2 + O(C_k(t)^2)\nonumber\\
&=\frac{1}{\widetilde n }\sum_{j=1}^k\mu_j^2P\left(\sigma\epsilon_1\in R_j(t)\right) - \frac{1}{\widetilde n }\left(\sum_{j=1}^k\mu_jP\left(\sigma\epsilon_1\in R_j(t)\right)\right)^2 + O(C_k(t)^2) \nonumber \\
&=R_1 + R_2 + O(C_k(t)^2).
\end{align}

Assume that for $2 \leq s \leq k-1, t_1<t_2<\cdots<t_{s-1} \leq 0<t_s<\cdots<t_{k-1}$ and let $p(\epsilon)$ be the density function of $\epsilon_1$. Then we have
$$
\begin{aligned}
\sum_{j=2}^{s-1} \mu_j^2 P\left(\sigma \epsilon_1 \in R_j(t)\right) & \leq \sum_{j=2}^{s-1} \int_{t_{j-1} / \sigma}^{t_j / \sigma} p(\epsilon) d \epsilon t_{j-1}^2 \\
& \leq 2 \sum_{j=2}^{s-1} \int_{t_{j-1} / \sigma}^{t_j / \sigma} p(\epsilon) d \epsilon\left(t_j^2+C_k\left(t\right)^2\right) \\
& \leq 2 \sigma^2 \sum_{j=2}^{s-1} \int_{t_{j-1} / \sigma}^{t_j / \sigma} \epsilon^2 p(\epsilon)d\epsilon+2 C_k(t)^2 \sum_{j=2}^{s-1} \int_{t_{j-1} / \sigma}^{t_j / \sigma} p(\epsilon) d \epsilon
\end{aligned}
$$
and 
$$
\begin{aligned}
\sum_{j=s+1}^{k-1} \mu_j^2 P\left(\sigma \epsilon_1 \in R_j(t)\right) & \leq \sum_{j=s+1}^{k-1} \int_{t_{j-1} / \sigma}^{t_j / \sigma} p(\epsilon) d \epsilon t_j^2 \\
& \leq 2 \sum_{j=s+1}^{k-1} \int_{t_{j-1} / \sigma}^{t_j / \sigma} p(\epsilon) d \epsilon\left(t_{j-1}^2+C_k(t)^2\right) \\
& \leq 2 \sigma^2 \sum_{j=s+1}^{k-1} \int_{t_{j-1} / \sigma}^{t_j / \sigma} \epsilon^2 p(\epsilon) d \epsilon+2 C_k(t)^2 \sum_{j=s+1}^{k-1} \int_{t_{j-1} / \sigma}^{t_j / \sigma} p(\epsilon) d \epsilon .
\end{aligned}
$$

The fact that $\left|\mu_s\right| \leq C_k(t)$ and the above inequalities lead
$$
\begin{aligned}
R_1 = & \frac{1}{\widetilde{n}}\Bigg\{\sum_{j=2}^{s-1} \mu_j^2 P\left(\sigma \epsilon_1 \in R_j(t)\right)+\sum_{j=s+1}^{k-1} \mu_j^2 P\left(\sigma \epsilon_1 \in R_j(t)\right) \\
& +\mu_1^2 P\left(\sigma \epsilon_1 \in R_1(t)\right)+\mu_k^2 P\left(\sigma \epsilon_1 \in R_k(t)\right)+\mu_s^2 P\left(\sigma \epsilon_1 \in R_s(t)\right)\Bigg\} \\
\leq & \frac{1}{\widetilde{n}}\left\{2 \sigma^2 \int_{\mathbb{R}} \epsilon^2 p(\epsilon) d \epsilon+3 C_k(t)^2+O(1)\right\} .
\end{aligned}
$$
This proves $R_1 \lesssim \frac{1}{\widetilde{n}}$.
On the other hand, by $t_1^2>\sigma^2$ and $t_{s-1}=O\left(C_k(t)\right)=o(1)$, we have
$$
\begin{aligned}
R_1 & \geq \frac{1}{\widetilde{n}}\sum_{j=2}^{s-1} \mu_j^2 P\left(\sigma \epsilon_1 \in R_j(t)\right) \\
& \geq \frac{1}{\widetilde{n}}\sum_{j=2}^{s-1} t_j^2 \int_{t_{j-1} / \sigma}^{t_j / \sigma} p(\epsilon) d \epsilon \\
& \geq \frac{1}{\widetilde{n}}\sum_{j=2}^{s-1}\left(t_{j-1}^2 / 2-C_k(t)^2\right) \int_{t_{j-1} / \sigma}^{t_j / \sigma} p(\epsilon) d \epsilon \\
& \geq \frac{1}{\widetilde{n}}\left\{\frac{\sigma^2}{2} \sum_{j=2}^{s-1} \int_{t_{j-1} / \sigma}^{t_j / \sigma} \epsilon^2 p(\epsilon) d \epsilon-C_k(t)^2 \sum_{j=2}^{s-1} \int_{t_{j-1} / \sigma}^{t_j / \sigma} p(\epsilon) d \epsilon \right\}\\
& \geq \frac{1}{\widetilde{n}}\left\{\frac{\sigma^2}{2} \int_{t_1 / \sigma}^{t_{s-1} / \sigma} \epsilon^2 p(\epsilon) d \epsilon-C_k(t)^2\right\} = O(\widetilde{n}^{-1}) .
\end{aligned}
$$
This proves $R_1 \gtrsim \frac{1}{\widetilde{n}}$, which implies $R_1 = O(\widetilde{n}^{-1})$. Using a similar approach, we can prove $R_2 = O(\widetilde{n}^{-1})$. From (\ref{eq_tau_k}), we get $\tau_k^2 = O(\widetilde{n}^{-1} + C_k(t)^2)$.

Now we prove $\widehat{\tau}_k^2 = \frac{\widetilde\tau_n^2}{2\widetilde{n}(n-1)}=\tau_k^2[1+o_p(1)]$. By the definition of $\widetilde\tau_n$, we have
\begin{align}
    \frac{\widetilde\tau_n^2}{2\widetilde{n}(n-1)}&
=    \frac{1}{2\widetilde{n}(n-1)}\sum_{i=2}^n(Q(y_i)- 
                         Q(y_{i-1}))^2\nonumber \\
                         &=  \frac{1}{2\widetilde{n}(n-1)}\sum_{i=2}^n\left\{Q(g_0(i/n)+\sigma\epsilon_i)-Q(g_0((i-1)/n)+\sigma\epsilon_{i-1}) \right\}^2\nonumber\\
                         &=\frac{1}{2\widetilde{n}(n-1)}\sum_{i=2}^n\left\{g_0(i/n)-g_0((i-1)/n)+Q(\sigma\epsilon_i)-Q(\sigma\epsilon_{i-1}) + \widetilde{\delta}_i\right\}^2\nonumber\\
                         &=\frac{1}{2\widetilde{n}(n-1)}\sum_{i=2}^n\left\{Q(\sigma\epsilon_i)-Q(\sigma\epsilon_{i-1})\right\}^2+\frac{1}{2\widetilde{n}(n-1)}\sum_{i=2}^n\left\{g_0(i/n)-g_0((i-1)/n) + \widetilde{\delta}_i\right\}^2\nonumber\\
                         &\qquad+\frac{1}{\widetilde{n}(n-1)}\sum_{i=2}^n\left\{Q(\sigma\epsilon_i)-Q(\sigma\epsilon_{i-1})\right\}\left\{g_0(i/n)-g_0((i-1)/n) + \widetilde{\delta}_i\right\},\nonumber
\end{align}
where $|\widetilde\delta_i|=O(C_k(t))$ for $i=1,\cdots,n$. Note that, by the central limit theorem, it holds that
\[
\frac{1}{2(n-1)}\sum_{i=2}^n\left\{Q(\sigma\epsilon_i)-Q(\sigma\epsilon_{i-1})\right\}^2= Var\left[Q(\sigma\epsilon_1)\right]+O_p(n^{-1/2}),
\]
and that $g_0(i/n)-g_0((i-1)/n)=O(1/n)$ by the smoothness of function $g_0$, we have that
\begin{align}
\frac{\widetilde\tau_n^2}{2\widetilde{n}(n-1)}&=\frac{1}{\widetilde n }\left[Var(Q(\sigma\epsilon_1))+O(n^{-2}+C_k(t)^2) + O_p(n^{-1/2})+O(n^{-1}+C_k(t))\right]\nonumber\\&=\tau_k^2\left[1+O_p(n^{-1/2}+C_k(t))\right], \label{eq_proof_variance_first}
\end{align}
which completes the proof.
\end{proof}

\begin{lemma}\label{hat_tau_k_linear}
Suppose Condition (\textbf{B}) holds true, and it holds that $C_k(t)\rightarrow{0}$. Let $\widehat{\tau}_k^2$ be the quantied variance based on $y^{\textrm{linear}}$, then we have that $\widehat{\tau}_k^2= \frac{\widetilde\tau_n^2}{2\widetilde{n}(n-1)} = \tau_k^2\left[1+O_p(n^{-1/2}+C_k(t))\right] = \tau_k^2[1+o_p(1)]$.
\end{lemma}
\begin{proof}
By the definition of $\widehat{\tau}_k^2 $, we have
\begin{align}
    \frac{\widetilde\tau_n^2}{2\widetilde{n}(n-1)}&
=    \frac{1}{2\widetilde{n}(n-1)}\sum_{i=2}^n(Q(y_i^{\textrm{linear}})- 
                         Q(y^{\textrm{linear}}_{i-1}))^2\nonumber \\
                         &=  \frac{1}{2\widetilde{n}(n-1)}\sum_{i=2}^n\left\{Q[Q(g_0(i/n)+\sigma\epsilon_i) - \widehat{y}_i] -Q[Q(g_0((i-1)/n)+\sigma\epsilon_{i-1})] -\widehat{y}_{i-1} \right\}^2\nonumber\\
                         &=\frac{1}{2\widetilde{n}(n-1)}\sum_{i=2}^n\left\{g_0(i/n)-g_0((i-1)/n) + \widehat{y}_i -\widehat{y}_{i-1} + Q(\sigma\epsilon_i)-Q(\sigma\epsilon_{i-1}) + \widetilde{\delta}_i\right\}^2,\nonumber
\end{align}
where $|\widetilde\delta_i|= O(C_k(t))$ for $i=1,\cdots,n$. Note that $\widehat{g} \in \mathcal{L}(\mathbb{I})$, one has that $|\widehat{y}_{i} - \widehat{y}_{i-1}| = O_p(1/n)$ by the smoothness of $\widehat{g}$. Similar to the proof in Lemma \ref{hat_tau_k}, we get $\widehat{\tau}_k^2= \frac{\widetilde\tau_n^2}{2\widetilde{n}(n-1)} = \tau_k^2\left[1+O_p(n^{-1/2}+C_k(t))\right] = \tau_k^2[1+o_p(1)]$.
\end{proof}

To ease calculation, we define some useful notations. Let $z_i^0$ be the quantized data conditional on $g_0(x) = 0$ and $z^0 = (z_1^0,\ldots,z_c^0)^T$. According to (\ref{eq:quant:2}), we have

\begin{equation} \label{eq:quant_z0}
\left|z_i^0 - \frac{1}{\widetilde{n}}\sum_{j=(i-1)\widetilde{n}+1}^{i\widetilde{n}}Q(\sigma\epsilon_j)\right|\le C_k(t),\,\, \textrm{ for $i=1,\ldots,c$.}
\end{equation}

Furthermore, we let $\widetilde{z}^0=(\widetilde{z}_1^0,\ldots,\widetilde{z}_c^0)^T, \widetilde{z}=(\widetilde{z}_1,\ldots,\widetilde{z}_c)^T$, where for $i=1, \ldots, c,$
\begin{eqnarray*}
\widetilde{z}_i^0=z_i^0\mathbbm{1}(c_s\rho + \sigma|\epsilon_j|\le\sqrt{\T_n} \textrm{ for all }j=(i-1)\widetilde{n}+1, \dots i\widetilde{n} ), \\
\widetilde{z}_i=z_i\mathbbm{1}(c_s\rho + \sigma|\epsilon_j|\le\sqrt{\T_n} \textrm{ for all }j=(i-1)\widetilde{n}+1, \dots i\widetilde{n} ). 
\end{eqnarray*}

\begin{lemma}\label{power:thm:lemma:1} Suppose $g$ is the regression function generating the samples. Suppose Condition (\textbf{B}) holds
and $\sigma|\epsilon_j|+c_s\rho\le\sqrt{\T_n}$ holds for all $j=1,\ldots,n$. Then
for any $g\in S^m(\mathbb{I})$ with $J(g)\le\rho^2$, it holds that
$|\widetilde{z}_i-\widetilde{z}_i^0-f(i/c)|\le 4C_k(t) + \zeta, i=1, \ldots, c$,
where $f$ is the corresponding integral equation defined in (\ref{eq:f0}), $\zeta= \max\limits_{i = 1,\ldots,c}|f(i/c) - \frac{1}{\widetilde{n}}\sum_{j=(i-1)\widetilde{n}+1}^{i\widetilde{n}}g(j/n)| = \max\limits_{i = 1,\ldots,c}|\frac{1}{2\Delta}\int_{\max(i/c-\Delta, 0)}^{\min(i/c+\Delta, 1)}g(s)ds - \frac{1}{\widetilde{n}}\sum_{j=(i-1)\widetilde{n}+1}^{i\widetilde{n}}g(j/n)|$ and $\Delta=\frac{1}{c}$.
\end{lemma}

\begin{proof}\textbf{\hspace*{-1mm}:}
Suppose $\sigma\epsilon_i\in R_j(t)$ for some $1\le j\le k$. Since $\min\{t_1^2,t_{k-1}^2\}=\T_n$ and
$c_s\rho + \sigma|\epsilon_i|\le\sqrt{\T_n}$, we must have $2\le j\le k-1$.
Suppose that $g(i/n)+\sigma\epsilon_i\in R_l(t)$ for some $1\le l\le k$.
Since $\min\{t_1^2,t_{k-1}^2\}=\T_n$ and by (\ref{Sob:const}) implying
$|g(i/n)|\le c_s\rho$, we have
\[
|y_i| = |g(i/n)+\sigma\epsilon_i|\le |g(i/n)|+|\sigma\epsilon_i|\le c_s\rho+|\sigma\epsilon_i|\le\sqrt{\T_n} = \min\{|t_1|,|t_{k-1}|\}.
\]
Therefore, $2\le l\le k-1$.
Since
\[
t_{j-1}\le\sigma\epsilon_i<t_j,\,\,\,\,
t_{l-1}\le g(i/n) + \sigma\epsilon_i<t_l,
\]
\[
t_{j-1}\le\mu_j<t_j,\,\,\,\,
t_{l-1}\le \mu_l<t_l,
\]
we have
\[
t_{l-1}-t_j<\mu_l-\mu_j<t_l-t_{j-1},
\]
\[
t_{l-1}-t_j<g(i/n)<t_l-t_{j-1}.
\]
Hence it holds that
\begin{equation}\label{eq:tmp1}
|Q(y_i) - Q(\sigma\epsilon_i) - g(i/n)| = |\mu_l-\mu_j-g(i/n)| \le|t_l-t_{l-1}|+|t_j-t_{j-1}|\le 2C_k(t).    
\end{equation}
Since $c_s\rho + \sigma|\epsilon_j|\le\sqrt{\T_n}$ for all $j=1, \ldots, n$, the result follows from (\ref{eq:quant:2}) and (\ref{eq:quant_z0}) that
\begin{align*}
|\widetilde{z}_i-\widetilde{z}_i^0-f(i/c)| =& \Big|z_i - \frac{\sum_{j=(i-1)\widetilde{n}+1}^{i\widetilde{n}}Q(y_j)}{\widetilde{n}} - \left(z_i^0 - \frac{\sum_{j=(i-1)\widetilde{n}+1}^{i\widetilde{n}}Q(\sigma\epsilon_j)}{\widetilde{n}}\right)\\
&+  \frac{\sum_{j=(i-1)\widetilde{n}+1}^{i\widetilde{n}}Q(y_j)}{\widetilde{n}} - \frac{\sum_{j=(i-1)\widetilde{n}+1}^{i\widetilde{n}}Q(\sigma\epsilon_j)}{\widetilde{n}} - f(i/c)\Big|\\
\le&\frac{\left|\sum_{j=(i-1)\widetilde{n}+1}^{i\widetilde{n}}\left((Q(y_j) - Q(\sigma\epsilon_j)\right)-\widetilde{n}f(i/c)\right|}{\widetilde{n}}+2C_k(t)\\
=& \frac{\left|\sum_{j=(i-1)\widetilde{n}+1}^{i\widetilde{n}}\left\{Q(y_j) - Q(\sigma\epsilon_j) - g(j/n)\right\} + \sum_{j=(i-1)\widetilde{n}+1}^{i\widetilde{n}}g(j/n)-\widetilde{n}f(i/c)\right|}{\widetilde{n}}+2C_k(t)\\
\le&\frac{\left|\sum_{j=(i-1)\widetilde{n}+1}^{i\widetilde{n}}\left\{Q(y_j) - Q(\sigma\epsilon_j) - g(j/n)\right\}\right| + \left|\sum_{j=(i-1)\widetilde{n}+1}^{i\widetilde{n}}g(j/n)-\widetilde{n}f(i/c)\right|}{\widetilde{n}}+2C_k(t)\\
\le& 4C_k(t) + \zeta,
\end{align*}
where the last inequality follows from (\ref{eq:tmp1}) and the definition of $\zeta$.
\end{proof}

\begin{lemma}\label{power:thm:lemma:2}
Suppose Condition (\textbf{B}) holds, and  $h\rightarrow0$, $ch\rightarrow\infty$. Then for any $g\in S^m(\mathbb{I})$ with $J(g)\le\rho^2$, we have
\begin{equation}\label{lemma:2:result}
\sup_{g\in S_\rho^m(\mathbb{I})}\frac{E\{|(\widetilde{z}-\widetilde{z}^0)^TAz^0|^2\}}
{(1+(ch^2)^{-1})(\tau_k^2+ 4C_k(t)^2)\sum_{i=1}^c(|f(i/c)|+4C_k(t) + \zeta)^2}\le8,\,\,\textrm{as $c\rightarrow\infty$},
\end{equation}
where $\zeta = \max\limits_{i = 1,\ldots,c}|\frac{1}{2\Delta}\int_{\max(i/c-\Delta, 0)}^{\min(i/c+\Delta, 1)}g(s)ds - \frac{1}{\widetilde{n}}\sum_{j=(i-1)\widetilde{n}+1}^{i\widetilde{n}}g(j/n)|$ and $\Delta=\frac{1}{c}$.
\end{lemma}
\begin{proof}\textbf{\hspace*{-1mm}:}
For convenience, let $\omega_i=\widetilde{z}_i-\widetilde{z}_i^0$.
From Lemma \ref{power:thm:lemma:1} and the fact that $\widetilde{z}_i - \widetilde{z}_i^0= 0$ if $c_s\rho+\sigma|\epsilon_j|>\sqrt{\T_n}$ for some $(i-1)\widetilde{n} + 1\le j \le i\widetilde{n}$, it holds that
\begin{equation}\label{power:lemma:2:eqn:1}
\textrm{$|\omega_i|\le|f(i/c)|+4C_k(t)+\zeta$ for all $1\le i\le c$.}
\end{equation}
According to (\ref{eq:quant_z0}) and the fact that 
\[
\left|E\left(\frac{1}{\widetilde{n}}\sum_{j=(i-1)\widetilde{n}+1}^{i\widetilde{n}}Q(\sigma\epsilon_j)\right)\right| \le E\left(\frac{1}{\widetilde{n}}\sum_{j=(i-1)\widetilde{n}+1}^{i\widetilde{n}}\sigma\epsilon_j\right) + C_k(t),
\]
one has that
\[
|E(z_i^0)| \le \left|E\left(\frac{1}{\widetilde{n}}\sum_{j=(i-1)\widetilde{n}+1}^{i\widetilde{n}}Q(\sigma\epsilon_j)\right)\right| + C_k(t) \le 2C_k(t),
\]
which further implies that 
\[
E(|z_i^0|^2) = Var(z_i^0) + |E(z_i^0)|^2 \le \tau_k^2 + 4C_k(t)^2.
\]
For any $g\in S^m(\mathbb{I})$ with $J(g)\le\rho^2$,
we have
\begin{eqnarray*}
&&E\{|(\widetilde{z}-\widetilde{z}^0)^TAz^0|^2\}\\
&=&E\{(\sum_{u,v=1}^c a_{u,v}\omega_u z_v^0)^2\}\\
&=&\sum_{1\le u_1,u_2,v_1,v_2\le c}a_{u_1,v_1}a_{u_2,v_2}E\{\omega_{u_1}\omega_{u_2}z_{v_1}^0z_{v_2}^0\}\\
&=&\sum_{\substack{u_1,u_2,v_1\\\textrm{are mutually different}}}
a_{u_1,v_1}a_{u_2,v_1}E\{\omega_{u_1}\}E\{\omega_{u_2}\}E\{|z_{v_1}^0|^2\}
+\sum_{u_1\neq v_1}a_{u_1,v_1}^2E\{\omega_{u_1}^2\}E\{|z_{v_1}^0|^2\}\\
&&+\sum_{u_1\neq u_2}a_{u_1,u_1}a_{u_2,u_2}E\{\omega_{u_1}z_{u_1}^0\}E\{\omega_{u_2}z_{u_2}^0\}
+\sum_{u_1\neq u_2}a_{u_1,u_1}a_{u_2,u_1}E\{\omega_{u_1}|z_{u_1}^0|^2\}E\{\omega_{u_2}\}\\
&&+\sum_{u_1}a_{u_1,u_1}^2E\{\omega_{u_1}^2|z_{u_1}^0|^2\}\equiv T_1+T_2+T_3+T_4+T_5.
\end{eqnarray*}
To complete the proof, we will analyze the above terms $T_1$ through $T_5$.

For $T_1$, we have
\begin{eqnarray*}
T_1&=&(\tau_k^2+ 4C_k(t)^2)\sum_{u_1\neq u_2}\sum_{v_1\neq u_1,u_2}a_{u_1,v_1}a_{u_2,v_1}E\{\omega_{u_1}\}E\{\omega_{u_2}\}\\
&\le&(\tau_k^2+ 4C_k(t)^2)\sum_{u_1,u_2=1}^n\left(\sum_{v_1\neq u_1,u_2}a_{u_1,v_1}a_{u_2,v_1}\right)E\{\omega_{u_1}\}E\{\omega_{u_2}\}\\
&=&(\tau_k^2+ 4C_k(t)^2)E\{x\}^T(A-A_0)^2E\{x\}\\
&\le&2(\tau_k^2+ 4C_k(t)^2)E\{x\}^T(A^2+A_0^2)E\{x\},
\end{eqnarray*}
where recall $A_0=\textrm{diag}(a_{1,1},\ldots,a_{c,c})$.
Since $A\le I_c$ and $a_{1,1}=\cdots=a_{c,c}\asymp 1/(ch)=o(1)$,
we have $A^2+A_0^2\le 2I_c$ (as $c\rightarrow\infty$), which, together with (\ref{power:lemma:2:eqn:1}), further leads to
\begin{eqnarray*}
T_1&\le& 4(\tau_k^2+ 4C_k(t)^2) E\{x\}^TE\{x\}\\
&\le& 4(\tau_k^2+ 4C_k(t)^2)\sum_{i=1}^c(|f(i/c)|+4C_k(t)+\zeta)^2\\&\le& 4(1+(ch^2)^{-1})(\tau_k^2+ 4C_k(t)^2)\sum_{i=1}^c(|f(i/c)|+4C_k(t)+\zeta)^2.
\end{eqnarray*}
For $T_2$, we have
\begin{eqnarray*}
T_2&=&(\tau_k^2+ 4C_k(t)^2)\sum_{u_1=1}^c\left(\sum_{v_1\neq u_1}a_{u_1,v_1}^2\right)E\{\omega_{u_1}^2\}\\
&\le&(\tau_k^2+ 4C_k(t)^2)\sum_{u_1=1}^c\left(\sum_{v_1=1}^ca_{u_1,v_1}^2\right)(|f_{u_1}|+4C_k(t)+\zeta)^2\\
&\le&(1+(ch^2)^{-1})(\tau_k^2+ 4C_k(t)^2)\sum_{i=1}^c(|f(i/c)|+4C_k(t)+\zeta)^2,
\end{eqnarray*}
where the last inequality follows from
\begin{eqnarray*}
\sum_{v=1}^ca_{i,v}^2=\frac{1}{c}\sum_{r=0}^{c-1}\frac{\lambda_{d,r}^2}{(\lambda+\lambda_{d^\prime,r})^4}
\lesssim (ch)^{-1}\rightarrow0.
\end{eqnarray*}
Here the above ``$\lesssim$'' is uniformly of $1\le i\le c$.

For $T_3$,  Cauchy inequality implies that
\begin{eqnarray*}
T_3&\le&\left(\sum_{i=1}^ca_{i,i}E\{\omega_i z_i^0\}\right)^2\\
&\le&c\sum_{i=1}^ca_{i,i}^2|E\{\omega_i z_i^0\}|^2\\
&\le&c(\tau_k^2+ 4C_k(t)^2)\sum_{i=1}^ca_{i,i}^2(|f(i/c)|+4C_k(t)+\zeta)^2\\
&\le&(1+(ch^2)^{-1})(\tau_k^2+ 4C_k(t)^2)\sum_{i=1}^c(|f(i/c)|+4C_k(t)+\zeta)^2,
\end{eqnarray*}
where the last inequality follows from
$ca_{1,1}^2=\ldots=ca_{c,c}^2\asymp (ch^2)^{-1}$, as $c\rightarrow\infty$.

For $T_4$, we have
\begin{eqnarray*}
T_4&=&\sum_{i\neq v}a_{i,i}a_{v,i}E\{\omega_i|z_i^0|^2\}E\{\omega_v\}\\
&\le&\sum_{i=1}^c a_{i,i} E\{|\omega_i|\cdot |z_i^0|^2\}\sum_{v\neq i}|a_{v,i}| E\{|\omega_v|\}\\
&\lesssim&\frac{\tau_k^2+ 4C_k(t)^2}{ch}\sum_{i=1}^c(|f(i/c)|+4C_k(t)+\zeta)\sqrt{\sum_{v=1}^c a_{v,i}^2}
\sqrt{\sum_{i=1}^c(|f(i/c)|+4C_k(t)+\zeta)^2}\\
&\lesssim&\frac{\tau_k^2+ 4C_k(t)^2}{(ch)^{3/2}}\sqrt{c}\sum_{i=1}^c(|f(i/c)|+4C_k(t)+\zeta)^2\\
&\lesssim&\frac{\tau_k^2+ 4C_k(t)^2}{ch^{3/2}}\sum_{i=1}^c(|f(i/c)|+4C_k(t)+\zeta)^2\\
&\lesssim&\frac{\tau_k^2+ 4C_k(t)^2}{ch^{2}}\sum_{i=1}^c(|f(i/c)|+4C_k(t)+\zeta)^2\\
&\lesssim&(1+(ch^2)^{-1})(\tau_k^2+ 4C_k(t)^2)\sum_{i=1}^c(|f(i/c)|+4C_k(t)+\zeta)^2.
\end{eqnarray*}

For $T_5$, it holds that
\begin{eqnarray*}
T_5&=&\sum_{i=1}^c a_{i,i}^2E\{\omega_i^2|z_i^0|^2\}\\
&\lesssim&\frac{\tau_k^2+ 4C_k(t)^2}{(ch)^2}\sum_{i=1}^c(|f(i/c)|+4C_k(t)+\zeta)^2\\
&\le& (1+(ch^2)^{-1})(\tau_k^2+ 4C_k(t)^2)\sum_{i=1}^c(|f(i/c)|+4C_k(t)+\zeta)^2.
\end{eqnarray*}
From the above analysis of $T_1$ through $T_5$, we get that as $c\rightarrow\infty$, 
for any $g\in S^m(\mathbb{I})$ with $J(g)\le\rho^2$, it follows that
\[
E\{|(\widetilde{z}-\widetilde{z}^0)^TAz^0|^2\}\le 8(1+(ch^2)^{-1})(\tau_k^2+ 4C_k(t)^2)\sum_{i=1}^c(|f(i/c)|+4C_k(t)+\zeta)^2.
\]
This proves the desired result.
\end{proof}
For $\nu=1,2,\ldots,c$, define $\Phi_\nu=(\varphi_\nu(1/c),\varphi_\nu(2/c),\ldots,\varphi_\nu(c/c))^T$.
Let $\varepsilon=\exp(2\pi\sqrt{-1}/c)$,
\[
x_r=\frac{1}{\sqrt{c}}(1,\varepsilon^r,\varepsilon^{2r},\ldots,\varepsilon^{(c-1)r})^T,
\,\,r=0,1,\ldots,c-1,
\]
and $x_r^\ast $ be the conjugate transpose of $x_r$.
\begin{lemma}\label{power:thm:lemma:3}
For $0\le r\le c-1$ and $1\le v\le c-1$, one has that
\begin{eqnarray*}
x_r^\ast\Phi_{2(pc+v)-1}&=&\sqrt{\frac{c}{2}}\left(\varepsilon^v I(r=v)+\varepsilon^{-v}I(r+v=c)\right),\\
x_r^\ast\Phi_{2(pc+v)}&=&\sqrt{-\frac{c}{2}}\left(\varepsilon^v I(r=v)-\varepsilon^{-v}I(r+v=c)\right);
\end{eqnarray*}
and
\begin{eqnarray*}
x_r^\ast\Phi_{2(pc+c)-1}&=&\sqrt{\frac{c}{2}}I(r=0),\\
x_r^\ast\Phi_{2(pc+c)}&=&0.
\end{eqnarray*}
\end{lemma}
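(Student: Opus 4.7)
The plan is to reduce each inner product to a standard geometric sum over $n$-th roots of unity. The key observation is that periodicity eliminates the $p$-dependence: for any integer $p$ and any integer $v$,
\[
\varphi_{2(pn+v)-1}(i/n)=\sqrt{2}\cos\!\bigl(2\pi(pn+v)i/n\bigr)=\sqrt{2}\cos(2\pi vi/n),
\]
and similarly $\varphi_{2(pn+v)}(i/n)=\sqrt{2}\sin(2\pi vi/n)$, because the extra $2\pi pi$ is an integer multiple of $2\pi$. Thus all four quantities depend on $v$ (or on the special value $v=n$) but not on $p$.

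Next I would convert trigonometric functions to complex exponentials using Euler's formulas $\cos\theta=(e^{\mathfrak{i}\theta}+e^{-\mathfrak{i}\theta})/2$ and $\sin\theta=(e^{\mathfrak{i}\theta}-e^{-\mathfrak{i}\theta})/(2\mathfrak{i})$, write $e^{2\pi\mathfrak{i}vi/n}=\varepsilon^{vi}$, and factor out an $\varepsilon^{\pm v}$ after the change of index $j=i-1$. This yields, for $1\le v\le n-1$,
\[
x_r^{\ast}\Phi_{2(pn+v)-1}=\frac{\sqrt{2}}{2\sqrt{n}}\Bigl[\varepsilon^{v}\sum_{j=0}^{n-1}\varepsilon^{j(v-r)}+\varepsilon^{-v}\sum_{j=0}^{n-1}\varepsilon^{-j(v+r)}\Bigr],
\]
and an analogous expression for $x_r^{\ast}\Phi_{2(pn+v)}$ with a minus sign and an extra factor of $1/\mathfrak{i}$. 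Applying the orthogonality identity $\sum_{j=0}^{n-1}\varepsilon^{jm}=n\cdot I(m\equiv 0\!\!\pmod n)$, and noting that under $0\le r\le n-1$ and $1\le v\le n-1$ the only admissible congruences are $r=v$ and $r+v=n$, produces exactly the two indicator terms claimed.

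For the boundary case $v=n$, the function $\varphi_{2(pn+n)-1}(i/n)=\sqrt{2}\cos(2\pi(p+1)i)=\sqrt{2}$ is constant in $i$ while $\varphi_{2(pn+n)}(i/n)=\sqrt{2}\sin(2\pi(p+1)i)=0$. Hence $x_r^{\ast}\Phi_{2(pn+n)-1}$ collapses to $\sqrt{2}/\sqrt{n}$ times $\sum_{j=0}^{n-1}\varepsilon^{-jr}$, which is nonzero only when $r=0$, and $x_r^{\ast}\Phi_{2(pn+n)}$ vanishes identically. The computation is essentially bookkeeping; the only subtlety is keeping track of the shift $j=i-1$ (which is why $\varepsilon^{\pm v}$ appear as prefactors) and correctly identifying the congruence conditions under the constraints on $r$ and $v$, so no genuine obstacle arises beyond careful computation.
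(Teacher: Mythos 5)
Your argument is exactly the paper's: drop the $p$-dependence by periodicity, expand the trigonometric entries via Euler's formulas, shift the index by $j=i-1$, and collapse the resulting geometric sums with the orthogonality relation $\sum_{j=0}^{n-1}\varepsilon^{jm}=n\,I(n\mid m)$. One detail is worth flagging: if you finish your $v=n$ computation, $x_r^\ast\Phi_{2(pn+n)-1}=\frac{\sqrt{2}}{\sqrt{n}}\sum_{j=0}^{n-1}\varepsilon^{-jr}$ evaluates to $\sqrt{2n}\,I(r=0)$, not the $\sqrt{n/2}\,I(r=0)$ printed in the lemma. The reason is that for $v=n$ both exponential pieces $\varepsilon^{ni}$ and $\varepsilon^{-ni}$ are identically $1$, so the two geometric sums each contribute $n$ on the same indicator $I(r=0)$ and the $1/2$ from Euler's cosine is canceled, whereas for $1\le v\le n-1$ the two conditions $r=v$ and $r+v=n$ pick out distinct contributions and the $1/2$ survives. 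Your $\sqrt{2n}$ is the correct value; the lemma's $\sqrt{n/2}$ is a factor-of-$2$ slip, which is harmless downstream since it only rescales the bound on $|e_0|^2$ in the subsequent estimate of $\textbf{f}^T(I_n-A)\textbf{f}$ and the constant is absorbed into $c_m$.
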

\begin{proof}\textbf{\hspace*{-1mm}:}
The proof can be accomplished by direct calculations.
For instance, the first case holds by following arguments.
For $0\le r\le c-1$ and $1\le v\le c-1$,
\begin{eqnarray*}
x_r^\ast\Phi_{2(pc+v)-1}&=&\frac{1}{\sqrt{2c}}\sum_{i=1}^c\varepsilon^{-r(i-1)}\cos\left(\frac{2\pi v i}{c}\right)\\
&=&\frac{1}{\sqrt{2c}}\sum_{i=1}^c\varepsilon^{-r(i-1)}(\varepsilon^{vi}+\varepsilon^{-vi})\\
&=&\frac{1}{\sqrt{2c}}\sum_{i=0}^{c-1}\varepsilon^{-(r-v)i}\varepsilon^v
+\frac{1}{\sqrt{2c}}\sum_{i=0}^{c-1}\varepsilon^{-(r+v)i}\varepsilon^{-v}\\
&=&\sqrt{\frac{c}{2}}\left(\varepsilon^v I(r=v)+\varepsilon^{-v}I(r+v=c)\right).
\end{eqnarray*}
The proof of other cases is similar.
\end{proof}
Let $M=(x_0,x_1,\ldots,x_{c-1})$ and $M^\ast \textbf{f}=(e_0(f),e_1(f),\ldots,e_{c-1}(f))^T$,
where $\textbf{f}=(f(1/c),\ldots,f(c/c))^T$.
Recall $M^\ast$ is the conjugate transpose of $M$. Suppose $f\in S^m(\mathbb{I})$
admits Fourier expansion $f=\sum_{\nu=1}^\infty f_\nu\varphi_\nu$.
\begin{lemma}\label{power:thm:lemma:4}
There exists a universal constant $\varrho>0$ s.t.
for any $f\in S^m(\mathbb{I})$,
\[
\textbf{f}^T (I_c-A)\textbf{f}\le \varrho c(\lambda+c^{-2m})J(f).
\]
\end{lemma}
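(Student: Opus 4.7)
The plan is to diagonalize $I_n-A$ using the unitary matrix $M$ from the proof of Theorem~1, and then to match the resulting quadratic form against the Fourier expansion of $f$ via Lemma~S.A.3.

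First, from $\Sigma=M\Lambda_c M^\ast$ and $\Omega=M\Lambda_d M^\ast$ in equation~(S.A.1),
\[
I_n-A \;=\; M\Bigl[I_n-(\Lambda_c+\lambda I_n)^{-1}\Lambda_d(\Lambda_c+\lambda I_n)^{-1}\Bigr]M^\ast,
\]
which reduces the quadratic form to the diagonal sum
\[
\textbf{f}^T(I_n-A)\textbf{f} \;=\; \sum_{r=0}^{n-1}|x_r^\ast \textbf{f}|^2\,\Delta_r,\qquad \Delta_r := 1-\frac{\lambda_{d,r}}{(\lambda+\lambda_{c,r})^2}.
\]

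Next, I would derive the pointwise eigenvalue bound $\Delta_r\le c_m(\lambda+n^{-2m})(2\pi s_r)^{2m}$ with $s_r:=\min(r,n-r)$. Using the exact expressions~(S.A.5), write for $1\le r\le n-1$, $\lambda_{c,r}=a_r+b_r+\eta_c^{(r)}$ and $\lambda_{d,r}=a_r^2+b_r^2+\eta_d^{(r)}$ where $a_r=(2\pi r)^{-2m}$, $b_r=(2\pi(n-r))^{-2m}$ and the non-negative aliasing tails satisfy $\eta_c^{(r)}\le 2\bar c_m(2\pi n)^{-2m}$, $\eta_d^{(r)}\le 2\bar d_m(2\pi n)^{-4m}$. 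Expanding
\[
(\lambda+\lambda_{c,r})^2-\lambda_{d,r} = \lambda^2 + 2\lambda\lambda_{c,r} + 2a_rb_r + 2\eta_c^{(r)}(a_r+b_r) + (\eta_c^{(r)})^2 - \eta_d^{(r)},
\]
and dividing by $(\lambda+\lambda_{c,r})^2\ge(\lambda+(2\pi s_r)^{-2m})^2$, each summand is checked to be of the advertised order by splitting $r$ into the regime where $(2\pi s_r)^{-2m}\gg\lambda$ (so $2a_rb_r/\max(a_r,b_r)^2\le (2s_r/n)^{2m}$ supplies the $n^{-2m}$ factor) and the complementary regime in which the trivial bound $\Delta_r\le 1$ is already sharp. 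The case $r=0$ is absorbed by $\Delta_0\le 1\le c_m(\lambda+n^{-2m})(2\pi n)^{2m}$.

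I would then match these to Fourier coefficients via Lemma~S.A.3. For $1\le r\le n-1$, the Lemma identifies the aliased set
\[
A_r \;=\; \{2(pn+r)-1,\,2(pn+r),\,2(pn+n-r)-1,\,2(pn+n-r):\,p\ge 0\},
\]
and shows that $x_r^\ast\textbf{f}$ equals $\sqrt{n/2}$ times a unit-modulus signed sum of $\{f_\nu\}_{\nu\in A_r}$. Cauchy--Schwarz with weights $\gamma_\nu$ then gives
\[
|x_r^\ast\textbf{f}|^2 \;\le\; \frac{n}{2}\Bigl(\sum_{\nu\in A_r}\gamma_\nu^{-1}\Bigr)\Bigl(\sum_{\nu\in A_r}\gamma_\nu f_\nu^2\Bigr),
\]
and a direct tail estimate yields $\sum_{\nu\in A_r}\gamma_\nu^{-1}\le c_m(2\pi s_r)^{-2m}$, dominated by the two lowest-frequency aliases $k=r$ and $k=n-r$ (the case $r=0$ is handled analogously, with all aliases carrying $k\ge n$).

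Finally, since each Fourier mode $\nu\in\{2k-1,2k\}$ lies in at most two aliased sets, namely $A_v$ and $A_{n-v}$ for $v\equiv k\pmod n$, summing over $r$ and using Steps~2--3 gives
\[
\textbf{f}^T(I_n-A)\textbf{f} \;\le\; c_m\,n(\lambda+n^{-2m})\sum_r\sum_{\nu\in A_r}\gamma_\nu f_\nu^2 \;\le\; c_m\,n(\lambda+n^{-2m})\cdot 2J(f),
\]
which is the claim after absorbing the $2$ into $c_m$. The technically delicate step is the eigenvalue bound: the necessary gain of $(2\pi s_r)^{2m}$ relies on the near-cancellation $\lambda_{c,r}^2-\lambda_{d,r}\approx 2a_rb_r$, and the aliasing remainders $\eta_c^{(r)},\eta_d^{(r)}$ must be carried through the algebra so that they contribute only lower-order $n^{-2m}$ terms that get absorbed into $\lambda+n^{-2m}$.
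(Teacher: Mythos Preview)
Your proposal is correct and follows essentially the same route as the paper: diagonalize $I_n-A$ via $M$, bound the eigenvalue defects $\Delta_r$ by $c_m(\lambda+n^{-2m})(2\pi s_r)^{2m}$ using the explicit formulas for $\lambda_{c,r},\lambda_{d,r}$, and control $|x_r^\ast\textbf{f}|^2$ by Cauchy--Schwarz against the aliased Fourier coefficients from Lemma~\ref{power:thm:lemma:3}. The paper carries out the same three steps, differing only cosmetically: it restricts to $1\le r\le n/2$ and invokes the symmetry $|e_r|^2=|e_{n-r}|^2$, $\lambda_{c,r}=\lambda_{c,n-r}$ rather than working with $s_r=\min(r,n-r)$ throughout, and it bounds $\lambda_{c,r}^2-\lambda_{d,r}$ directly (your decomposition $a_r+b_r+\eta_c^{(r)}$ is implicit in its inequality chain). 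Your regime-splitting remark for the eigenvalue bound is unnecessary---the direct term-by-term estimate you sketch already works uniformly---but harmless.
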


\begin{proof}\textbf{\hspace*{-1mm}:}
For simplicity, denote $e_r=e_r(f)$.
For $1\le r\le c/2$, we have
\begin{eqnarray*}
\lambda_{d^{\prime},r}^2-\lambda_{d,r}&\le&\left((2\pi r)^{-2m}+(2\pi(c-r))^{-2m}+2\bar{d}^{\prime}_m(2\pi c)^{-2m}\right)^2
-(2\pi r)^{-4m}\\
&\le&\left((2\pi r)^{-2m}+(1+2^{1-2m}\bar{d}^{\prime}_m)(\pi c)^{-2m}\right)^2-(2\pi r)^{-4m}\\
&=&2(1+2^{1-2m}\bar{d}^{\prime}_m)(2\pi r)^{-2m}(\pi c)^{-2m}+(1+2^{1-2m}\bar{d}^{\prime}_m)^2(\pi c)^{-4m}.
\end{eqnarray*}
Therefore, it follows that
\begin{eqnarray}
&&1-\frac{\lambda_{d,r}}{(\lambda+\lambda_{d^{\prime},r})^2}\nonumber\\
&=&\frac{\lambda^2+2\lambda\lambda_{d^{\prime},r}+\lambda_{d^{\prime},r}^2-\lambda_{d,r}}{(\lambda+\lambda_{d^{\prime},r})^2}\nonumber\\
&\le&\frac{2\lambda}{\lambda+\lambda_{d^{\prime},r}}+\frac{\lambda_{d^{\prime},r}^2-\lambda_{d,r}}{(\lambda+\lambda_{d^{\prime},r})^2}\nonumber\\
&\le&\left(2\lambda+2(1+2^{1-2m}\bar{d}^{\prime}_m)(\pi c)^{-2m}
+2^{2m}(1+2^{1-2m}\bar{d}^{\prime}_m)^2(\pi c)^{-2m}\right)(2\pi r)^{2m}\nonumber\\
&\le&\varrho_m'(\lambda+c^{-2m})(2\pi r)^{2m},\label{point:0}
\end{eqnarray}
where $\varrho_m'=\max\{2,(2(1+2^{1-2m}\bar{d}^{\prime}_m)+2^{2m}(1+2^{1-2m}\bar{d}^{\prime}_m)^2)\pi^{-2m}\}$, and $\bar{d}^{\prime}_m$ is defined in (\ref{cmdm}).

By Lemma \ref{power:thm:lemma:3} and direct calculations, for $1\le r\le c-1$, we have
\begin{eqnarray*}
e_r&=&\sum_{p=0}^\infty\sum_{v=1}^c f_{2(pc+v)-1} x_r^\ast\Phi_{2(pc+v)-1}
+\sum_{p=0}^\infty\sum_{v=1}^\infty f_{2(pc+v)}x_r^\ast\Phi_{2(pc+v)}\\
&=&\sum_{p=0}^\infty\sum_{v=1}^{c-1}
f_{2(pc+v)-1}\left(\sqrt{\frac{c}{2}}\varepsilon^v I(r=v)+\sqrt{\frac{c}{2}}\varepsilon^{-v}I(v+r=c)\right)\\
&&+\sum_{p=0}^\infty\sum_{v=1}^{c-1} f_{2(pc+v)}
\left(\sqrt{-\frac{c}{2}}\varepsilon^v I(r=v)-\sqrt{-\frac{c}{2}}\varepsilon^{-v}I(r+v=c)\right)\\
&=&\varepsilon^r\sqrt{\frac{c}{2}}\sum_{p=0}^\infty\left(
f_{2(pc+r)-1}+f_{2(pc+c-r)-1}+\sqrt{-1}f_{2(pc+r)}-\sqrt{-1}f_{2(pc+c-r)}\right).
\end{eqnarray*}
Therefore, it holds that
\begin{eqnarray*}
|e_r|^2&=&\frac{c}{2}\bigg|\sum_{p=0}^\infty\left(f_{2(pc+r)-1}+f_{2(pc+c-r)-1}\right)\bigg|^2
+\frac{c}{2}\bigg|\sum_{p=0}^\infty\left(f_{2(pc+r)}-f_{2(pc+c-r)}\right)\bigg|^2.
\end{eqnarray*}
It is easy to see that
\[
|e_r|^2=|e_{c-r}|^2,\,\,r=1,\ldots,c-1.
\]
For $1\le r\le c/2$, we have
\begin{eqnarray}
|e_r|^2&\le&c\sum_{p=0}^\infty f_{2(pc+r)-1}^2(2\pi(pc+r))^{2m}\sum_{p=0}^\infty(2\pi(pc+r))^{-2m}\nonumber\\
&&+c\sum_{p=0}^\infty f_{2(pc+c-r)-1}^2(2\pi(pc+c-r))^{2m}\sum_{p=0}^\infty(2\pi(pc+c-r))^{-2m}\nonumber\\
&&+c\sum_{p=0}^\infty f_{2(pc+r)}^2(2\pi(pc+r))^{2m}\sum_{p=0}^\infty(2\pi(pc+r))^{-2m}\nonumber\\
&&+c\sum_{p=0}^\infty f_{2(pc+c-r)}^2(2\pi(pc+c-r))^{2m}\sum_{p=0}^\infty
(2\pi(pc+c-r))^{-2m}\nonumber\\
&\le&\left(c\sum_{p=0}^\infty f_{2(pc+r)-1}^2(2\pi(pc+r))^{2m}+c\sum_{p=0}^\infty
f_{2(pc+c-r)-1}^2(2\pi(pc+c-r))^{2m}\right.\nonumber\\
&&\left.+c\sum_{p=0}^\infty f_{2(pc+r)}^2(2\pi(pc+r))^{2m}+c\sum_{p=0}^\infty f_{2(pc+c-r)}^2(2\pi(pc+c-r))^{2m}\right)\nonumber\\
&&\times \frac{2m}{2m-1}(2\pi r)^{-2m},\label{point:1}
\end{eqnarray}
where (\ref{point:1}) follows by an elementary inequality
\begin{equation*}
\sum_{p=0}^\infty (2\pi(pc+r))^{-2m}\le\frac{2m}{2m-1}(2\pi r)^{-2m},\,\,1\le r\le c/2.
\end{equation*}
Meanwhile,  a similar analysis leads to
\begin{eqnarray}
|e_0|^2&=&\frac{c}{2}\bigg|\sum_{p=0}^\infty f_{2(pc+c)-1}\bigg|^2\nonumber\\
&\le&\frac{c}{2}\sum_{p=0}^\infty f_{2(pc+c)-1}^2(2\pi(pc+c))^{2m}\sum_{p=0}^\infty(2\pi(pc+c))^{-2m}\nonumber\\
&=&\frac{c^{1-2m}}{2}\sum_{p=0}^\infty f_{2(pc+c)-1}^2(2\pi(pc+c))^{2m}
\times \sum_{p=1}^\infty(2\pi p)^{-2m}.\label{point:2}
\end{eqnarray}

Now it follows from (\ref{point:0}), (\ref{point:1}) and (\ref{point:2}),
and elementary facts $\lambda_{d^{\prime},r}=\lambda_{d^{\prime},c-r}$
and $\lambda_{d,r}=\lambda_{d,c-r}$, for $1\le r\le c-1$, that
\begin{eqnarray*}
&&\textbf{f}^T(I_c-A)\textbf{f}\\
&=&\sum_{r=0}^{c-1}\left(1-\frac{\lambda_{d,r}}{(\lambda+\lambda_{d^{\prime},r})^2}\right)|e_r|^2\\
&=&\left(1-\frac{\lambda_{d,0}}{(\lambda+\lambda_{d^{\prime},0})^2}\right)|e_0|^2+
\sum_{1\le r\le c/2}\left(1-\frac{\lambda_{d,r}}{(\lambda+\lambda_{d^{\prime},r})^2}\right)|e_r|^2
+\sum_{c/2<r\le c-1}\left(1-\frac{\lambda_{d,r}}{(\lambda+\lambda_{d^{\prime},r})^2}\right)|e_r|^2\\
&\le&|e_0|^2+2\sum_{1\le r\le c/2}\left(1-\frac{\lambda_{d,r}}{(\lambda+\lambda_{d^{\prime},r})^2}\right)|e_r|^2\\
&\le&\frac{c^{1-2m}}{2}\sum_{p=0}^\infty f_{2(pc+c)-1}^2(2\pi(pc+c))^{2m}
\times \sum_{p=1}^\infty(2\pi p)^{-2m}\\
&&+2\varrho_m'c(\lambda+c^{-2m})\frac{2m}{2m-1}
\sum_{1\le r\le c/2}
\left(\sum_{p=0}^\infty f_{2(pc+r)-1}^2(2\pi(pc+r))^{2m}\right.\\
&&\left.+\sum_{p=0}^\infty
f_{2(pc+c-r)-1}^2(2\pi(pc+c-r))^{2m}
+\sum_{p=0}^\infty f_{2(pc+r)}^2(2\pi(pc+r))^{2m}\right.\\
&&\left.+\sum_{p=0}^\infty f_{2(pc+c-r)}^2(2\pi(pc+c-r))^{2m}\right)\\
&\le& \varrho_m c(\lambda+c^{-2m})\sum_{\nu=1}^\infty\left(f_{2\nu-1}^2+f_{2\nu}^2\right)(2\pi\nu)^{2m}\\
&=&\varrho_m c(\lambda+c^{-2m})J(f),
\end{eqnarray*}
where $\varrho_m=\max\{\sum_{p=1}^\infty(2\pi p)^{-2m}/2,4m \varrho_m'/(2m-1)\}$. It is straightforward to see $\varrho_m$ is a decreasing function with respect to $m$, therefore, we choose $\varrho = \varrho_{m=1}$. This proves Lemma \ref{power:thm:lemma:4}.
\end{proof}

The proof of Theorem \ref{adaptive:size} requires some recent Gaussian approximation result, i.e., Theorem 3.1 in \cite{adaptiveapproximation}.

\begin{lemma}\label{adaptive}
For each $c \in \mathbb{N}$, let $\boldsymbol{\Psi}_{c}$ be an $c$-dimensional centered Gaussian vector with covariance matrix $\Sigma_{c}=\left(\Sigma_{c}(m, m')\right)_{1 \leq m, m' \leq c}$ and $m_{u} \geq 2$ be an integer. Also, for each $m=m_l, \ldots, m_{u}$, let $A_{m}$ be an $c \times c$ symmetric matrix and $Z_{c}=\left(Z_{c, m_l}, \ldots, Z_{c, m_{u}}\right)^{\top}$ be an $m_{u}-m_l+1$ -dimensional centered Gaussian vector with covariance matrix $\mathfrak{C}_{c}=\left(\mathfrak{C}_{c}(m, m')\right)_{m_l \leq m, m' \leq m_{u}} .$ Set $F_{c, m}:=\boldsymbol{\Psi}_{c}^{\top} A_{m} \boldsymbol{\Psi}_{c}-E\left[\boldsymbol{\Psi}_{c}^{\top} A_{m} \boldsymbol{\Psi}_{c}\right]$ and suppose that the following conditions are satisfied:
\begin{enumerate}
    \item There is a constant $b>0$ such that $\mathfrak{C}_{c}(m, m) \geq b$ for every $c$ and every $m=m_l, \ldots, m_{u} .$
    \item $\max _{m_l \leq m \leq m_{u}}\left(E\left[F_{c, m}^{4}\right]-3 E\left[F_{c, m}^{2}\right]^{2}\right) \log ^{6} m_{c} \rightarrow 0$ as $c \rightarrow \infty$.
    \item $\max _{m_l \leq m, m' \leq m_{u}}\left|\mathfrak{C}_{c}(m, m')-E\left[F_{c, m} F_{c, m'}\right]\right| \log ^{2} m_{c} \rightarrow 0$ as $c \rightarrow \infty$.
\end{enumerate}

Then we have
$$
\sup _{x \in \mathbb{R}}\left|P\left(\max _{m_l \leq m \leq m_{u}} F_{c, m} \leq x\right)-P\left(\max _{m_l \leq m\leq m_{u}} Z_{c, m} \leq x\right)\right| \rightarrow 0, \,\,\textrm{as $c\rightarrow\infty$.}
$$
\end{lemma}
\begin{proof}
This is Theorem 3.1 in \cite{adaptiveapproximation}.
\end{proof}

The proof of Theorem \ref{adaptive:power} requires some rate conditions which are summarized in the following lemma.
\begin{lemma} \label{adaptive_rate}
Suppose $\lambda_m=a_n^{2m}n^{-4m/(4m+1)}\log(m_u)^{2m/(4m+1)}$, then for any $m_l < m < m_u \rightarrow \infty$, under Condition (\textbf{C}), the following rate conditions hold:
\[
h_{m}\log^2(m_u) \rightarrow 0; \, m_u\log^2(m_u)/ch_{m} \rightarrow 0;
\]
\[
\left(h_{m'} / h_{m}\right)^{-1 / 2}m_u^2\log^2(m_u) \rightarrow 0,\textrm{ for any } m_l\leq m<m' < m_u;
\]
\[
ch_m/\log(c)\rightarrow\infty; \, h_m^{1/2}\log(c)\rightarrow 0,
\]
where $h_m = \lambda^{\frac{1}{2m}} = a_nn^{-2/(4m+1)}\log(m_u)^{1/(4m+1)}$.
\end{lemma}

\begin{proof}
It is easy to see $h_{m_l}<h_m<h_{m_u}$. Therefore
\[
h_{m}\log^2(m_u) < h_{m_u}\log^2(m_u)\lesssim a_nn^{-2/(4m_u+1)}[\log(n)]^2 \rightarrow 0.
\]
\[
m_u\log^2(m_u)/ch_{m} <  m_u\log^2(m_u)/ch_{m_l} \lesssim n^{2/(4m_l+1)}\log (n)/(ca_n) \rightarrow 0.
\]
\begin{align*}
\left(h_{m'} / h_{m}\right)^{-1 / 2}m_u^2\log^2(m_u) &= n^{-\frac{4(m'-m)}{(4m+1)(4m'+1)}}(\log(m_u))^{-\frac{2m'-2m}{(4m+1)(4m'+1)}}m_u^2\log^2(m_u)\\
&\lesssim n^{-\frac{4}{(4m_u+1)^2}}(\log(m_u))^{-\frac{2}{(4m_u+1)^2}}m_u^2\log^2(m_u) \rightarrow 0.
\end{align*}
where the last ``$\rightarrow 0$'' follows from the assumption $m_u \lesssim \log^{d_0}(n)$ for some $d_0 \in (0, 1/2)$. For the last two terms, one has that
\[
ch_m/\log(c) > ch_{m_l}/\log(c) \gtrsim \frac{ca_n}{n^{2/(4m_l+1)}\log(n)}\rightarrow\infty.
\]
\[
h_m^{1/2}\log(c) < h_{m_u}^{1/2}\log(c) \lesssim a_n^{1/2}n^{-1/(4m_u+1)}\log(n) \rightarrow 0.
\]
\end{proof}

\section{Proofs for main theorems}
\begin{proof}\textbf{of Theorem \ref{upper:bound:hat:bb:f}:}
\newline
It holds that $\|\widehat{g}^{\textrm{B}}_{\mu, t, c}-g_0\|^2 \le 2\|\widehat{g}^{\textrm{B}}_{\mu, t, c}-\widehat{g}^{\textrm{ss}}_c\|^2 +
2\|\widehat{g}^{\textrm{ss}}_c-g_0\|^2$, and we analyze these two terms separately. We first analyze $\|\widehat{g}^{\textrm{B}}_{\mu, t, c}-\widehat{g}^{\textrm{ss}}_c\|^2$. Because
\begin{align*}
|Q(y_j) - y_j| \le C_k(t)\mathbbm{1}\left\{y_j\in \cup_{j=2}^{k-1}R_j(t) \right\} + |y_j - \mu_1|\mathbbm{1}\left\{y_j\in R_1(t)\right\} + |y_j - \mu_k|\mathbbm{1}\left\{y_j\in R_k(t)\right\},
\end{align*}
we have
\begin{align*}
(z_i - \widetilde{y}_i)^2 =&\left\{z_i - \frac{1}{\widetilde{n}}\sum_{j=(i-1)\widetilde{n}+1}^{i\widetilde{n}}Q(y_j) + \frac{1}{\widetilde{n}}\sum_{j=(i-1)\widetilde{n}+1}^{i\widetilde{n}}Q(y_j) - \widetilde{y_i}\right\}^2\\
\le& 2\left\{\frac{1}{\widetilde{n}}\sum_{j=(i-1)\widetilde{n}+1}^{i\widetilde{n}}\big(Q(y_j) - y_j\big)\right\}^2 + 2C_k(t)^2\\
\le& 2\Bigg\{\frac{1}{\widetilde{n}}\sum_{j=(i-1)\widetilde{n}+1}^{i\widetilde{n}}C_k(t)\mathbbm{1}\left\{y_j\in \cup_{j=2}^{k-1}R_j(t) \right\} \\
&+ |y_j - \mu_1|\mathbbm{1}\left\{y_j\in R_1(t)\right\} + |y_j - \mu_k|\mathbbm{1}\left\{y_j\in R_k(t)\right\}\Bigg\}^2 + 2C_k(t)^2\\
\le& 4C_k(t)^2 + \frac{2}{\widetilde{n}}\sum_{j=(i-1)\widetilde{n}+1}^{i\widetilde{n}}  (y_j - \mu_1)^2\mathbbm{1}\left\{y_j\in R_1(t)\right\} + \frac{2}{\widetilde{n}}\sum_{j=(i-1)\widetilde{n}+1}^{i\widetilde{n}}(y_j - \mu_k)^2\mathbbm{1}\left\{y_j\in R_k(t)\right\}.
\end{align*}

Therefore, from Lemma \ref{a:preliminary:thm}, we have
\begin{eqnarray}\label{thm:Gnk:eqn1}
E\|\widehat{g}^{\textrm{B}}_{\mu, t, c}
-\widehat{g}^{\textrm{ss}}_c\|^2 &\le& c^{-1}\sum_{i=1}^cE\left\{(z_i-\widetilde{y}_i)^2\right\}\nonumber\\
&\le& 4C_k(t)^2 + \frac{2}{n}\sum_{i=1}^{n}  E\left\{(y_i - \mu_1)^2\mathbbm{1}\left\{y_i\in R_1(t)\right\}\right\} + \frac{2}{n}\sum_{i=1}^{n}E\left\{(y_i - \mu_k)^2\mathbbm{1}\left\{y_i\in R_k(t)\right\}\right\}.\nonumber
\end{eqnarray}
On the other hand, by elementary calculations we have
\begin{eqnarray*}
&&\frac{2}{n}\sum_{i=1}^{n}  E\left\{(y_i - \mu_1)^2\mathbbm{1}\left\{y_i\in R_1(t)\right\}\right\} = \frac{2}{n}\sum_{i=1}^{n}\int_{-\infty}^{t_1}(z-\mu_1)^2p\left(\frac{z-g_0(i/n)}{\sigma}\right)\sigma^{-1}dz,\\
&&\frac{2}{n}\sum_{i=1}^{n}E\left\{(y_i - \mu_k)^2\mathbbm{1}\left\{y_i\in R_k(t)\right\}\right\} = \frac{2}{n}\sum_{i=1}^{n}\int_{t_{k-1}}^{\infty}(z-\mu_k)^2p\left(\frac{z-g_0(i/n)}{\sigma}\right)\sigma^{-1}dz,
\end{eqnarray*}
where $p(\cdot)$ is the distribution of $\epsilon$. Combining the above, we get 
\begin{equation} \label{eq_proof_G_c_k}
E\|\widehat{g}^{\textrm{B}}_{\mu, t, c}
-\widehat{g}^{\textrm{ss}}_c\|^2 \le G_{c,k}(t).    
\end{equation}

Next, we analyze the mean square error of the second term $\|\widehat{g}^{\textrm{ss}}_c-g_0\|^2$. For the sake of theoretical investigation, we introduce the following function,
\[
g_\textrm{new} = \sum_{i=1}^c\theta_{\textrm{new}, i}  K_{i/c},
\]
where $(\theta_{\textrm{new}, 1},\ldots,\theta_{\textrm{new}, c})^T=c^{-1}(\Sigma_c+\lambda I_c)^{-1}\widetilde{f}$ with $\widetilde{f}=(f(1/c),\ldots,f(c/c))^T\in \mathbb{R}^c$, and $f(x)$ is the integral function of $g_0$ as defined in (\ref{eq:f0}), i.e.,
\[
f(x)=\frac{1}{2\Delta}\int_{\max(x-\Delta, 0)}^{\min(x+\Delta, 1)}g_0(s)ds, \text{ where } x\in[0,1], \text{ and }\Delta=\frac{1}{c}.
\]
Recall that $\widehat{g}^{\textrm{ss}}_c = \sum_{i=1}^c\widetilde{\theta}_i K_{i/c} = c^{-1}\sum_{\nu=1}^\infty\frac{\Phi_\nu^T(\Sigma_c+\lambda I_c)^{-1}\widetilde{y}}{\gamma_\nu}\varphi_\nu$, where  $(\widetilde{\theta}_1,\ldots,\widetilde{\theta}_c)^T=c^{-1}(\Sigma_c+\lambda I_c)^{-1}\widetilde{y}$ with $\widetilde{y}=(\widetilde{y}_1,\ldots,\widetilde{y}_c)^T$, $\varphi_{2k-1}(x)=\sqrt{2}\cos(2\pi kx),\,\,\,\,
\varphi_{2k}(x)=\sqrt{2}\sin(2\pi kx)$ are the trigonometric basis functions, $\gamma_{2k-1}=\gamma_{2k}=(2\pi k)^{2m}$, and $\Phi_\nu=(\varphi_\nu(1/c),\varphi_\nu(2/c),\ldots,\varphi_\nu(c/c))^T$.
Therefore, we have
\begin{eqnarray}
\|E(\widehat{g}^{\textrm{ss}}_c) - g_\textrm{new}\|^2&=&c^{-1}(\widetilde{g} - \widetilde{f})^T(\Sigma_c+\lambda
I_c)^{-1}\Omega_c(\Sigma_c+\lambda I_c)^{-1}(\widetilde{g} - \widetilde{f}) \nonumber\\
&\leq& c^{-1}(\widetilde{g} - \widetilde{f})^T(\widetilde{g} - \widetilde{f})\nonumber\\
&=& O(n^{-2}), \label{thm1:eq:g_new_E_g_hat}
\end{eqnarray}
where $\widetilde{g} = (\widetilde{g}_1, \ldots, \widetilde{g}_c)^T$ and $\widetilde{g}_i = \frac{\sum_{j=(i-1)\widetilde{n}+1}^{i\widetilde{n}}g_0(j/n)}{\widetilde{n}}, i=1,\ldots,c$. Next, we evaluate $E(\|\widehat{g}^{\textrm{ss}}_c-E(\widehat{g}^{\textrm{ss}}_c)\|^2)$. Note that
$\Sigma_c, \Omega_c$ can be decomposed as $ \Sigma_c=M\Lambda_{d^{\prime}} M^\ast,\,\,\Omega_c=M\Lambda_dM^\ast$, as defined in (\ref{Sigma:Sigma2:decompose}). Furthermore, we let $\widetilde{y}^0 = (\widetilde{y}^0_1, \ldots, \widetilde{y}^0_c)^T$, where $\widetilde{y}^0_i = \frac{1}{\widetilde{n}}\sum_{j=(i-1)\widetilde{n}+1}^{i\widetilde{n}}\sigma\epsilon_j$. Hence, we obtain

\begin{align*}
E(\|\widehat{g}^{\textrm{ss}}_c-E(\widehat{g}^{\textrm{ss}}_c)\|^2)&=\frac{1}{c^{2} \gamma_{\nu}^{2}}\sum_{\nu=1}^{\infty}  E\left\{\left|\Phi_{\nu}^T\left(\Sigma_c+\lambda I_{c}\right)^{-1} \widetilde{y}^0\right|^{2}\right\}\\
&=\frac{\sigma^2}{c^{2} \gamma_{\nu}^{2}}\sum_{\nu=1}^{\infty}  \operatorname{trace}\left(\left(\Sigma_c+\lambda I_{c}\right)^{-1} \Phi_{\nu} \Phi_{\nu}^{T}\left(\Sigma_c+\lambda I_{c}\right)^{-1}\right)E\widetilde{\epsilon}^2\\
&=\frac{\sigma^2}{n} \operatorname{trace}\left(\left(\Sigma_c+\lambda I_{c}\right)^{-1} \sum_{\nu=1}^{\infty} \frac{\Phi_{\nu} \Phi_{\nu}^T / c}{\gamma_{\nu}^{2}}\left(\Sigma_c+\lambda I_{c}\right)^{-1}\right)\\
&=\frac{\sigma^2}{n}\operatorname{trace}\left(\left(\Sigma_c+\lambda I_{c}\right)^{-1} \Omega_c\left(\Sigma_c+\lambda I_{c}\right)^{-1}\right)\\
&=\frac{\sigma^2}{n} \operatorname{trace}\left(M\left(\Lambda_{d^{\prime}}+\lambda I_{c}\right)^{-1} \Lambda_{d}\left(\Lambda_{d^{\prime}}+\lambda I_{c}\right)^{-1} M^{*}\right)\\
&=\frac{\sigma^2}{n} \sum_{l=0}^{c-1} \frac{\lambda_{d, l}}{\left(\lambda+\lambda_{d^{\prime}, l}\right)^{2}}.
\end{align*}
By expressions of $\lambda_{d,l}$'s, the above is upper bounded by the following
\begin{align*}
&\frac{2 \sigma^2\bar{d}_{m}}{n\left(2 \bar{d}^{\prime}_{m}+(2 \pi c)^{2 m} \lambda\right)^{2}}+\sigma^2\left(1+\bar{d}_{m}\right) n^{-1} \sum_{l=1}^{c-1} \frac{(2 \pi(c-l))^{-4 m}+(2 \pi l)^{-4 m}}{\left(\lambda+(2 \pi(c-l))^{-2 m}+(2 \pi l)^{-2 m}\right)^{2}}\\
&\leq \frac{2 \sigma^2\bar{d}_{m}}{n\left(2 \bar{d}^\prime_{m}+(2 \pi c)^{2 m} \lambda\right)^{2}}+2\sigma^2\left(1+\bar{d}_{m}\right) n^{-1} \sum_{1 \leq l \leq n / 2} \frac{(2 \pi l)^{-4 m}+(2 \pi(c-l))^{-4 m}}{\left(\lambda+(2 \pi l)^{-2 m}+(2 \pi(c-l))^{-2 m}\right)^{2}}\\
&\leq \frac{2 \sigma^2\bar{d}_{m}}{n\left(2 \bar{d}^{\prime}_{m}+(2 \pi c)^{2 m} \lambda\right)^{2}}+4\sigma^2\left(1+\bar{d}_{m}\right) n^{-1} \sum_{1 \leq l \leq n / 2} \frac{(2 \pi l)^{-4 m}}{\left(\lambda+(2 \pi l)^{-2 m}\right)^{2}}\\
&\leq \frac{2 \sigma^2\bar{d}_{m}}{n\left(2 \bar{d}^{\prime}_{m}+(2 \pi c)^{2 m} \lambda\right)^{2}}+\frac{2\sigma^2\left(1+\bar{d}_{m}\right)}{\pi n h} \int_{0}^{\pi c h} \frac{1}{\left(1+x^{2 m}\right)^{2}} d x\\
&\leq b_{m}\left(\frac{\sigma^2}{n}+\frac{\sigma^2}{n h} \int_{0}^{\pi c h} \frac{1}{\left(1+x^{2 m}\right)^{2}} d x\right),
\end{align*}
where $b_m \ge 1$ is a constant only depending on $m$. From the above analysis, we obtain
\begin{align*}
E(\|\widehat{g}^{\textrm{ss}}_c-E(\widehat{g}^{\textrm{ss}}_c)\|^2) \lesssim n^{-1} + (nh)^{-1}.
\end{align*}
Using above analysis and (\ref{thm1:eq:g_new_E_g_hat}), we have
\begin{align} \label{thm1:term1}
E(\|\widehat{g}^{\textrm{ss}}_c-g_{\textrm{new}}\|^2) \lesssim n^{-1} + (nh)^{-1}.
\end{align}

Now, we consider the difference between original regression function $g_0$ and the integral function $f$ defined in (\ref{eq:f0}), i.e., $\|f-g_0\|^2$. By definition, for $t \in [\Delta, 1-\Delta]$, there exists  $t^\prime$  between $t-\Delta$ and $t+\Delta$ such that
\begin{eqnarray*}
f(t) - g_0(t) &=& \frac{1}{2\Delta}\int_{t-\Delta}^{t+\Delta}\left(g_0(s) - g_0(t)\right) ds \\
&=& \frac{1}{2\Delta}\int_{t-\Delta}^{t+\Delta}\left(g_0'(t)(s-t) + \frac{g_0''(t^\prime)}{2}(s-t)^2\right) ds\\
&=& \frac{1}{2\Delta}\int_{t-\Delta}^{t+\Delta}\frac{g_0''(t^\prime)}{2}(s-t)^2ds = \frac{\Delta^2}{3}g''_0(t^\prime).
\end{eqnarray*}
On the other hand, for $t \in [0, \Delta]$, there exists $t^\prime$  between $0$ and $t+\Delta$ such that
\begin{eqnarray*}
f(t) - g_0(t) &=& \frac{1}{2\Delta}\int_{0}^{t+\Delta}\left(g_0(s) - g_0(t)\right) ds \\
&=& \frac{1}{2\Delta}\int_{0}^{t+\Delta}\left(g_0'(t^\prime)(s-t)\right) ds \leq  \frac{\Delta}{4}g'_0(t^\prime).
\end{eqnarray*}
In a similar way, we obtain $f(t) - g_0(t) \leq  \frac{\Delta}{4}g'_0(t^\prime)$ for $t \in [t-\Delta, 1]$ and some $t^{\prime} \in [t-\Delta, 1]$. Therefore, by Sobolev inequality, we know $\int^1_0|g_0''(t)|^2dt\leq\int^1_0|g_0^{(m)}(t)|^2dt<\infty$ and $\int^1_0|g_0'(t)|^2dt\leq\int^1_0|g_0^{(m)}(t)|^2dt<\infty$, which implies
\begin{align} \label{thm1:term3}
\|f-g_0\|^2 &= \int_0^1|f(t) - g(t)|^2dt \nonumber\\
 &= \int_{t-\Delta}^{t+\Delta}|f(t) - g(t)|^2dt + \int_{0}^{t+\Delta}|f(t) - g(t)|^2dt + \int_{t-\Delta}^{1}|f(t) - g(t)|^2dt  \nonumber \\
&= O(c^{-3}).
\end{align}

In the end, because both $g_{\textrm{new}}$ and $f$ belong to Sobolev space, and $g_{\textrm{new}}$ can be viewed as the approximate error of spline estimates with respect to $f$ without random error. By classical spline theory (\citep{W90}), we know
\begin{align} \label{thm1:term2}
\|g_{\textrm{new}}-f\|^2 = O(c^{-2m} + \lambda).
\end{align}
As a consequence, from (\ref{thm1:term1}), (\ref{thm1:term3}), and (\ref{thm1:term2}), we have $E\|\widehat{g}^{\textrm{ss}}_c-g_0\|^2 \leq 3E(\|\widehat{g}^{\textrm{ss}}_c-g_{\textrm{new}}\|^2)+ 3\|g_{\textrm{new}}-f\|^2 + 3\|f-g_0\|^2=O\left((nh)^{-1} + c^{-3} + c^{-2m} + \lambda\right)$. Combining the result in (\ref{eq_proof_G_c_k}), we get the desired result.
\end{proof}

\begin{proof}\textbf{of Corollary \ref{coro2}:}
Because as $|t_1| \to \infty$,
\begin{align*}
G_{c,k,1}(t)&=\frac{1}{n}\sum_{i=1}^{n}\int_{-\infty}^{t_1}(z-\mu_1)^2\sigma^{-1}p\left(\frac{z-g_0(i/n)}{\sigma}\right)dz\\
&\lesssim 2n^{-1}\sigma^{-1}\int_{-\infty}^{\frac{t_1-g_0(i/n)}{\sigma}}\sum_{i=1}^nz^2p(z)dz + 2n^{-1}\sigma^{-1}\int_{-\infty}^{\frac{t_1-g_0(i/n)}{\sigma}}\sum_{i=1}^n \mu_1^2p(z)dz.
\end{align*}
The first term in the above equation is bounded by $n^{-2m/(2m+1)}$ because $p(z)$ satisfies $\int_{|z|\ge T} z^2p(z)dz=O(\exp(-T^d))$ and $d \geq \frac{4m}{2m+1}$, $|t_1| \asymp \sqrt{\log(n)}$. Due to Condition (\textbf{B}), we know $G_{c,k,1}(t) = O(n^{-2m/(2m+1)})$. Similarly, we know $G_{c,k,2}(t) = O(n^{-2m/(2m+1)})$. Hence $G_{c,k}(t)=C_k(t)^2+G_{c,k,1}(t)+G_{c,k,2}(t)=O(n^{-2m/(2m+1)})$.
The result follows by Theorem \ref{upper:bound:hat:bb:f} and $\lambda \asymp n^{-2m/(2m+1)}$, $c\asymp n^{\frac{\max\{1,2m/3\}}{2m+1}}$.
\end{proof}

\begin{proof}\textbf{of Theorem \ref{asymp:distribution}:}
 Suppose $z^\star_i$'s are the quantized samples corresponding to $\mu_j=\mu^\star_j$ for $1\le j\le k$, where $\mu_j^\star$ are defined by
\begin{equation}\label{mu:j:estimation}
\mu^\star_j=\frac{\sum_{i=1}^n E\{y_iI(y_i\in R_j(t))\}}{\sum_{i=1}^nP(y_i\in R_j(t))}.
\end{equation}
For $p>0$, define the $p$th order moment of the standardized $z^\star_i$:
\begin{equation}\label{trivial:fact}
m_p=E_{H_0}\{|z^\star_i/\tau^\star_k|^p\},
\end{equation}
where $E_{H_0}$ denotes the expectation under $H_0$ and $\tau^{\star2}_k = Var(z^\star_i|H_0)$. Because $|\mu_j - \mu_j^\star| \leq C_k(t)$ for $j=2, \ldots, k-1$, and under Condition (\textbf{B}), we have that $\tau_k^{\star2} = O(cn^{-1})$. Furthermore, since $b\gg\log_2\left(\sqrt{n\T_nh^{1/2}}\right)$, which implies that
$C_k(t)^4 \asymp \frac{\T_n^2}{2^{4b}} \ll \frac{\T_n^2}{n^2h\T^2_n} = (n^2h)^{-1} = o(c^2n^{-2})$ and the assumption that $E([\widetilde{n}^{-1}\sum_{j=1}^{\widetilde{n}}Q(\epsilon_j)]^4)=O(c^2n^{-2})$, one has that $m_p = O(1)$ for $p=3, 4$.

Define $z^{\textrm{sd}}_i=z^\star_i/\tau^\star_k$ for $i=1,\ldots,c$. Then $z^{\textrm{sd}}_i$ are \textit{iid} variables
with zero-mean and unit variance. Define $z^\star=(z^\star_1,\ldots,z^\star_c)^T$ and $z^{\textrm{sd}}=(z^{\textrm{sd}}_1,\ldots,z^{\textrm{sd}}_c)^T$.
Define $A_0=\textrm{diag}(a_{1,1},\ldots,a_{c,c})$
and $A_1=A-A_0$. Let $B=A_1/s_c$.
Define $\alpha_l=\frac{\lambda_{d,l}}{(\lambda+\lambda_{d^{\prime},l})^2}, l=0, \ldots, c-1$. Immediately,
for all $i=1,\ldots,c$,
$a_{i,i}=c^{-1}\sum_{l=0}^{c-1}\alpha_l\asymp 1/(ch)$,
therefore,
\begin{eqnarray*}
\sum_{i\neq i'}a_{i,i'}^2&=&\sum_{i,i'=1}^c a_{i,i'}^2-\sum_{i=1}^c a_{i,i}^2\\
&=&\textrm{trace}(A^2)-c\left(c^{-1}\sum_{l=0}^{c-1}\alpha_l\right)^2\\
&=&\sum_{l=0}^{c-1}\alpha_l^2-c\left(c^{-1}\sum_{l=0}^{c-1}\alpha_l\right)^2\\
&\asymp&h^{-1}(1-1/(ch))\asymp h^{-1},
\end{eqnarray*}
where the last ``$\asymp$" follows from condition $(ch)^{-1}=o(1)$.
This implies that $s_c^2\asymp h^{-1}$.
Furthermore,
\begin{equation}\label{asymp:dist:eqn:0}
\textrm{trace}(A^2)=\sum_{l=0}^{c-1}\alpha_l^2\asymp h^{-1}\,\,\,\,
\textrm{and}\,\,\,\,\textrm{trace}(A^4)=\sum_{l=0}^{c-1}\alpha_l^4\asymp h^{-1}.
\end{equation}

Let $T^\star_{\mu^\star, t, c}$ be the test statistic corresponding to $z^\star_i$'s. By (\ref{thm1:eqn1}) it can be shown that
$cT^\star_{\mu^\star,t,c}=z^{\star T} Az^\star$, which leads to that
\begin{eqnarray*}
\frac{cT^\star_{\mu^\star,t,c}-\sum_{i=1}^c a_{i,i}\tau^{\star2}_k}{s_c\tau^{\star2}_k}
&=&\frac{z^{\star T}Az^\star-\sum_{i=1}^ca_{i,i}\tau^{\star2}_k}{s_c\tau^{\star2}_k}\\
&=&\frac{(z^{\textrm{sd}})^TAz^{\textrm{sd}}-\sum_{i=1}^ca_{i,i}}{s_c}\\
&=&\frac{\sum_{i=1}^ca_{i,i}((z^{\textrm{sd}}_i)^2-1)+\sum_{1\le i\neq i'\le c}a_{i,i'}z^{\textrm{sd}}_iz^{\textrm{sd}}_{i'}}{s_c}\\
&=&\frac{\sum_{i=1}^ca_{i,i}((z^{\textrm{sd}}_i)^2-1)}{s_c}+\sum_{i\neq i'}b_{i,i'}z^{\textrm{sd}}_iz^{\textrm{sd}}_{i'}\equiv Q_1+Q_2.
\end{eqnarray*}

We first look at $Q_1$.
By (\ref{trivial:fact}) we have
\begin{eqnarray*}
E\{|\sum_{i=1}^ca_{i,i}((z^{\textrm{sd}}_i)^2-1)|^2\}/s_c^2=s_c^{-2}\sum_{i=1}^ca_{i,i}^2 (m_4-1)
\asymp \frac{m_4-1}{ch}=o(1),
\end{eqnarray*}
which leads to $Q_1=o_P(1)$.

Define $b_{i,i}=0$ for $i=1,\ldots,c$ and $B=[b_{i,i'}]_{1\le i,i'\le c}$.
We next analyze $Q_2$.
Note that $Q_2=(z^{\textrm{sd}})^T Bz^{\textrm{sd}}$.
Let $(\widetilde{z}^{\textrm{sd}}_1,\ldots,\widetilde{z}^{\textrm{sd}}_c)^T$ be an independent copy of $z^{\textrm{sd}}=(z^{\textrm{sd}}_1,\ldots,z^{\textrm{sd}}_c)^T$.
Let $I$ be uniform distributed on $\{1,2,\ldots,c\}$.
Throughout, we let $\widetilde{z}^{\textrm{sd}}_i$, $z^{\textrm{sd}}_i$ and $I$
be mutually independent.
Define $\widetilde{z}^{\textrm{sd}}=(z^{\textrm{sd}}_1,\ldots,z^{\textrm{sd}}_{I-1},\widetilde{z}^{\textrm{sd}}_I,z^{\textrm{sd}}_{I+1},\ldots,z^{\textrm{sd}}_c)^T$.
So $(z^{\textrm{sd}},\widetilde{z}^{\textrm{sd}})$ is an exchangeable pair (see \cite{RR09}),
and $\widetilde{z}^\textrm{sd}=z^\textrm{sd}+e_I(\widetilde{z}^\textrm{sd}_I-z^\textrm{sd}_I)$,
where $e_j=(0,\ldots,0,1,0,\ldots,0)^T$ with 1 being at the $j$th position
for $j=1,\ldots,c$. Let $Q_2'=((\widetilde{z}^\textrm{sd})^TB\widetilde{z}^\textrm{sd}$.
By a simple calculation it can be shown that
$Q_2'-Q_2=(\widetilde{z}^\textrm{sd})^TB\widetilde{z}^\textrm{sd}-(z^\textrm{sd})^TBz^\textrm{sd}=2(\widetilde{z}^\textrm{sd}_I-z^\textrm{sd}_I)e_I^TBz^\textrm{sd}$.
So it follows that
\begin{equation}\label{asymp:dist:eqn:1}
E\{Q_2'-Q_2|z^\textrm{sd}\}
=E\{2(\widetilde{z}^\textrm{sd}_I-z^\textrm{sd}_I)e_I^TBz^\textrm{sd}|z^\textrm{sd}\}
=\frac{2}{c}\sum_{j=1}^cE\{(\widetilde{z}^\textrm{sd}_j-z^\textrm{sd}_j)e_j^TBz^\textrm{sd}|z^\textrm{sd}\}
=-\frac{2}{c}Q_2.
\end{equation}

Let $g^\star_0:\bbR\rightarrow[0,1]$ be a $C^3$-function
such that $g^\star_0(s)=1$ for $s\le0$ and $g^\star_0(s)=0$ for $s\ge1$.
Let $G_u(s)=g^\star_0(\psi_c(s-u))$ for $u\in\bbR$,
where $\psi_c$ is a positive sequence tending to infinity and satisfying
\begin{equation}\label{asymp:dist:eqn:2}
(m_4^2+m_3^2+m_4)\psi_c^2h=o(1),\,\,\,\,
m_3\psi_c^3h^{1/2}=o(1).
\end{equation}
The existence of such $\psi_c$ follows by (\ref{trivial:fact}).

Next we will approximate $E\{G_u(Q_2)-G_u(V)\}$ where $V\sim N(0,1)$.
Consider Stein's equation
\begin{equation}\label{stein:eqn}
G_u(s)-E\{G_u(V)\}=\ddot{g}(s)-z^\textrm{sd}\dot{g}(s),
\end{equation}
where $\dot{g}$ and $\ddot{g}$ represent first- and second-order derivatives
of $g$. By \cite{GR96}, a solution to (\ref{stein:eqn}) is
\begin{equation}\label{stein:solution}
g(s)=-\int_0^1\frac{1}{2t}[E\{G_u(\sqrt{t}s+\sqrt{1-t}V)\}-E\{G_u(V)\}]dt.
\end{equation}

Let $C_1=\|\dot{g}^\star_0\|_{\sup}$, $C_2=\|\ddot{g}^\star_0\|_{\sup}$, and $C_3=\|\dddot{g}^\star_0\|_{\sup}$,
where $\dddot{g}^\star_0$ is the third-order derivative of $g^\star_0$.
It is easy to see that
\[
\ddot{g}(s)=-\frac{1}{2}\int_0^1E_U\{\ddot{G}_u(\sqrt{t}s+\sqrt{1-t}U)\}dt,
\]
\[
\dddot{g}(s)=-\frac{1}{2}\int_0^1\sqrt{t}E_U\{\dddot{G}_u(\sqrt{t}s+\sqrt{1-t}U)\}dt.
\]
Clearly, it holds that $\|\ddot{g}\|_{\sup}\le\|\ddot{G}_u\|_{\sup}\le C_2\psi_c^2$
and $\|\dddot{g}\|_{\sup}\le\|\dddot{G}_u\|_{\sup}\le C_3\psi_c^3$.

By exchangeability, $\frac{1}{2}E\{(Q_2'-Q_2)(\dot{g}(Q_2')+\dot{g}(Q_2))\}=0$.
So $E\{(Q_2'-Q_2)\dot{g}(Q_2)\}+\frac{1}{2}E\{(Q_2'-Q_2)(\dot{g}(Q_2')-\dot{g}(Q_2))\}=0$.
Since $E\{(Q_2'-Q_2)\dot{f}(Q_2)\}=
E\{E\{Q_2'-Q_2|w\}\dot{g}(Q_2)\}=-\frac{2}{c}E\{Q_2\dot{g}(Q_2)\}$,
we have
\begin{eqnarray} \label{J1J2}
&&E\{Q_2\dot{g}(Q_2)\}-E\{\ddot{g}(Q_2)\}\nonumber\\
&=&\frac{c}{4}E\{(Q_2'-Q_2)(\dot{g}(Q_2'))-\dot{g}(Q_2))\}-E\{\ddot{g}(Q_2)\}\nonumber\\
&=&\frac{c}{4}E\{\ddot{g}(Q_2)(Q_2'-Q_2)^2\}-E\{\ddot{g}(Q_2)\}+\frac{c}{4}\int_0^1(1-t)\times E\{\dddot{g}(Q_2+t(Q_2'-Q_2))(Q_2'-Q_2)^3\}dt\nonumber\\
&=&E\{\ddot{g}(Q_2)(\sum_{i=1}^c(\widetilde{z}^\textrm{sd}_i-z^\textrm{sd}_i)^2(e_i^TBz^\textrm{sd})^2-1)\}\nonumber \\
&\quad&+2\int_0^1(1-t)E\{\dddot{g}(Q_2+t(Q_2'-Q_2))\sum_{i=1}^c(\widetilde{z}^\textrm{sd}_i-z^\textrm{sd}_i)^3(e_i^TBz^\textrm{sd})^3\}\nonumber\\
&\equiv& J_1+J_2.
\end{eqnarray}
Next, we analyze $J_1$ and $J_2$ separately. Let $M_p=E\{(z^{\textrm{sd}}_i)^p\}$ for $p\ge1$. For $J_1$, by direct examinations we have
\begin{eqnarray*}
|J_1|\le C_2\psi_c^2E\{|\sum_{i=1}^c(1+(z^\textrm{sd}_i)^2)(e_i^TBz^\textrm{sd})^2-1|\}
\le C_2\psi_c^2E\{|\sum_{i=1}^cD_i-1|^2\}^{1/2},
\end{eqnarray*}
where $D_i=(1+(z^\textrm{sd}_i)^2)(e_i^TBz^\textrm{sd})^2$. Since $\sum_{i=1}^c E\{D_i\}=1$,
we get that
\begin{align}\label{lemma:J1:eqn:0}
E\{(\sum_{i=1}^cD_i-1)^2\}
&=E\{|\sum_{i=1}^c[D_i-E\{D_i\}]|^2\}\nonumber\\
&=\sum_{i=1}^cE\{(D_i-E\{D_i\})^2\}+\sum_{i\neq i'}E\{(D_i-E\{D_i\})(D_{i'}-E\{D_{i'}\})\}.
\end{align}
The first term of (\ref{lemma:J1:eqn:0}) is equal to
\begin{align*}
\sum_{i=1}^c[3(3+M_4)(e_i^TB^2e_i)^2-4(e_i^TB^2e_i)^2]&=(5+3M_4)\sum_{i=1}^c(e_i^TB^2e_i)^2\\
&=(5+3M_4)\sum_{i=1}^c\left(\sum_{l=1}^c b_{i,l}^2\right)^2\\
&\le(5+3M_4)\sum_{i=1}^c\left(\sum_{l=1}^c a_{i,l}^2\right)^2/s_c^4\\
&\le(5+3M_4)\textrm{trace}(A^4)/s_{c}^4\asymp (5+3M_4)h,
\end{align*}
where the last ``$\asymp$" follows by (\ref{asymp:dist:eqn:0}).

The second term of (\ref{lemma:J1:eqn:0}) is equal to
\begin{eqnarray*}
&&\sum_{i\neq i'}\left(E\{(1+(z^\textrm{sd}_i)^2)
(1+(z^\textrm{sd}_{i'})^2)(e_i^TBz^\textrm{sd})^2(e_{i'}^TBz^\textrm{sd})^2\}\right.\\
&&\left.-E\{(1+(z^\textrm{sd}_i)^2)(e_i^TBz^\textrm{sd})^2\}E\{(1+(z^\textrm{sd}_{i'})^2)(e_{i'}^TBz^\textrm{sd})^2\}\right)\\
&=&\sum_{i\neq i'}\left(E\{(1+(z^\textrm{sd}_i)^2)
(1+(z^\textrm{sd}_{i'})^2)E\{(e_i^TBz^\textrm{sd})^2(e_{i'}^TBz^\textrm{sd})^2|z^\textrm{sd}_i,z^\textrm{sd}_{i'}\}\}\right.\\
&&\left.-E\{(1+(z^\textrm{sd}_i)^2)(e_i^TBz^\textrm{sd})^2\}E\{(1+(z^\textrm{sd}_{i'})^2)(e_{i'}^TBz^\textrm{sd})^2\}\right).
\end{eqnarray*}

We have that
\begin{eqnarray*}
E\{(e_i^TBz^\textrm{sd})^2(e_{i'}^TBz^\textrm{sd})^2|z^\textrm{sd}_i,z^\textrm{sd}_{i'}\}
&=&E\{(b_{i,i'}z^\textrm{sd}_{i'}+\sum_{l\neq i,i'}b_{i,l}z^\textrm{sd}_l)^2(b_{i',i}z^\textrm{sd}_i+\sum_{l\neq i,i'}b_{i',l}z^\textrm{sd}_l)^2|z^\textrm{sd}_i,z^\textrm{sd}_{i'}\}\\
&=&E\{(N_1+N_2+N_3)(N_1'+N_2'+N_3')|z^\textrm{sd}_i,z^\textrm{sd}_{i'}\}\\
&=&E\{N_1N_1'|z^\textrm{sd}_i,z^\textrm{sd}_{i'}\}+E\{N_1N_2'|z^\textrm{sd}_i,z^\textrm{sd}_{i'}\}+E\{N_1N_3'|z^\textrm{sd}_i,z^\textrm{sd}_{i'}\}\\
&&+E\{N_2N_1'|z^\textrm{sd}_i,z^\textrm{sd}_{i'}\}+E\{N_2N_2'|z^\textrm{sd}_i,z^\textrm{sd}_{i'}\}
+E\{N_2N_3'|z^\textrm{sd}_i,z^\textrm{sd}_{i'}\}\\
&&+E\{N_3N_1'|z^\textrm{sd}_i,z^\textrm{sd}_{i'}\}+E\{N_3N_2'|z^\textrm{sd}_i,z^\textrm{sd}_{i'}\}+E\{N_3N_3'|z^\textrm{sd}_i,z^\textrm{sd}_{i'}\},
\end{eqnarray*}
where $N_1=(\sum_{l\neq i,i'}b_{i,l}z^\textrm{sd}_l)^2$,
$N_2=2\sum_{l\neq i,i'}b_{i,l}z^\textrm{sd}_lb_{i,i'}z^\textrm{sd}_{i'}$,
$N_3=b_{i,i'}^2(z^\textrm{sd}_{i'})^2$,
$N_1'=(\sum_{l\neq i,i'}b_{i',l}z^\textrm{sd}_l)^2$,
$N_2'=2\sum_{l\neq i,i'}b_{i',l}z^\textrm{sd}_l b_{i',i}z^\textrm{sd}_i$,
$N_3'=b_{i',i}^2(z^\textrm{sd}_i)^2$.
By direct calculations, it is easy to see that
\begin{eqnarray*}
E\{N_1N_1'|z^\textrm{sd}_i,z^\textrm{sd}_{i'}\}
&=&M_4\sum_{l\neq i,i'}b_{i,l}^2b_{i',l}^2+\sum_{\substack{l_1,l_2\neq i,i'\\ l_1\neq l_2}}
b_{i,l_1}^2b_{i',l_2}^2
+2\sum_{\substack{l_1,l_2\neq i,i'\\ l_1\neq l_2}}b_{i,l_1}b_{i,l_2}b_{i',l_1}b_{i',l_2},\\
E\{N_1N_2'|z^\textrm{sd}_i,z^\textrm{sd}_{i'}\}&=&2M_3b_{i',i}z^\textrm{sd}_i\sum_{l\neq i,i'}b_{i,l}^2b_{i',l},\\
E\{N_1N_3'|z^\textrm{sd}_i,z^\textrm{sd}_{i'}\}&=&b_{i',i}^2(z^\textrm{sd}_i)^2\sum_{l\neq i,i'}b_{i,l}^2,\\
E\{N_2N_1'|z^\textrm{sd}_i,z^\textrm{sd}_{i'}\}&=&2M_3b_{i,i'}z^\textrm{sd}_{i'}\sum_{l\neq i,i'}b_{i',l}^2b_{i,l},\\
E\{N_2N_2'|z^\textrm{sd}_i,z^\textrm{sd}_{i'}\}&=&4b_{i,i'}^2z^\textrm{sd}_{i'}z^\textrm{sd}_i\sum_{l\neq i,i'}b_{i,l}b_{i',l},\\
E\{N_2N_3'|z^\textrm{sd}_i,z^\textrm{sd}_{i'}\}&=&E\{N_3N_2'|z^\textrm{sd}_i,z^\textrm{sd}_{i'}\}=0,\\
E\{N_3N_1'|z^\textrm{sd}_i,z^\textrm{sd}_{i'}\}&=&b_{i,i'}^2(z^\textrm{sd}_{i'})^2\sum_{l\neq i,i'}b_{i',l}^2,\\
E\{N_3N_3'|z^\textrm{sd}_i,z^\textrm{sd}_{i'}\}&=&b_{i,i'}^2b_{i',i}^2(z^\textrm{sd}_i)^2(z^\textrm{sd}_{i'})^2.
\end{eqnarray*}
Therefore, it can be shown that
\begin{eqnarray*}
&&\sum_{i\neq i'}[E\{D_{i}D_{i'}\}-E\{D_i\}E\{D_{i'}\}]\\
&=&4M_4\sum_{i\neq i'}\sum_{l\neq i,i'}b_{i,l}^2b_{i',l}^2+
4\sum_{i\neq i'}\sum_{\substack{l_1,l_2\neq i,i'\\ l_1\neq l_2}}b_{i,l_1}^2b_{i',l_2}^2\\
&&+8\sum_{i\neq i'}\sum_{\substack{l_1,l_2\neq i,i'\\ l_1\neq l_2}}b_{i,l_1}b_{i,l_2}b_{i',l_1}b_{i',l_2}
+4M_3^2\sum_{i\neq i'}b_{i,i'}\sum_{l\neq i,i'}b_{i,l}^2b_{i',l}\\
&&+4M_3^2\sum_{i\neq i'}b_{i',i}\sum_{l\neq i,i'}b_{i,l}b_{i',l}^2
+2(1+M_4)\sum_{i\neq i'}b_{i,i'}^2\sum_{l\neq i,i'}b_{i,l}^2\\
&&+2(1+M_4)\sum_{i\neq i'}b_{i,i'}^2\sum_{l\neq i,i'}b_{i',l}^2
+4M_3^2\sum_{i\neq i'}b_{i',i}^2\sum_{l\neq i,i'}b_{i,l}b_{i',l}\\
&&+(1+M_4)^2\sum_{i\neq i'}b_{i,i'}^4-4\sum_{i\neq i'}\sum_{l\neq i}b_{i,l}^2\sum_{l\neq i'}b_{i',l}^2\\
&\le&4M_4\sum_{i\neq i'}\sum_{l\neq i,i'}b_{i,l}^2b_{i',l}^2
+8\sum_{i\neq i'}\sum_{\substack{l_1,l_2\neq i,i'\\ l_1\neq l_2}}b_{i,l_1}b_{i,l_2}b_{i',l_1}b_{i',l_2}
+4M_3^2\sum_{i\neq i'}b_{i,i'}\sum_{l\neq i,i'}b_{i,l}^2b_{i',l}\\
&&+4M_3^2\sum_{i\neq i'}b_{i',i}\sum_{l\neq i,i'}b_{i,l}b_{i',l}^2
+2(1+M_4)\sum_{i\neq i'}b_{i,i'}^2\sum_{l\neq i,i'}b_{i,l}^2\\
&&+2(1+M_4)\sum_{i\neq i'}b_{i,i'}^2\sum_{l\neq i,i'}b_{i',l}^2
+4M_3^2\sum_{i\neq i'}b_{i',i}^2\sum_{l\neq i,i'}b_{i,l}b_{i',l}
+(1+M_4)^2\sum_{i\neq i'}b_{i,i'}^4\\
&\le&(M_4^2+12M_3^2+10M_4+13)\textrm{trace}(B^4).
\end{eqnarray*}
The last inequality holds because
each term in the summation is bounded by $\textrm{trace}(B^4)$
multiplied by suitable constants.

Since $B=(A-A_0)/s_c$, we have
$B^2\le 2(A^2+A_0^2)/s_c^2$ and $B^4\le8(A^4+A_0^4)/s_c^4$. So it holds that
\[
\textrm{trace}(B^4)\le 16s_c^{-4}\textrm{trace}(A^4),
\]
where the last inequality follows from the trivial fact
$\textrm{trace}(A^4)\ge\sum_{i=1}^ca_{i,i}^4$.
From the above analysis, we get that
\begin{eqnarray}\label{J1}
|J_1|\le C_2(16M_4^2+192M_3^2+163M_4+213)\psi_c^2s_c^{-4}\textrm{trace}(A^4).
\end{eqnarray}

For $J_2$, it holds that
\begin{eqnarray*}
|J_2|&=&\big|2\sum_{i=1}^c\int_0^1(1-t)E\{\dddot{g}(Q_2+t(Q_2'-Q_2))(\widetilde{z}^\textrm{sd}_i-z^\textrm{sd}_i)^3(e_i^TBz^\textrm{sd})^3\}\big|\\
&\le&2\|\dddot{g}\|_{\sup}\sum_{i=1}^cE\{|\widetilde{z}^\textrm{sd}_i-z^\textrm{sd}_i|^3|e_i^TBz^\textrm{sd}|^3\}\\
&=&2\|\dddot{g}\|_{\sup}\sum_{i=1}^cE\{|\widetilde{z}^\textrm{sd}_i-z^\textrm{sd}_i|\}E\{|e_i^TBz^\textrm{sd}|^3\}\\
&\le&32C_3E\{|z^\textrm{sd}_i|^3\}\psi_c^3\sum_{i=1}^cE\{|e_i^TBz^\textrm{sd}|^3\}\\
&\le&32C_3E\{|z^\textrm{sd}_i|^3\}\psi_c^3
\sum_{i=1}^cE\{|e_i^TBz^\textrm{sd}|^4\}^{1/2}E\{|e_i^TBz^\textrm{sd}|^2\}^{1/2}\\
&=&32\sqrt{3}C_3E\{|z^\textrm{sd}_i|^3\}\psi_c^3\sum_{i=1}^c(e_i^TB^2e_i)^{3/2}\\
&\le&32\sqrt{3}C_3E\{|z^\textrm{sd}_i|^3\}\psi_c^3\sqrt{\sum_{i=1}^c(e_i^TB^2e_i)^2\sum_{i=1}^c e_i^TB^2e_i}\\
&\le&32\sqrt{3/2}C_3E\{|z^\textrm{sd}_i|^3\}\psi_c^3\sqrt{\textrm{trace}(B^4)}\\
&\le&128\sqrt{3/2}C_3E\{|z^\textrm{sd}_i|^3\}\psi_c^3s_c^{-2}\sqrt{\textrm{trace}(A^4)}.
\end{eqnarray*}

By (\ref{asymp:dist:eqn:0}), (\ref{J1}) and $s_c^2\asymp h^{-1}$,
we have
\begin{eqnarray*}
|J_1|&\lesssim&C_2(16M_4^2+192M_3^2+163M_4+213)\psi_c^2h,\\
|J_2|&\lesssim& 128\sqrt{3/2}C_3E\{|z^\textrm{sd}_i|^3\}\psi_c^3h^{1/2}.
\end{eqnarray*}
By (\ref{asymp:dist:eqn:2}) the following holds uniformly for $u\in\bbR$:
\begin{equation}\label{asymp:dist:eqn:3}
E\{G_u(Q_2)\}-E\{G_u(V)\}\rightarrow0,\,\,\,\,c\rightarrow\infty.
\end{equation}
Similarly, for $\widetilde{G}_u(s)=g^\star_0(\psi_n(s-u)+1)$,
it can be shown that the following statement holds uniformly for $u\in\bbR$:
\begin{equation}\label{asymp:dist:eqn:4}
E\{\widetilde{G}_u(Q_2)\}-E\{\widetilde{G}_u(V)\}\rightarrow0,\,\,\,\,c\rightarrow\infty.
\end{equation}
By elementary facts, we have
\begin{eqnarray}\label{asymp:dist:eqn:5}
P(Q_2\le u)&\le& E\{G_u(Q_2)\}\le P(Q_2\le u+\psi_c^{-1}),\nonumber\\
P(V\le u)&\le& E\{G_u(V)\}\le P(V\le u+\psi_c^{-1}),\nonumber\\
P(Q_2\le u-\psi_c^{-1})&\le&E\{\widetilde{G}_u(Q_2)\}\le P(Q_2\le u),\nonumber\\
P(V\le u-\psi_c^{-1})&\le&E\{\widetilde{G}_u(V)\}\le P(V\le u).
\end{eqnarray}
By (\ref{asymp:dist:eqn:3}), (\ref{asymp:dist:eqn:4}) and (\ref{asymp:dist:eqn:5}),
the following statements hold uniformly for $u\in\bbR$,
\begin{eqnarray*}
P(Q_2\le u)-P(V\le u)&\le& E\{G_u(Q_2)-G_u(V)\}+P(V\le u+\psi_c^{-1})-P(V\le u)\rightarrow0,\\
P(V\le u)-P(Q_2\le u)&\le& E\{\widetilde{G}_u(V)-\widetilde{G}_u(Q_2)\}+P(V\le u)-P(V\le u-\psi_c^{-1})\rightarrow0.
\end{eqnarray*}
Hence, as $c$ tends to infinity,
\[
\sup_{u\in\bbR}|P(Q_2\le u)-P(V\le u)|\rightarrow0.
\]
This, together with $Q_1=o_P(1)$, proves
\begin{equation} \label{eq limit distribution}
\frac{cT^\star_{\mu^\star,t,c}-\textrm{trace}(A)\tau^{\star2}_k}{s_c\tau^{\star2}_k}\overset{d}{\longrightarrow}N(0,1),
\,\,\textrm{as $c\rightarrow\infty$.}
\end{equation}

Let $z_i$'s and $T_{\mu, t, c}$ be the quantized samples and testing statistics in Theorem \ref{asymp:distribution}, then we have
\begin{eqnarray*}
\frac{cT_{\mu,t,c}-\sum_{i=1}^c a_{i,i}\widehat{\tau}_k^2}{s_c\widehat{\tau}_k^2}
&=& \frac{cT_{\mu,t,c}-cT^\star_{\mu^\star,t,c}}{s_c\widehat{\tau}_k^2} + \frac{cT^\star_{\mu^\star,t,c}-\sum_{i=1}^c a_{i,i}\widehat{\tau}_k^2}{s_c\widehat{\tau}_k^2}\\
&=& R_1 + R_2.
\end{eqnarray*}
We will analyze these two terms separately. For $R_1$, one has that


\begin{eqnarray}
\label{eq:a}
\frac{cT_{\mu,t,c}-cT^\star_{\mu^\star,t,c}}{s_c\widehat{\tau}_k^2} &=& \frac{z^TAz-z^{^\star T}Az^\star }{s_c\widehat{\tau}_k^2}= \frac{\Delta^{\star T} A \Delta^{\star}}{s_c\widehat{\tau}_k^2} + \frac{2\Delta^{\star T} A z^\star}{s_c\widehat{\tau}_k^2},
\end{eqnarray}
where $\Delta^{\star} = (\Delta^\star_1, \ldots, \Delta^\star_c)^T$ with $\Delta^\star_i=z_i-z^\star_i$ which satisfies $E(\Delta^{\star2}_i)\le C_k^2(t) + 1/n$ under Condition~{\bf B}.

For the first term, since $\|\Delta^\star\|^2\leq O_p\left(cC_k(t)^2 + cn^{-1}\right)$ and $A\leq I_c$, it follows that
\[
\frac{\Delta^{\star T} A \Delta^{\star}}{s_c\widehat{\tau}_k^2}\le \frac{ \|\Delta^\star\|^2}{s_c\widehat{\tau}_k^2}\le \frac{c}{s_c\widehat{\tau}_k^2} O_p\left(C_k(t)^2 + n^{-1}\right)= O_p\left(\frac{ cC_k(t)^2}{h^{-1/2}c/n}\right)= O_p\left(n h^{1/2}C_k(t)^2\right)=o_p(1),
\]
where the last equality follows from the condition $b\gg\log_2\left(\sqrt{n\T_nh^{1/2}}\right)$, which implies that
\begin{align*}
C_k(t)^2 \asymp \frac{\T_n}{2^{2b}} \ll \frac{\T_n}{(nh^{1/2})\T_n} = (nh^{1/2})^{-1}.     
\end{align*}

For the second term in~\eqref{eq:a}, using the fact that $(\Delta^\star_i,z_i^\star)^T$,  $(\Delta^\star_j,z_j^\star)^T$ are independent if $i\ne j$, and $Ez^\star=0$,  it is straightforward to show that
\[
\begin{split}
    E\left((\Delta^{\star}-E\Delta^\star)^T A z^{\star}\right)^2=&\sum_{i=1}^c a_{ii}^2 E((\Delta_i^{\star}-E\Delta_i^\star) z_i^\star)^2+ \sum_{i=1}^c\sum_{j=1}^c a_{ij}^2 E((\Delta_i^{\star}-E\Delta_i^\star)(\Delta_i^{\star}-E\Delta_i^\star) z_j^{\star} z_j^*)\\
    &+\sum_{i=1}^c\sum_{j=1}^c a_{ij}a_{ji} E((\Delta_i^{\star}-E\Delta_i^\star) z_j^\star(\Delta_j^{\star}-E\Delta_j^\star)z_i^\star)\\&+\sum_{i=1}^c\sum_{j=1}^c a_{ii}a_{jj} E((\Delta_i^{\star}-E\Delta_i^\star) z_i^\star(\Delta_j^{\star}-E\Delta_j^\star) z_j^\star)\\
    &+\sum_{i=1}^c\sum_{j=1}^c a_{jj}a_{ii} E((\Delta_j^{\star}-E\Delta_j^\star) z_j^\star(\Delta_i^{\star}-E\Delta_i^\star) z_i^\star)\\
    \le&\sum_{i=1}^c a_{ii}^2C_k(t)^2\tau_k^{\star 2}+ \sum_{i=1}^c\sum_{j=1}^c a_{ij}^2 C_k(t)^2\tau_k^{\star 2}\\&+\sum_{i=1}^c\sum_{j=1}^c a_{ij}a_{ji}C_k(t)^2\tau_k^{\star 2}+\sum_{i=1}^c\sum_{j=1}^c a_{ii}a_{jj} C_k(t)^2\tau_k^{\star 2}+\sum_{i=1}^c\sum_{j=1}^c a_{jj}a_{ii} C_k(t)^2\tau_k^{\star 2}\\
    =&\left(\sum_{i=1}^c a_{ii}^2\right)C_k(t)^2\tau_k^{\star 2}+2\textrm{trace}(A^2) C_k(t)^2\tau_k^{\star 2}+2[\textrm{trace}(A)]^2 C_k(t)^2\tau_k^{\star 2}.
\end{split}
\]
In the proof of \eqref{asymp:dist:eqn:0}, we have shown that $a_{ii}\asymp(ch)^{-1}$, $\textrm{trace}(A)\asymp \textrm{trace}(A^2)\asymp h^{-1}$, thence we have that
\[
E\left((\Delta^{\star}-E\Delta^\star)^T A z^{\star}\right)^2\lesssim h^{-2}C_k(t)^2\tau_k^{\star 2},
\]
which implies that
\[
\frac{2(\Delta^{\star}-E\Delta^\star)^T A z^\star}{s_c\widehat{\tau}_k^2}=O_p\left(\frac{h^{-1}C_k(t)\tau_k^{\star}}{s_c\widehat{\tau}_k^2}\right)=O_p\left(\frac{h^{-1}C_k(t)\sqrt{c/n}}{h^{-1/2} c/n}\right) =O_p\left(C_k(t)\sqrt{\frac{n}{ch}}\right).
\]
Furthermore, since $A\le I_c$, we have that
\[
E\left[(E\Delta^{\star})^T A z^{\star}\right]^2=E\left[(E\Delta^{\star})^T A z^{\star}z^{\star T}A(E\Delta^{\star})\right]=\tau_k^{\star 2}(E\Delta^{\star})^TA^2(E\Delta^{\star})\le \tau_k^{\star 2}\|E\Delta^{\star}\|^2\le c C_k(t)^2 \tau_k^{\star 2},
\]
which implies that
\[
\frac{2(E\Delta^\star)^T A z^\star}{s_c\widehat{\tau}_k^2}=O_p\left(\frac{\sqrt{c}C_k(t)\tau_k^{\star}}{s_c\widehat{\tau}_k^2}\right)=O_p\left(\frac{\sqrt{c}C_k(t)\sqrt{c/n}}{h^{-1/2} c/n}\right) =O_p\left(C_k(t)\sqrt{nh} \right),
\]
and consequently,
\begin{equation}\label{eq:boundtmp}
\frac{2 \Delta^{\star T}A z^\star}{s_c\widehat{\tau}_k^2} =O_p\left(C_k(t)\sqrt{\frac{n}{ch}}+C_k(t)\sqrt{nh}\right).    
\end{equation}

Using the condition $b\gg\log_2\left(\sqrt{(nh^{1/2}+n(ch)^{-1})\T_n}\right)$, and the fact that $h\to 0$, one has that
\begin{align*}
C_k(t)^2 \asymp \frac{\T_n}{2^{{2b}}} \ll \frac{\T_n}{(nh^{1/2}+n(ch)^{-1})\T_n}\le \frac{\T_n}{(nh+n(ch)^{-1})\T_n} =(nh+n(ch)^{-1})^{-1},     
\end{align*}
which gives that $\frac{2\Delta^{\star T} A z^\star}{s_c\widehat{\tau}_k^2}=o_p(1)$. Plugging this back to equation~\eqref{eq:a}, we have that $R_1=o_p(1)$.


Now we analyze $R_2$, by Lemma \ref{hat_tau_k}, we have 
\begin{align*}
\frac{cT^\star_{\mu^\star,t,c}-\sum_{i=1}^c a_{i,i}\widehat{\tau}_k^2}{s_c\widehat{\tau}_k^2} &= \frac{cT^\star_{\mu^\star,t,c}-\textrm{trace}(A)\tau^{\star2}_k+\textrm{trace}(A)\tau^{\star2}_k-\textrm{trace}(A)\widehat{\tau}_k^2}{s_c\tau^{\star2}_k}\frac{\tau^{\star2}_k}{\widehat{\tau}_k^2}\\
& = \frac{cT^\star_{\mu^\star,t,c}-\textrm{trace}(A)\tau^{\star2}_k}{s_c\tau^{\star2}_k} + \frac{\textrm{trace}(A)(\tau_k^{\star2}-\widehat{\tau}_k^2)}{s_c\tau^{\star2}_k} + o_p(1)\\
&=\frac{cT^\star_{\mu^\star,t,c}-\textrm{trace}(A)\tau^{\star2}_k}{s_c\tau^{\star2}_k} + O_p(C_k(t)h^{-1/2}).
\end{align*}
Since $k\gg \sqrt{(nh^{1/2}+n(ch)^{-1})\T_n}$, one has that
\[
C_kh^{-1/2}\ll \frac{\sqrt{\T_n}}{\sqrt{(nh^{1/2}+\frac{n}{ch})\T_n h}}=\frac{1}{\sqrt{(nh^{3/2}+\frac{n}{c}) }}\le \sqrt{c/n}=O(1).
\]
From (\ref{eq limit distribution}), we get the desired result.


\end{proof}

\begin{proof}\textbf{of Proposition \ref{tau_k^2:lemma}:}
Suppose $p_\sigma(\cdot)$ is the density of $\sigma\epsilon_1$. By direct calculations, we have
\begin{equation}\label{eq:properity4}
E([\widetilde{n}^{-1}\sum_{j=1}^{\widetilde{n}}Q(\epsilon_j)]^4) = 3\sigma^4(\frac{1}{\widetilde{n}^2}-\frac{1}{\widetilde{n}^3})E^2\{Q(\sigma\epsilon_1)^2\} + \frac{\sigma^2E\{Q(\sigma\epsilon_1)^4\}}{\widetilde{n}^3}.    
\end{equation}
For the first term, under Condition (B), we know $E\{Q(\sigma\epsilon_1)^2\} = O(1)$, which implies $3\sigma^4(\frac{1}{\widetilde{n}^2}-\frac{1}{\widetilde{n}^3})E^2\{Q(\sigma\epsilon_1)^2\} = O(c^2n^{-2})$. For the second term, we have that
\begin{eqnarray*}
E\{Q(\sigma\epsilon_1)^4\} &=& \sum_{j=2}^{k-1} \mu_j^4P(\sigma\epsilon_1\in R_j) + \mu_1^4P(\sigma\epsilon_1\in R_1) + \mu_k^4P(\sigma\epsilon_1\in R_k)\\
&=&\sum_{j=2}^{k-1}\int_{R_j}\mu_j^4p_\sigma(x)dx + \mu_1^4P(\sigma\epsilon_1\in R_1) + \mu_k^4P(\sigma\epsilon_1\in R_k)\\
&\le&\sum_{j=2}^{k-1}\int_{R_j}(|x| + C_k(t))^4p_\sigma(x)dx + \mu_1^4P(\sigma\epsilon_1\in R_1) + \mu_k^4P(\sigma\epsilon_1\in R_k)\\
&\le&8\sum_{j=1}^{k}\int_{R_j}\sigma^4x^4p_\sigma(x)dx + 8C_k(t)^4 + \mu_1^4P(\sigma\epsilon_1\in R_1) + \mu_k^4P(\sigma\epsilon_1\in R_k)\\
&=&8\sigma^4E\{\epsilon_1^4\} + 8C_k(t)^4 + \mu_1^4P(\sigma\epsilon_1\in R_1) + \mu_k^4P(\sigma\epsilon_1\in R_k).
\end{eqnarray*}
Since $C_k(t)^4=o(1)$, $E\{\epsilon_1^4\}= O(nc^{-1})$ and $\mu_j^4P(\sigma\epsilon_1\in R_j(t)) = O(nc^{-1})$ for $j=1,k$, one has that $\frac{\sigma^2E\{Q(\sigma\epsilon_1)^4\}}{\widetilde{n}^3} = O(c^2n^{-2})$. Plugging this back to equation (\ref{eq:properity4}), we get the desired result.
\end{proof}

\begin{proof}\textbf{of Theorem \ref{power:thm}:}  Without loss of generality, we only consider the case $g_*(x)=0$ in~\eqref{H0}.
By Condition (\textbf{B}),
we have that $\min\{t_1^2,t_{k-1}^2\}>4c_s^2\rho^2$, as $n \rightarrow\infty$.
Consider the following event:
\begin{equation}\label{event:E}
\mathcal{E}_1=\{\textrm{$\sigma|\epsilon_i|+c_s\rho\le\sqrt{\T_n}$ for all $1\le i\le n$}\}.
\end{equation}
It is easy to show that $P(\mathcal{E}_1)\rightarrow1$ as $n\rightarrow\infty$ under Condition (\textbf{B}).
Thus, we choose $N_\eta'$ s.t. $P(\mathcal{E}_1)\ge1-\eta/3$ if $c\ge N_\eta'$.
Throughout the proof, we suppose that $g\in S_\rho^m(\mathbb{I})$ is the function that generates the samples and $f$ is the integral function of $g$ defined in (\ref{eq:f0}). Let $\omega_i=\widetilde{z}_i-\widetilde{z}_i^0, \omega=(\omega_1, \ldots, \omega_c)^T$. It is straightforward to see that $\omega_i={z}_i-{z}_i^0$ under event $\mathcal{E}_1$. Because 
\begin{align*}
C_k(t)^2 \asymp \frac{\T_n}{2^{{2b}}} \ll \frac{\T_n}{(nh^{1/2}+n(ch)^{-1})\T_n}\le \frac{\T_n}{(nh+n(ch)^{-1})\T_n} =(nh+n(ch)^{-1})^{-1} \ll \tau_k^2,     
\end{align*}
it follows by Lemma \ref{power:thm:lemma:2} that there exists $N''$ s.t., when $c\ge N''$, the following equation holds
\begin{equation}
\sup_{g\in S_\rho^m(\mathbb{I})}\frac{E\{|(\widetilde{z}-\widetilde{z}^0)^TAz^0|^2\}}
{(1+(ch^2)^{-1})\tau_k^2\sum_{i=1}^c(|f(i/c)|+4C_k(t) + \zeta)^2}\le8,\,\,\textrm{as $c\rightarrow\infty$}.
\end{equation}
Consider the event
\[
\mathcal{E}_{2}=\left\{|\omega^T A z^0|\le C_\eta'\sqrt{1+(ch^2)^{-1}}\tau_k\sqrt{\sum_{i=1}^c(|f(i/c)|+4C_k(t)+\zeta)^2}\right\},
\]
where $C_\eta'=\sqrt{24/\eta}$. 

Then
\[
1-P(\mathcal{E}_{2})\le \frac{E\{|\omega^T A z^0|^2\}}{(C_\eta')^2(1+(ch^2)^{-1})\tau_k^2\sum_{i=1}^c(|f(i/c)|+4C_k(t)+\zeta)^2}
\le \eta/3,
\]
which implies that $P\left(\mathcal{E}_{2}\right)\ge 1-\eta/3$.

Let $\widehat{\tau}_{k,0}^2$ be the estimated variance under the null. Then one has that
\begin{align*}
\frac{(z^0)^TAz^0-\textrm{trace}(A)\widehat{\tau}_{k}^2}{s_c\widehat{\tau}_{k}^2} &= \frac{(z^0)^TAz^0-\textrm{trace}(A)\widehat{\tau}_{k,0}^2+ \textrm{trace}(A)\widehat{\tau}_{k,0}^2 - \textrm{trace}(A)\widehat{\tau}_{k}^2}{s_c\widehat{\tau}_{k,0}^2}\frac{\widehat{\tau}_{k,0}^2}{\widehat{\tau}_{k,0}^2}\\
& = \frac{(z^0)^TAz^0-\textrm{trace}(A)\widehat{\tau}_{k,0}^2}{s_c\widehat{\tau}^{2}_{k,0}} + \frac{\textrm{trace}(A)(\widehat{\tau}_{k,0}^{2}-\widehat{\tau}_k^2)}{s_c\widehat{\tau}^{2}_{k,0}} + o_p(1)\\
&=\frac{(z^0)^TAz^0-\textrm{trace}(A)\widehat{\tau}_{k,0}^2}{s_c\widehat{\tau}^{2}_{k,0}} + O_p(C_k(t)h^{-1/2}).
\end{align*}
Since $k\gg \sqrt{(nh^{1/2}+n(ch)^{-1})\T_n}$, one has that
\[
C_kh^{-1/2}\ll \frac{\sqrt{\T_n}}{\sqrt{(nh^{1/2}+\frac{n}{ch})\T_n h}}=\frac{1}{\sqrt{(nh^{3/2}+\frac{n}{c}) }}\le \sqrt{c/n}=O(1).
\]
It follows from Theorem \ref{asymp:distribution} that 
\[
\frac{(z^0)^TAz^0-\textrm{trace}(A)\widehat{\tau}_{k}^2}{s_c\widehat{\tau}_{k}^2}=O_P(1).
\]
Hence, there exists $C_\eta''>0$ s.t. $P(\mathcal{E}_3)\ge 1-\eta/3$ for all $c\ge N_\eta'$ and $N''$, where
\[
\mathcal{E}_3=\left\{\bigg|\frac{(z^0)^TAz^0-\textrm{trace}(A)\widehat{\tau}_k^2}{s_c\widehat{\tau}_k^2}\bigg|\le C_\eta''\right\}.
\]
Let $\mathcal{E}=\mathcal{E}_1\cap\mathcal{E}_{2}\cap\mathcal{E}_3$,
then $P(\mathcal{E})\ge 1-\eta$ for any $c\ge N_\eta'$ and $N''$.

Suppose $g\in S_\rho^m(\mathbb{I})$ satisfies $\|g\|_c\ge C_\eta\delta_*$,
where
\begin{align}\label{C:eta}
C_\eta=\max\left\{6\varrho\rho^2,384,(72C_\eta')^2,6(C_\eta''+z_{1-\alpha/2}+1)\right\},
\end{align}
\begin{align} \label{delta}
\delta_* = \sqrt{c^{-1}\tau_k^2(1+s_c+(ch^2)^{-1})+\lambda+c^{-2m}+C_k(t)^2+\zeta^2},
\end{align}
where $\tau_k^2=O(\widetilde{n}^{-1})$, $\zeta= \max\limits_{i = 1,\ldots,c}\big|f(i/c) - \frac{1}{\widetilde{n}}\sum_{j=(i-1)\widetilde{n}+1}^{i\widetilde{n}}g(j/n)\big|=O(n^{-1})$.

It follows from Lemma \ref{power:thm:lemma:1} that, on $\mathcal{E}$, $|\omega_i-f(i/c)|\le 4C_k(t)+\zeta$.
Since $A\le I_c$, we get that
\[
(\omega-\textbf{f})^TA(\omega-\textbf{f})\le\sum_{i=1}^c(\omega_i-f(i/c))^2\le 32cC_k(t)^2+2c\zeta^2,
\]
which, together with Lemma \ref{power:thm:lemma:4}, leads to that
\begin{eqnarray*}
&&\omega^T A\omega\ge\frac{1}{2}\textbf{f}^TA\textbf{f}-(\omega-\textbf{f})^TA(\omega-\textbf{f})\\
&=&\frac{c}{2}\|g\|_c^2-\frac{1}{2}\textbf{f}^T(I_c-A)\textbf{f}-(\omega-\textbf{f})^TA(\omega-\textbf{f})\\
&\ge&\frac{c}{2}\|g\|_c^2-\frac{1}{2}\varrho c(\lambda+c^{-2m})\rho^2-32cC_k(t)^2-2c\zeta^2.
\end{eqnarray*}
Therefore, on $\mathcal{E}$, we have
\begin{eqnarray}
&&\frac{cT_{\mu,t, c}-\textrm{trace}(A)\widehat{\tau}_k^2}{s_c\widehat{\tau}_k^2}\nonumber\\
&=&\frac{z^TAz-(z^0)^TAz^0}{s_c\widehat{\tau}_k^2}+\frac{(z^0)^TAz^0-\textrm{trace}(A)\widehat{\tau}_k^2}{s_c\widehat{\tau}_k^2}\nonumber\\
&\ge&\frac{z^TAz-(z^0)^TAz^0}{s_c\widehat{\tau}_k^2}-C_\eta''\nonumber\\
&=&\frac{\omega^TA\omega+2\omega^TAz^0}{s_c\widehat{\tau}_k^2}-C_\eta''\nonumber\\
&\ge&\textstyle\frac{\frac{c}{2}\|g\|_c^2-\frac{1}{2}\varrho c(\lambda+c^{-2m})\rho^2-32cC_k(t)^2-2c\zeta^2-
2C_\eta'\tau_k\sqrt{(1+\frac{1}{ch^2})\sum_{i=1}^c(|f(i/c)|+4C_k(t)+\zeta)^2}}{s_c\widehat{\tau}_k^2}-C_\eta''\nonumber\\
&\ge&\textstyle\frac{\frac{c}{2}\|g\|_c^2-\frac{1}{2}\varrho c(\lambda+c^{-2m})\rho^2-32cC_k(t)^2-2c\zeta^2-6C_\eta'\sqrt{1+(ch^2)^{-1}}\tau_k\sqrt{c}\|g\|_c}
{s_c\widehat{\tau}_k^2}-C_\eta''\label{pow:thm:point:1}\\
&=&\frac{\frac{c}{2}\|g\|_c^2\left(
1-\frac{\frac{1}{2}\varrho c(\lambda+c^{-2m})\rho^2}{\frac{c}{2}\|g\|_c^2}
-\frac{32cC_k(t)^2}{\frac{c}{2}\|g\|_c^2}-\frac{2c\zeta^2}{\frac{c}{2}\|g\|_c^2}-
\frac{6C_\eta'\sqrt{1+(ch^2)^{-1}}\tau_k\sqrt{c}\|g\|_c}{\frac{c}{2}\|g\|_c^2}\right)}{s_c\widehat{\tau}_k^2}-C_\eta''\nonumber\\
&\ge&\frac{\frac{c}{6}\|g\|_c^2}{s_c\widehat{\tau}_k^2}-C_\eta''>z_{1-\alpha/2},\label{pow:thm:point:2}
\end{eqnarray}
where (\ref{pow:thm:point:1}) follows from $C_\eta>12$ (see (\ref{C:eta})), i.e.,
\[
\sum_{i=1}^c f(i/c)^2=c\|g\|_c^2\ge cC_\eta\delta_{*}^2\ge 48cC_k(t)^2, ,\,\,\, \sum_{i=1}^c f(i/c)^2=c\|g\|_c^2\ge cC_\eta\delta_{*}^2\ge 3c\zeta^2,
\]
which leads to
\[
\sum_{i=1}^c(|f(i/c)|+4C_k(t)+\zeta)^2\le 3\sum_{i=1}^cf(i/c)^2+48cC_k(t)^2 +3c\zeta^2\le 9\sum_{i=1}^cf(i/c)^2=9c\|g\|_c^2;
\]
and (\ref{pow:thm:point:2}) follows from (\ref{C:eta}), i.e.,
\begin{eqnarray*}
&&\frac{\frac{1}{2}\varrho c(\lambda+c^{-2m})\rho^2}{\frac{c}{2}\|g\|_c^2}\le 1/6,\\
&&\frac{32cC_k(t)^2}{\frac{c}{2}\|g\|_c^2}\le 1/6,\\
&&\frac{2c\zeta^2}{\frac{c}{2}\|g\|_c^2}\le 1/6,\\
&&\frac{6C_\eta'\sqrt{1+(ch^2)^{-1}}\tau_k\sqrt{c}\|g\|_c}{\frac{c}{2}\|g\|_c^2}\le 1/6.
\end{eqnarray*}
Then for any $g\in S_\rho^m(\mathbb{I})$ satisfying $\|g\|_c\ge C_\eta\delta_*$, where $C_\eta, \delta_*$ are defined in (\ref{C:eta}) (\ref{delta}), there exist $N_\eta\equiv\max\{N_\eta',N''\}$ such that for any $c\ge N_\eta$, we have
\begin{eqnarray*}
&&P(\textrm{reject $H_0$}|\textrm{$H_1$ is true})\\
&\ge&P\left(\textrm{$\mathcal{E}$ and $\bigg|\frac{cT_{\mu,t,c}-\textrm{trace}(A)\widehat{\tau}_k^2}{s_c\widehat{\tau}_k^2}\bigg|
\ge z_{1-\alpha/2}$}\right)\\
&=&P(\mathcal{E})\ge 1-\eta.
\end{eqnarray*}
In the end, since $h = \lambda^{1/(2m)}, nh^{1/2}C_k(t)^2=o(1), ch\rightarrow \infty, \zeta = O(n^{-1})$, immediately, one has that $\|g\|_c\ge C_\eta\delta_*$ is equivalent as $\|g\|_c\ge C_\eta\delta_{n, c, \lambda}$. This proves the desired result.
\end{proof}

\begin{proof}\textbf{of Theorem \ref{corollary linear}:}
Suppose $g = \beta x + \alpha$ is the ``true'' function under $H^\textrm{linear}_0$ and $y_i = g(i/n) + \sigma\epsilon_i$, $i=1, \ldots, n$. We use $\widehat{g}$ to denote the least-square estimator of $g$ based on $Q(y_i)$'s. Consider the following two events:
\begin{eqnarray*}
\mathcal{E}_1 &=&\{\textrm{$\sigma|\epsilon_i|+c_s\rho\le\sqrt{\T_n}$ for all $1\le i\le n$}\},\\
\mathcal{E}_2 &=&\{\textrm{$|g(i/n) - \widehat{g}(i/n)|\le\sqrt{\T_n}$ for all $1\le i\le n$}\}.
\end{eqnarray*}
It is easy to show that $P(\mathcal{E}_1 \cap \mathcal{E}_2) \rightarrow 1$, as $n\rightarrow \infty$. Since $\min\{t_1^2,t_{k-1}^2\}=\T_n>4c_s^2\rho^2$ as $n \rightarrow\infty$, under event $\mathcal{E}_1 \cap \mathcal{E}_2$, for $j=1,\ldots,c$, one has that 
\begin{equation*}
    \sigma|\epsilon_j|\le\min\{|t_1|,|t_{k-1}|\} ,\,\,\, |y_j| \le \min\{|t_1|,|t_{k-1}|\}, \,\,\,\, |g(j/n) - \widehat{y}_j| \le \min\{|t_1|,|t_{k-1}|\}.
\end{equation*}
Furthermore, we have
\begin{eqnarray}
z^\textrm{linear}_i&\le&\frac{\sum_{j=(i-1)\widetilde{n}+1}^{i\widetilde{n}}Q(y^\textrm{linear}_j)}{\widetilde{n}} + C_k(t) \label{eq:linear1}\\
&=&\frac{\sum_{j=(i-1)\widetilde{n}+1}^{i\widetilde{n}}Q\big(Q(y_j) - \widehat{y}_j\big)}{\widetilde{n}} + C_k(t) \nonumber \\
&=& \frac{\sum_{j=(i-1)\widetilde{n}+1}^{i\widetilde{n}}Q\big(y_j - g(j/n) + Q(y_j) - y_j + g(j/n) - \widehat{y}_j\big)}{\widetilde{n}}+ C_k(t) \nonumber \\
&\leq& \frac{\sum_{j=(i-1)\widetilde{n}+1}^{i\widetilde{n}}Q\big(y_j - g(j/n)\big) + Q\big(Q(y_j) - y_j\big) + Q\big(g(j/n) - \widehat{y}_j\big)}{\widetilde{n}}  + 7C_k(t)\nonumber\\
&\leq& \frac{\sum_{j=(i-1)\widetilde{n}+1}^{i\widetilde{n}}Q(\sigma\epsilon_j)}{\widetilde{n}} + \frac{\sum_{j=(i-1)\widetilde{n}+1}^{i\widetilde{n}}Q\big(g(j/n) - \widehat{y}_j\big)}{\widetilde{n}} + 9C_k(t)\nonumber\\
&\le& z_i^0 + \varsigma_i +10C_k(t), \label{eq:linear2}
\end{eqnarray}
where $\varsigma_i = \frac{\sum_{j=(i-1)\widetilde{n}+1}^{i\widetilde{n}}Q\big(g_0(j/n) - \widehat{y}_j\big)}{\widetilde{n}}$ satisfying $\varsigma_i=O_p(1/n + C_k(t))$, and equations (\ref{eq:linear1}), (\ref{eq:linear2}) follow from (\ref{eq:quant:2}), (\ref{eq:quant_z0}). Let $z^0 = (z_1^0, \ldots, z_c^0)^T$, $\varsigma = (\varsigma_1, \ldots, \varsigma_c)^T$. Therefore, the test statistic
\begin{eqnarray*}
cT_\textrm{linear} &=& c\|\widehat{g}_{\textrm{linear}, \mu, t, c}^{\textrm{B}}\|^2=(z^{\textrm{linear}})^TAz^\textrm{linear} \\
&\leq & (z^0)^TAz^0 + \varsigma^TA\varsigma + 100\overrightarrow{C_k(t)}^T A \overrightarrow{C_k(t)} + 2\big((z^0)^TA\varsigma+10(z^0)^TA\overrightarrow{C_k(t)} + 10\varsigma^TA\overrightarrow{C_k(t)}\big)\\
&=& T_1 + T_2 + T_3 + T_4,
\end{eqnarray*}
where $\overrightarrow{C_k(t)}=(C_k(t), \ldots, C_k(t))^T$. Now we proceed to prove that $cT_{\textrm{linear}}$ is dominated by $T_1$. Using the fact that $z_i^0$,  $z_j^0$ are independent of each other if $i\ne j$, and $E(\{z_i^0\}^2) = O(\tau_k^2)=O(c/n), E(\{z_i^0\}^4) = O(c^2/n^2) $, then for the first term $T_1$, it is straightforward to show that
\[
\begin{split}
    E\left((z^0)^TAz^0\right)^2=&\sum_{i=1}^c a_{ii}^2 E(z_i^0)^4+ \sum_{i=1}^c\sum_{j=1}^c a_{ij}^2 E(z_i^0z_i^0 z_j^0z_j^0) +\sum_{i=1}^c\sum_{j=1}^c a_{ij}a_{ji} E(z_i^0 z_j^0z_j^0z_i^0)\\
    &+\sum_{i=1}^c\sum_{j=1}^c a_{ii}a_{jj} E(z_i^0 z_i^0z_j^0z_j^0) +\sum_{i=1}^c\sum_{j=1}^c a_{jj}a_{ii} E(z_j^0z_j^0z_i^0z_i^0)\\
    \asymp& c^2n^{-2}\left(\sum_{i=1}^c a_{ii}^2+ \sum_{i=1}^c\sum_{j=1}^c a_{ij}^2 +\sum_{i=1}^c\sum_{j=1}^c a_{ij}a_{ji}+\sum_{i=1}^c\sum_{j=1}^c a_{ii}a_{jj} +\sum_{i=1}^c\sum_{j=1}^c a_{jj}a_{ii}\right)\\
    =&c^2n^{-2}\left(\sum_{i=1}^c a_{ii}^2+2\textrm{trace}(A^2) +2[\textrm{trace}(A)]^2 \right).
\end{split}
\]
In the proof to achieve equation~\eqref{asymp:dist:eqn:0}, we have shown that $a_{ii}\asymp(ch)^{-1}$, $\textrm{trace}(A)\asymp \textrm{trace}(A^2)\asymp h^{-1}$, thence we have that
\begin{equation} \label{eq:linearT1}
    E\left((z^0)^TAz^0\right)^2\asymp c^2(nh)^{-2}.
\end{equation}
Furthermore, since $A\le I_c$, we have that
\[
\varsigma^TA\varsigma \le \sum_{i=1}^c\varsigma^2_i = \sum_{i=1}^c\Big[\frac{\sum_{j=(i-1)\widetilde{n}+1}^{i\widetilde{n}}Q\big(g_0(j/n) - \widehat{y}_j\big)}{\widetilde{n}}\Big]^2 = O_p\big(c/n + cC_k(t)^2\big),
\]
\[
\overrightarrow{C_k(t)}^T A \overrightarrow{C_k(t)}\le cC_k(t)^2.
\]
Since $C_k(t)^2 \asymp \frac{\T_n}{2^{{2b}}} \ll \frac{\T_n}{(nh^{1/2}+n(ch)^{-1})\T_n}\le \frac{\T_n}{(nh+n(ch)^{-1})\T_n} =(nh+n(ch)^{-1})^{-1}$, one has that
\[
cC_k(t)^2 \ll c(nh+n(ch)^{-1})^{-1}\le c(nh)^{-1}.
\]
Together with the fact that $cn^{-1} \ll c(nh)^{-1}$ and equation (\ref{eq:linearT1}), one has that
\[
T_2 \ll T_1, T_3 \ll T_1.
\]
Similarly, it can be shown that $T_4 \ll T_1$. Therefore, $cT_\textrm{linear}\asymp T_1$.  The dominated term $T_1$ in $cT_\textrm{linear}$ is nothing but $cT_{\mu, t, c}$ for testing $H_0: g_0(x) = 0$ based on $z^0$. Therefore, in keep with Lemma \ref{hat_tau_k}, the limiting distribution of $T_\textrm{linear}$ under $H_0^{\textrm{linear}}$ should have the same limiting distribution as $T_{\mu, t, c}$ under $H_0: g_0(x) = 0$. Thus, according to Theorem \ref{asymp:distribution} and Lemma \ref{hat_tau_k_linear}, the result is proved.
\end{proof}

\begin{proof}\textbf{of Theorem \ref{power:linearity}:}
The proof of Theorem \ref{power:linearity} is similar to Theorem \ref{power:thm}. Let $g$ be the function which generates the observations and $\mathcal{P}_{\mathcal{L}(\mathbb{I})}(g) = \argmin_{f\in \mathcal{L}(\mathbb{I})} \|g - f\|^2$ be the projection of $g(\cdot)$ to $\mathcal{L}(\mathbb{I})$. We further define $f(x)$ be the integral function associated with $g-\mathcal{P}_{\mathcal{L}(\mathbb{I})}(g)$, as defined in (\ref{eq:f0}), that is,
\[
f(x)=\frac{1}{2\Delta}\int_{\max(x-\Delta, 0)}^{\min(x+\Delta, 1)}[g-\mathcal{P}_{\mathcal{L}(\mathbb{I})}(g)](s)ds, \textrm{ for } x\in[0,1], \,\, \Delta=\frac{1}{c}.
\]
Therefore, $\|g-\mathcal{P}_{\mathcal{L}(\mathbb{I})}(g)\|_c = \sqrt{\sum_{i=1}^c f^2(i/c)/c}$.
To proceed, we first define $\widehat{g}^\star$ be the least squared estimator based on $Q\big(\mathcal{P}_{\mathcal{L}(\mathbb{I})}(g)(j/n) + \sigma\epsilon_j\big)$ which satisfies $|\widehat{g}(i/n) - \widehat{g}^\star(i/n)|\le C_k(t)$. Let $z^\textrm{linear}_{i,0}$ be the $z^\textrm{linear}_{i}$ based on $\mathcal{P}_{\mathcal{L}(\mathbb{I})}(g)$. According to (\ref{eq:quant:2}), onw has that
\begin{align*}
\left|z^\textrm{linear}_{i,0} - \widetilde{n}^{-1}\sum_{j=(i-1)\widetilde{n}+1}^{i\widetilde{n}}Q\Big( Q\big(\mathcal{P}_{\mathcal{L}(\mathbb{I})}(g)(j/n) + \sigma\epsilon_j\big)- \widehat{g}^\star(j/n)\Big)\right| \le C_k(t),\,\, \textrm{ for $i=1,\ldots,c$.}
\end{align*}
Let $z^\textrm{linear}_0=(z^\textrm{linear}_{1,0}, \ldots, z^\textrm{linear}_{c,0})^T$. Before proceeding, we first define some notations to ease the calculations. Define
\footnotesize
\begin{eqnarray*}
    \widetilde{z}^\textrm{linear}_{i,0} = z^\textrm{linear}_{i,0}\mathbbm{1}\Big(\max\{\sigma|\epsilon_j|+c_s\rho, |g(j/n) - \widehat{g}(j/n)|, |\mathcal{P}_{\mathcal{L}(\mathbb{I})}(g)(j/n) - \widehat{g}^\star(j/n)|\} \le \sqrt{\T_n}\textrm{ for all }j=(i-1)\widetilde{n}+1, \dots i\widetilde{n}\Big),\\
    \widetilde{z}^\textrm{linear}_{i} = z^\textrm{linear}_{i}\mathbbm{1}\Big(\max\{\sigma|\epsilon{j}|+c_s\rho, |g(j/n) - \widehat{g}(j/n)|, |\mathcal{P}_{\mathcal{L}(\mathbb{I})}(g)(j/n) - \widehat{g}^\star(j/n)|\} \le \sqrt{\T_n} \textrm{ for all }j=(i-1)\widetilde{n}+1, \dots i\widetilde{n})\Big).
\end{eqnarray*}
\normalsize
Similar to Lemma \ref{power:thm:lemma:1}, we want to find an upper bound of $|\widetilde{z}^\textrm{linear}_i - \widetilde{z}^\textrm{linear}_{i,0}|$. It is straightforward to show that
\begin{align}\label{eq_linear_proof_diff_z}
&|\widetilde{z}^\textrm{linear}_{i} - \widetilde{z}^\textrm{linear}_{i,0}|\\
&\le \Big|\widetilde{n}^{-1}\sum_{j=(i-1)\widetilde{n}+1}^{i\widetilde{n}}Q(y^\textrm{linear}_{j}) - Q\Big(Q\big( \mathcal{P}_{\mathcal{L}(\mathbb{I})}(g)(j/n) + \sigma\epsilon_j\big)  - \widehat{g}^\star(j/n)\Big)\Big| + 2C_k(t)\nonumber\\
&=\Big|\widetilde{n}^{-1}\sum_{j=(i-1)\widetilde{n}+1}^{i\widetilde{n}}Q\big(Q(y_j) - \widehat{g}(j/n)\big) - Q\Big( Q\big(\mathcal{P}_{\mathcal{L}(\mathbb{I})}(g)(j/n) + \sigma\epsilon_j\big)  - \widehat{g}^\star(j/n)\Big)\Big|+2C_k(t) \nonumber\\
&\leq \Big|\widetilde{n}^{-1}\sum_{j=(i-1)\widetilde{n}+1}^{i\widetilde{n}}Q(y_j) - \widehat{g}(j/n) -  \mathcal{P}_{\mathcal{L}(\mathbb{I})}(g)(j/n) - \sigma\epsilon_j  + \widehat{g}^\star(j/n)\Big|+ 6C_k(t) \nonumber\\
&\leq \Big|\widetilde{n}^{-1}\sum_{j=(i-1)\widetilde{n}+1}^{i\widetilde{n}}y_j  -  \mathcal{P}_{\mathcal{L}(\mathbb{I})}(g)(j/n) - \sigma\epsilon_j + \widehat{g}^\star(j/n) - \widehat{g}(j/n)\Big|+7C_k(t) \nonumber\\
&\le  \bigg|\widetilde{n}^{-1}\sum_{j=(i-1)\widetilde{n}+1}^{i\widetilde{n}}\Big(g(j/n) - \mathcal{P}_{\mathcal{L}(\mathbb{I})}(g)(j/n)\Big)\bigg|  + 8C_k(t)\nonumber\\
&\leq |f(i/c)| + 8C_k(t) + \zeta \nonumber,
\end{align}
where
\[
\zeta = \max_{1\le i\le c}\Big|f(i/c)-\frac{c}{n}\sum_{j=(i-1)\widetilde{n}+1}^{i\widetilde{n}}\big(g(j/n)-\mathcal{P}_{\mathcal{L}(\mathbb{I})}(g)(j/n)\big)\Big| = O(n^{-1}).
\]
Suppose that Condition (\textbf{B}) holds, and consider events
\begin{eqnarray*}
\mathcal{E}^*_1 &=&\{\textrm{$\sigma|\epsilon_i|+c_s\rho\le\sqrt{\T_n}$ for all $1\le i\le n$}\},\\
\mathcal{E}^*_2 &=&\{\textrm{$|g(i/n) - \widehat{g}(i/n)|\le\sqrt{\T_n}$ for all $1\le i\le n$}\},\\
\mathcal{E}^*_3 &=&\{\textrm{$|\mathcal{P}_{\mathcal{L}(\mathbb{I})}(g)(i/n) - \widehat{g}^\star(i/n)|\le\sqrt{\T_n}$ for all $1\le i\le n$}\}.
\end{eqnarray*}
It is easy to show that $P(\mathcal{E}^*_1 \cap \mathcal{E}^*_2 \cap \mathcal{E}^*_3)\rightarrow 1$ as $n\rightarrow \infty$. Let $\mathcal{E}^*=\mathcal{E}^*_1 \cap \mathcal{E}^*_2 \cap \mathcal{E}^*_3$. Thus, we choose $N_\eta'$ s.t. $P(\mathcal{E}^*)\ge1-\eta/3$ if $n\ge N_\eta'$. Define $\omega^\textrm{linear}=(\omega^\textrm{linear}_1, \ldots, \omega^\textrm{linear}_c)^T$, where $\omega^\textrm{linear}_i = \widetilde{z}^\textrm{linear}_i - \widetilde{z}^\textrm{linear}_{i, 0}$. Obviously, $\omega^\textrm{linear}_i = z^\textrm{linear}_i - z^\textrm{linear}_{i, 0}$ under event $\mathcal{E}^*$. Therefore, by (\ref{eq_linear_proof_diff_z}), under event $\mathcal{E}^*$, we have $|\omega^\textrm{linear}_i-f(i/c)|\leq 8C_k(t) + \zeta$. Since $A \le I_c$, we get that
\begin{align}\label{proof_linearity_eq1}
(\omega^\textrm{linear} - \textbf{f})^TA(\omega^\textrm{linear} - \textbf{f})\le\sum_{i=1}^c(\omega^\textrm{linear}_i-f(i/c))^2 \leq 128cC_k(t)^2 + 2c\zeta^2,
\end{align}
where $\textbf{f} = (f(1/c), \ldots, f(c/c))^T$. By  Lemma \ref{power:thm:lemma:4} and (\ref{proof_linearity_eq1}), we get the following lower bound:
\begin{align}\label{proof_linearity_eq2}
(\omega^\textrm{linear})^T A\omega^\textrm{linear}&\ge\frac{1}{2}\textbf{f}^TA\textbf{f}-(\omega^\textrm{linear}-\textbf{f})^TA(\omega^\textrm{linear}-\textbf{f})\nonumber \\
&=\frac{c}{2}\|g-\mathcal{P}_{\mathcal{L}(\mathbb{I})}(g)\|_c^2-\frac{1}{2}\textbf{f}^T(I_c-A)\textbf{f}-(\omega^\textrm{linear}-\textbf{f})^TA(\omega^\textrm{linear}-\textbf{f}) \nonumber\\
&\ge\frac{c}{2}\|g-\mathcal{P}_{\mathcal{L}(\mathbb{I})}(g)\|_c^2-\frac{1}{2}\varrho c(\lambda+c^{-2m})\rho^2-128cC_k(t)^2 -2c\zeta^2.
\end{align}

Using a similar argument in Lemma \ref{power:thm:lemma:2}, and the facts that $Var(z^\textrm{linear}_{i,0}) = Var(z^0_i)(1+o_p(1)) = \tau_k^2(1+o_p(1))$, $C_k(t)^2 \ll \tau_k^2$, as $n, c \rightarrow\infty$, one has that
\begin{align*}
\frac{E\{|(\omega^\textrm{linear})^TAz^\textrm{linear}_0|^2\}}
{(1+(ch^2)^{-1})\tau_k^2\sum_{i=1}^c(|f(i/c)|+8C_k(t) + \zeta)^2}\le8.
\end{align*}
Therefore, there exists $N''$ s.t., when $c\ge N'', P(\mathcal{E}_2)\ge 1-\eta/3$, where event $\mathcal{E}_2$ is defined as
\begin{align}\label{proof_linearity_eq3}
\mathcal{E}_2 = \left\{|(\omega^\textrm{linear})^T A z^\textrm{linear}_0|\le C_\eta'\sqrt{1+(ch^2)^{-1}}\tau_k\sqrt{\sum_{i=1}^c(|f(i/c)|+8C_k(t)+\zeta)^2}\right\},
\end{align}
and $C_\eta' = \sqrt{24/\eta}$.

From Theorem \ref{corollary linear}, it is straightforward to show that
\[
\frac{(z^\textrm{linear}_0)^TAz^\textrm{linear}_0-\textrm{trace}(A)\widehat{\tau}_k^2}{s_c\widehat{\tau}_k^2} = O_p(1).
\]
Thus, there exists $C_\eta''>0$ s.t. $P(\mathcal{E}_3)\ge 1-\eta/3$ for all $c\ge N_\eta'$ and $N''$, where
\[
\mathcal{E}_3=\left\{\bigg|\frac{(z^\textrm{linear}_0)^TAz^\textrm{linear}_0-\textrm{trace}(A)\widehat{\tau}_k^2}{s_c\widehat{\tau}_k^2}\bigg|\le C_\eta''\right\}.
\]
Then $P(\mathcal{E}^*\cap\mathcal{E}_2\cap\mathcal{E}_3)\ge 1-\eta$ for any $c\ge N_\eta'$ and $N''$.

Suppose $g\in S_\rho^m(\mathbb{I})$ satisfies
\begin{align*}
\|g-\mathcal{P}_{\mathcal{L}(\mathbb{I})}(g)\|_c = \sqrt{ c^{-1}\sum_{i=1}^c \big[\frac{c}{2}\int_{\max(i/c-c^{-1}, 0)}^{\min(i/c+c^{-1}, 1)}[g-\mathcal{P}_{\mathcal{L}(\mathbb{I})}(g)](s)ds\big]^2}=\sqrt{\sum_{i=1}^cf^2(i/c)/c}>C_\eta\delta^\star_{\textrm{linear}},
\end{align*}
where
\begin{align} \label{linear:C:eta}
C_\eta=\max\left\{6\varrho\rho^2, \sqrt{1536},(72C_\eta')^2,6(C_\eta''+z_{1-\alpha/2}+1)\right\},
\end{align}
\begin{align*}
\delta^\star_{\textrm{linear}} = \sqrt{c^{-1}\tau_\textrm{linear}^2(1+s_c+(ch^2)^{-1})+\lambda+c^{-2m}+C_k(t)^2+\zeta^2}.
\end{align*}
Then, under event $\mathcal{E}^*\cap\mathcal{E}_2\cap\mathcal{E}_3$, we have
\begin{align}
&\frac{cT_{\textrm{linear},\mu,t,c}-\textrm{trace}(A)\widehat\tau_k^2}{s_c\widehat\tau_k^2}\nonumber \\
=& \frac{(z^\textrm{linear})^TAz^\textrm{linear}-(z^\textrm{linear}_0)^TAz^\textrm{linear}_0}{s_c\widehat{\tau}_k^2}+\frac{(z^\textrm{linear}_0)^TAz^\textrm{linear}_0-\textrm{trace}(A)\widehat{\tau}_k^2}{s_c\widehat{\tau}_k^2}\nonumber\\
\ge&\frac{(z^\textrm{linear})^TAz^\textrm{linear}-(z^\textrm{linear}_0)^TAz^\textrm{linear}_0}{s_c\widehat{\tau}_k^2}-C_\eta''\nonumber\\
=&\frac{(\omega^\textrm{linear})^TA\omega^\textrm{linear}+2(\omega^\textrm{linear})^TAz^\textrm{linear}_0}{s_c\widehat{\tau}_k^2}-C_\eta''\nonumber\\
\ge&\textstyle\frac{\frac{c}{2}\|g-\mathcal{P}_{\mathcal{L}(\mathbb{I})}(g)\|_c^2-\frac{1}{2}\varrho c(\lambda+c^{-2m})\rho^2-128cC_k(t)^2-2c\zeta^2-
2C_\eta'\tau_{k}\sqrt{(1+\frac{1}{ch^2})\sum_{i=1}^c(|f(i/c)|+8C_k(t)+\zeta)^2}}{s_c\widehat{\tau}_k^2}-C_\eta''\nonumber\\
\ge&\textstyle\frac{\frac{c}{2}\|g-\mathcal{P}_{\mathcal{L}(\mathbb{I})}(g)\|_c^2-\frac{1}{2}\varrho c(\lambda+c^{-2m})\rho^2-128cC_k(t)^2-2c\zeta^2-6C_\eta'\sqrt{1+(ch^2)^{-1}}\tau_k\sqrt{c}\|g-\mathcal{P}_{\mathcal{L}(\mathbb{I})}(g)\|_c}
{s_c\widehat{\tau}_k^2}-C_\eta'' \label{linear:thm:point:1}\\
=&\textstyle\frac{\frac{c}{2}\|g-\mathcal{P}_{\mathcal{L}(\mathbb{I})}(g)\|_c^2\left(
1-\frac{\frac{1}{2}\varrho c(\lambda+c^{-2m})\rho^2}{\frac{c}{2}\|g-\mathcal{P}_{\mathcal{L}(\mathbb{I})}(g)\|_c^2}
-\frac{128cC_k(t)^2}{\frac{c}{2}\|g-\mathcal{P}_{\mathcal{L}(\mathbb{I})}(g)\|_c^2}-\frac{2c\zeta^2}{\frac{c}{2}\|g-\mathcal{P}_{\mathcal{L}(\mathbb{I})}(g)\|_c^2}-
\frac{6C_\eta'\sqrt{1+(ch^2)^{-1}}\tau_k\sqrt{c}\|g-\mathcal{P}_{\mathcal{L}(\mathbb{I})}(g)\|_c}{\frac{c}{2}\|g-\mathcal{P}_{\mathcal{L}(\mathbb{I})}(g)\|_c^2}\right)}{s_c\hat{\tau}_k^2}-C_\eta''\nonumber\\
\ge&\frac{\frac{c}{6}\|g-\mathcal{P}_{\mathcal{L}(\mathbb{I})}(g)\|_c^2}{s_c\widehat{\tau}_k^2}-C_\eta''>z_{1-\alpha/2}.\label{linear:thm:point:2}
\end{align}
where (\ref{linear:thm:point:1}) follows from $C_\eta>192$ (see (\ref{linear:C:eta})), i.e.,
\[
\sqrt{\sum_{i=1}^c f(i/c)^2}=\sqrt{c}\|g-\mathcal{P}_{\mathcal{L}(\mathbb{I})}(g)\|_c\ge \sqrt{c}C_\eta\delta^\star_{\textrm{linear}}\ge \sqrt{192c}C_k(t),
\]
\[
\sqrt{\sum_{i=1}^c f(i/c)^2}=\sqrt{c}\|g-\mathcal{P}_{\mathcal{L}(\mathbb{I})}(g)\|_c\ge \sqrt{c}C_\eta\delta^\star_{\textrm{linear}}\ge \sqrt{3c}\zeta,
\]
which leads to
\[
\sum_{i=1}^c(|f(i/c)|+8C_k(t)+\zeta)^2\le 3\sum_{i=1}^cf(i/c)^2+192cC_k(t)^2 +3c\zeta^2\le 9\sum_{i=1}^cf(i/c)^2=9c\|g-\mathcal{P}_{\mathcal{L}(\mathbb{I})}(g)\|_c^2;
\]
and (\ref{linear:thm:point:2}) follows from (\ref{linear:C:eta}), i.e.,
\begin{eqnarray*}
&&\frac{\frac{1}{2}\varrho c(\lambda+c^{-2m})\rho^2}{\frac{c}{2}\|g-\mathcal{P}_{\mathcal{L}(\mathbb{I})}(g)\|_c^2}\le 1/6,\\
&&\frac{128cC_k(t)^2}{\frac{c}{2}\|g-\mathcal{P}_{\mathcal{L}(\mathbb{I})}(g)\|_c^2}\le 1/6,\\
&&\frac{2c\zeta^2}{\frac{c}{2}\|g-\mathcal{P}_{\mathcal{L}(\mathbb{I})}(g)\|_c^2}\le 1/6,\\
&&\frac{6C_\eta'\sqrt{1+(ch^2)^{-1}}\tau_{k}\sqrt{c}\|g-\mathcal{P}_{\mathcal{L}(\mathbb{I})}(g)\|_c}{\frac{c}{2}\|g-\mathcal{P}_{\mathcal{L}(\mathbb{I})}(g)\|_c^2}\le 1/6.
\end{eqnarray*}

Then for any $c\ge N_\eta\equiv\max\{N_\eta',N''\}$, we have
\begin{eqnarray*}
P(\textrm{reject $H_0$}|\textrm{$H_1$ is true})\ge P\left(\textrm{$\mathcal{E}^\star\cap\mathcal{E}_2\cap\mathcal{E}_3$ and $\bigg|\frac{cT_{\textrm{linear},\mu,t,c}-\textrm{trace}(A)\widehat\tau_k^2}{s_c\widehat\tau_k^2}\bigg|
\ge z_{1-\alpha/2}$}\right)\ge 1-\eta.
\end{eqnarray*}
In the end, by direct calculations, we know that $\|g-\mathcal{P}_{\mathcal{L}(\mathbb{I})}(g)\|_c \ge C_\eta\delta^\star_{\textrm{linear}}$ is equivalent as $\|g-\mathcal{P}_{\mathcal{L}(\mathbb{I})}(g)\|_c\ge C_\eta\delta_{n,c,\lambda}^{\textrm{linear}}$. This proves the desired result.
\end{proof}

\begin{proof}\textbf{of Theorem \ref{adaptive:size}:}
Let $\epsilon = (\epsilon_1, \ldots, \epsilon_n)^T\sim N(0, I_n)$ and $\widetilde{y}^0 = (\widetilde{y}_1^0, \ldots, \widetilde{y}_c^0)^T$, where $\widetilde{y}_i^0 = \frac{\sum_{j=(i-1)\widetilde{n}+1}^{i\widetilde{n}}\sigma\epsilon_j}{\widetilde{n}}$ follows a normal distribution. We further define $\varsigma_i = z_i^0 -\widetilde{y}^0_i$ be the difference of $z^0_i$ and $\widetilde{y}^0_i$. Let $\varsigma=(\varsigma_1, \ldots, \varsigma_c)^T$. Consider event
 \[ \mathcal{E}_1=\{\textrm{$\sigma|\epsilon_i|+c_s\rho\le\sqrt{\T_n}$ for all $1\le i\le n$}\}.
 \]
 Then $P(\mathcal{E}_1) \rightarrow 1$ as $n \rightarrow \infty$, and under event $\mathcal{E}_1$, $|\widetilde{y}_i^0 - \frac{1}{\widetilde{n}}\sum_{j=(i-1)\widetilde{n}+1}^{i\widetilde{n}}Q(\sigma\epsilon_j)| \le C_k(t)$. According to (\ref{eq:quant_z0}), one has that $|\varsigma_i| = O_p(C_k(t))$. Note that for any given $m_l \leq m\leq m_u\rightarrow \infty$, the standardized testing statistic
\begin{eqnarray*}
\xi_m &=& \frac{cT_{m}-\textrm{trace}(A_m)\widehat{\tau}_k^2}{s_{c,m}\widehat{\tau}_k^2}\\
&=&\frac{(\varsigma+\widetilde{y}^0)^TA_m(\varsigma+\widetilde{y}^0)-\textrm{trace}(A_m)\widehat{\tau}_k^2}{s_{c,m}\widehat{\tau}_k^2}\\
&=&\frac{\varsigma^TA_{m}\varsigma}{s_{c,m}
\widehat{\tau}_k^2}+\frac{2\varsigma^TA_{m}\widetilde{y}^0}{s_{c,m}\widehat{\tau}_k^2}+\frac{(\widetilde{y}^0)^TA_{m}\widetilde{y}^0-\textrm{trace}(A_m)\widehat{\tau}_k^2}{s_{c,m}\widehat{\tau}_k^2}\\
&=& J_1 + J_2 +J_3.
\end{eqnarray*}
For $J_1$, notice $\varsigma^TA_{m}\varsigma\leq \sum_{i=1}^c\varsigma_i^2=O_p(cC_k(t)^2)$, and $C_k(t)^2 \ll (nh_m^{1/2})^{-1}$ for any $m_l\le m \le m_u$, one has $J_1=o_p(1)$. For $J_2$, using a similar argument as (\ref{eq:boundtmp}), we have
\[
 J_2 =O_p\left(C_k(t)\sqrt{\frac{n}{ch_m}}+C_k(t)\sqrt{nh_m}\right) \rightarrow 0,\,\,\,\, \textrm{ for any } m_l\le m \le m_u.
\]
For $J_3$, we need to use Lemma \ref{adaptive}. Let $\widetilde{A}_m = A_m/s_{c,m}, \boldsymbol{\Psi}_{c} = \widetilde{n}^{-1/2}(\widetilde{y}^0_1, \ldots, \widetilde{y}^0_c)^T$. Define $F_{c, m}:=\boldsymbol{\Psi}_{c}^{\top} \widetilde{A}_{m} \boldsymbol{\Psi}_{c}-E\left[\boldsymbol{\Psi}_{c}^{\top} \widetilde{A}_{m} \boldsymbol{\Psi}_{c}\right]$. Define $Z_{c}=\left(Z_{c, m_{l}}, \ldots, Z_{c, m_{u}}\right)^{\top}$ be an $m_{u}-m_{l}+1$ -dimensional centered Gaussian vector with covariance matrix $\mathfrak{C} = I_{m_{u}-m_{l}+1}$ Next we need to verify the conditions in Lemma \ref{adaptive}.

By direct calculations, we have $E\left[F_{c, m}^{4}\right]-3 E\left[F_{c, m}^{2}\right]^{2} = 48\textrm{trace}(\widetilde{A}^4_m)\asymp\frac{h_{m}}{s^4_{c,m}} \asymp h^3_{m}.$
Then we have $\max _{m_l \leq m \leq m_{u}}\left(E\left[F_{c, m}^{4}\right]-3 E\left[F_{c, m}^{2}\right]^{2}\right) \log ^{6} m_{u} \rightarrow 0$.

On the other hand,
\begin{eqnarray}\label{eq:tmp:adptive}
\max _{m_l \leq m, m' \leq m_{u}}\left|\mathfrak{C}(m, m')-E\left[F_{c, m} F_{c, m'}\right]\right| \leq \sum_{m=m_l}^{m_u}|1-E\left[F_{c, m}^{2}\right]| + 2\sum_{m_l\leq m<m'\leq m_u}|E\left[F_{c, m}F_{c,m'}\right]|.
\end{eqnarray}
For the first term in (\ref{eq:tmp:adptive}), recall that $s_{c,m}^2=2\sum_{1\le i\neq i'\le c}a_{i,i'}^2$  with
$a_{i,i'}$ being the $(i,i')$th entry of $A_m$. Then by Lemma \ref{adaptive_rate}, we have,
\begin{align} \label{adptive eq1}
\sum_{m=m_l}^{m_u}|1-E\left[F_{c, m}^{2}\right]|\log ^{2} m_{u} = \sum_{m=m_l}^{m_u}|1-2\textrm{trace}(\widetilde{A}^2_m)|\log ^{2} m_{u}\asymp \frac{m_u\log^2(m_u)}{ch_{m}} \rightarrow 0.
\end{align}

For the second term in (\ref{eq:tmp:adptive}), we need to find a bound of $|E\left[F_{c, m}F_{c,m'}\right]|$ for $m'>m$. It follows that
\begin{eqnarray*}
E\left[F_{c, m}F_{c,m'}\right] &=& 2\textrm{trace}(A_mA_{m'})/(s_{c,m}s_{c,m'})\\
&\asymp& \frac{1}{\left(h_{m} h_{m'}\right)^{-1 / 2}}\sum_{\nu=1}^{n} \frac{\nu^{-4m}}{(\lambda_m+ \nu^{-2m})^2}\frac{\nu^{-4m'}}{(\lambda_{m'}+ \nu^{-2m'})^2}\\
&=& \frac{1}{\left(h_{m} h_{m'}\right)^{-1 / 2}}\sum_{\nu=1}^{n} \frac{1}{(1+ \lambda_m\nu^{2m})^2(1+\lambda_{m'}\nu^{2m'})^2}\\
&\leq& \frac{1}{\left(h_{m} h_{m'}\right)^{-1 / 2}}\sum_{\nu=1}^{n} \frac{1}{\left(1+\left(h_{m} \nu\right)^{2 m}\right)\left(1+\left(h_{m'} \nu\right)^{2 m'}\right)} \\
&\leq & \frac{1}{\left(h_{m} h_{m'}\right)^{-1 / 2}}\sum_{\nu=1}^{n} \int_{\nu-1}^{\nu} \frac{1}{\left(1+\left(h_{m} x\right)^{2 m}\right)\left(1+\left(h_{m'} x\right)^{2 m'}\right)} d x \\
&=& \frac{1}{\left(h_{m} h_{m'}\right)^{-1 / 2}}\int_{0}^{\infty} \frac{1}{\left(1+\left(h_{m} x\right)^{2 m}\right)\left(1+\left(h_{m'} x\right)^{2 m'}\right)} d x \\
&=&\int_{0}^{\infty} \frac{\sqrt{h_{m} h_{m'}}}{\left(1+\left(h_{m} x\right)^{2 m}\right)\left(1+\left(h_{m'} x\right)^{2 m'}\right)} d x \\
&=&\int_{0}^{\infty} \frac{1}{\left(1+\left(x \sqrt{h_{m} / h_{m'}}\right)^{2 m}\right)\left(m'+\left(x \sqrt{h_{m'} / h_{m}}\right)^{2 m'}\right)} d x \\
&\leq &\int_{0}^{\infty} \frac{1}{1+\left(x \sqrt{h_{m'} / h_{m}}\right)^{2 m'}} d x \\
&=&\left(h_{m'} / h_{m}\right)^{-1 / 2} \int_{0}^{\infty} \frac{1}{1+x^{2 m'}} d x \\
&=&\left(h_{m'} / h_{m}\right)^{-1 / 2}\left(\int_{0}^{1} \frac{1}{1+x^{2 m'}} d x+\int_{1}^{\infty} \frac{1}{1+x^{2 m'}} d x\right) \\
&\leq & C_{0}\left(h_{m'} / h_{m}\right)^{-1 / 2}. \\
\end{eqnarray*}

Therefore, using Lemma \ref{adaptive_rate}, we have
\[2\sum_{m_l\leq m<m'\leq m_u}|E\left[F_{c, m}F_{c,m'}\right]|\log^2(m_u)\lesssim \left(h_{m'} / h_{m}\right)^{-1 / 2}m_u^2\log^2(m_u)\rightarrow 0.
\]
Together with (\ref{adptive eq1}), we have
$\max _{m_l \leq m, m' \leq m_{u}}\left|\mathfrak{C}(m, m')-E\left[F_{c, m} F_{c, m'}\right]\right| \log ^{2} m_{u} \rightarrow 0$.

Therefore, by Lemma \ref{adaptive}, we have
$$
\sup _{x \in \mathbb{R}}\left|P\left(C_n(\max _{m_l \leq m_* \leq m_{u}}\xi_m -C_n)\leq x\right)-P\left(C_n(\max _{m_l \leq m\leq m_{u}} Z_{c, m} -C_n) \leq x\right)\right| \rightarrow 0, \,\,\textrm{as $c\rightarrow\infty$.}
$$

By \cite{extreme}, we know $C_n(\max _{m_l \leq m\leq m_{u}} Z_{c, m} -C_n)$ follows an extreme value distribution. Proof is complete.
\end{proof}

\begin{proof}\textbf{of Theorem \ref{adaptive:power}:}
The proof of Theorem \ref{adaptive:power} is similar to Theorem \ref{power:thm} and Theorem \ref{power:linearity}. We use the same notations as in the proof of Theorem \ref{power:thm}. Suppose $g\in S_\rho^{m_*}(\mathbb{I})$ is the function which generates the samples and $f$ is the corresponding integral function as defined in (\ref{eq:f0}). We consider the following three events as defined in the proof of Theorem \ref{power:thm}.
\begin{align*}
\mathcal{E}_1&=\{\textrm{$\sigma|\epsilon_i|+c_s\rho\le\sqrt{\T_n}$ for all $1\le i\le n$}\},\\
\mathcal{E}_{2}&=\left\{|\omega^T A_{m_*} z^0|\le C_\eta'\sqrt{1+(ch_{m_*}^2)^{-1}}\tau_k\sqrt{\sum_{i=1}^c(|f(i/c)|+4C_k(t)+\zeta)^2}\right\}, \,\,\, C_\eta'=\sqrt{24/\eta},\\
\mathcal{E}_3&=\left\{\bigg|\frac{(z^0)^TA_{m_*}z^0-\textrm{trace}(A_{m_*})\widehat{\tau}_k^2}{s_{c, m_*}\widehat{\tau}_k^2}\bigg|\le C_\eta''\right\}.
\end{align*}
Since $P(\mathcal{E}_1) \rightarrow 1$ as $c\rightarrow \infty$, there exist $N_\eta' > 0$, such that $P(\mathcal{E}_1) \geq 1-\eta/3$ for all $c\geq N_\eta'$. Follows from Lemma \ref{power:thm:lemma:2} that there exists $N''$ s.t., when $c\ge N''$, $P(\mathcal{E}_2) \geq 1-\eta/3$. Furthermore, using Theorem \ref{asymp:distribution}, there exists $C_\eta''>0$ s.t. $P(\mathcal{E}_3)\ge 1-\eta/3$ for all $c\ge \max\{N_\eta', N''\}$.

Suppose $g\in S_\rho^{m_*}(\mathbb{I})$ satisfies $\|g\|_c\ge C_\eta\delta_*$,
where
\begin{align*}
C_\eta&=\max\left\{6\varrho\rho^2,384,(72C_\eta')^2,6(C_\eta''+z_{1-\alpha/2}+1)\right\},\\
\delta_* &= \sqrt{c^{-1}\tau_k^2(1+s_{c, m_*}\log^{1/2}(m_u)+(ch_{m_*}^2)^{-1})+\lambda+c^{-2m_*}+C_k(t)^2+\zeta^2}, \\
\tau_k^2 &= Var(z_1|H_0) = Var(z_i^0)=O(\widetilde{n}^{-1}), \,\,\, \zeta= \max\limits_{i = 1,\ldots,c}\big|f(i/c) - \frac{1}{\widetilde{n}}\sum_{j=(i-1)\widetilde{n}+1}^{i\widetilde{n}}g(j/n)\big|=O(n^{-1}).
\end{align*}

Since $m_u \rightarrow\infty, m_* \leq m_u$ eventually. So we assume $m_*\leq m_u$. Then it holds that
\begin{eqnarray*}
\inf_{\substack{g\in S_\rho^{m_*}(\mathbb{I})\\
\|g\|_c\ge C_\eta\delta_*}} P(\xi_*>q_\alpha) &=& \inf_{\substack{g\in S_\rho^{m_*}(\mathbb{I})\\
\|g\|_c\ge C_\eta\delta_*}} P(\xi_{\textrm{max}}>C_n + q_\alpha/C_n)\\
&\geq& \inf_{\substack{g\in S_\rho^{m_*}(\mathbb{I})\\
\|g\|_c\ge C_\eta\delta_*}} P(\xi_{m_*}>C_n + q_\alpha/C_n)\\
&=& \inf_{\substack{g\in S_\rho^{m_*}(\mathbb{I})\\
\|g\|_c\ge C_\eta\delta_*}} P(\frac{cT_{m_*}-\textrm{trace}(A_{m_*})\widehat{\tau}_k^2}{s_{c,m_*}\widehat{\tau}_k^2}>C_n + q_\alpha/C_n).
\end{eqnarray*}

Similar to the proof of Theorem \ref{power:thm}, we know with probability approaching one
\begin{eqnarray*}
&&\frac{cT_{m_*}-\textrm{trace}(A_{m_*})\hat{\tau}_k^2}{s_{c,m_*}\hat{\tau}_k^2}\nonumber\\
&=&\frac{z^TA_{m_*}z-(z^0)^TA_{m_*}z^0}{s_{c,m_*}\hat{\tau}_k^2}+\frac{(z^0)^TA_{m_*}z^0-\textrm{trace}(A_{m_*})\hat{\tau}_k^2}{s_{c,m_*}\hat{\tau}_k^2}\nonumber\\
&=&\frac{z^TA_{m_*}z-(z^0)^TA_{m_*}z^0}{s_{c,m_*}\hat{\tau}_k^2}+O_p(1)\nonumber\\
&=&\frac{\omega^TA_{m_*}\omega+2\omega^TA_{m_*}z^0}{s_c\hat{\tau}_k^2}+O_P(1)\nonumber\\
&\geq&\frac{\frac{c}{2}\|g\|_c^2\left(
1-\frac{\frac{1}{2}c_{m_*}c(\lambda_{m_*}+c^{-2m_*})\rho^2}{\frac{c}{2}\|g\|_c^2}
-\frac{32cC_k(t)^2}{\frac{c}{2}\|g\|_c^2}-\frac{2c\zeta^2}{\frac{c}{2}\|g\|_c^2}-
\frac{6C_\eta'\sqrt{1+(ch_{m_*}^2)^{-1}}\tau_k\sqrt{c}\|g\|_c}{\frac{c}{2}\|g\|_c^2}\right)}{s_{c,m_*}\hat{\tau}_k^2}+O_p(1).
\end{eqnarray*}
Since $ C_n \asymp (\log(m_u))^{1/2}$
we have
\[
\frac{cT_{m_*}-\textrm{trace}(A_{m_*})\hat{\tau}_k^2}{s_{c,m_*}\hat{\tau}_k^2}\ge C_n + q_\alpha/C_n.
\]
Therefore, for any $c\ge N_\eta\equiv\max\{N_\eta',N''\}$, we have
\begin{eqnarray*}
P(\textrm{reject $H_0$}|\textrm{$H_1$ is true})\ge P\left(\textrm{$\mathcal{E}_1\cap\mathcal{E}_2\cap\mathcal{E}_3$ and $\bigg|\frac{cT_{m_*}-\textrm{trace}(A_{m_*})\hat{\tau}_k^2}{s_{c,m_*}\hat{\tau}_k^2}\bigg|
\ge C_n + q_\alpha/C_n$}\right)\ge 1-\eta.
\end{eqnarray*}

In the end, by direct calculations, we know that $\|g\|_c\ge C_\eta\delta_*$ is equivalent as $\|g\|_c\ge C_\eta\delta_{n, c, a_n}$. This proves the desired result.
\end{proof}

\end{document}